\numberwithin{equation}{section}
\theoremstyle{plain}
\newtheorem{theorem}{Theorem}[section]
\newtheorem{corollary}[theorem]{Corollary}
\newtheorem{lemma}[theorem]{Lemma}
\newtheorem{proposition}[theorem]{Proposition}
\newtheorem{conjecture}[theorem]{Conjecture}
\theoremstyle{definition}
\newtheorem{remark}[theorem]{Remark}
\theoremstyle{remark}
\newcommand{\OO}{\mathcal O}
\newcommand{\A}{\mathbb{A}}
\newcommand{\R}{\mathbb{R}}
\newcommand{\G}{\mathbb{G}}
\newcommand{\Q}{\mathbb{Q}}
\newcommand{\Z}{\mathbb{Z}}
\newcommand{\C}{\mathbb{C}}
\renewcommand{\H}{\mathbb{H}}
\newcommand{\D}{\mathbb{D}}
\newcommand{\zxz}[4]{\begin{pmatrix} #1 & #2 \\ #3 & #4 \end{pmatrix}}
\newcommand{\abcd}{\zxz{a}{b}{c}{d}}
\newcommand{\kzxz}[4]{\left(\begin{smallmatrix} #1 & #2 \\ #3 & #4\end{smallmatrix}\right) }
\newcommand{\calF}{\mathcal{F}}
\newcommand{\eps}{\varepsilon}
\newcommand{\norm}{\operatorname{N}}
\newcommand{\tr}{\operatorname{tr}}
\newcommand{\Cl}{\operatorname{Cl}}
\newcommand{\Pet}{\text{\rm Pet}}
\newcommand{\Gr}{\operatorname{Gr}}
\newcommand{\GL}{\operatorname{GL}}
\newcommand{\SO}{\operatorname{SO}}
\newcommand{\Res}{\operatorname{Res}}
\newcommand{\Gal}{\operatorname{Gal}}
\newcommand{\cha}{\operatorname{{Char}}}
\newcommand{\diag}{\operatorname{diag}}
\newcommand{\ord}{\operatorname{ord}}
\newcommand{\Gspin}{\operatorname{GSpin}}
\newcommand{\ff}{\hbox{if }}
\newcommand{\SL}{\operatorname{SL}}
\newcommand{\Diff}{\operatorname{Diff}}
\newcommand{\kk}{\operatorname{E}}
\newcommand{\ph}{\phi}
\newcommand{\Ind}{\operatorname{Ind}}
\newcommand{\IM}{\operatorname{IM}}
\begin{document}

\title[Difference of modular functions and their CM value factorization
]{Difference of modular functions and their CM value factorization \\ to appear in Trans.  AMS}

\author[ Tonghai Yang and Hongbo Yin]{Tonghai Yang and Hongbo Yin}

\address{Department of Mathematics, University of Wisconsin Madison, Van Vleck Hall, Madison, WI 53706, USA}
\email{thyang@math.wisc.edu}
\address{Academy of Mathematics and Systems Science, Morningside center of Mathematics, Chinese Academy of Sciences, Beijing 100190}
\email{yinhb@math.ac.cn}

\subjclass[2000]{14G35, 14G40, 11G18, 11F27}

\thanks{The first author is partially supported by a NSF grant DMS-1500743.}


\begin{abstract} In this paper, we use Borcherds lifting and  the big CM value formula of Bruinier, Kudla, and Yang to give an explicit factorization formula for the norm of $\Psi(\frac{d_1+\sqrt{d_1}}2) -\Psi(\frac{d_2+\sqrt{d_2}}2)$, where $\Psi$ is the $j$-invariant or  the Weber invariant $\omega_2$.  The $j$-invariant case gives another proof of the well-known Gross-Zagier factorization formula of singular moduli,  while the Weber invariant case gives a proof of the Yui-Zagier conjecture for $\omega_2$.  The method used here could be extended to deal with other modular functions on a genus zero modular curve.
\end{abstract}

\maketitle

\setcounter{tocdepth}{1}
\tableofcontents

\section{Introduction} \label{intro}

In 1980s, Gross and Zagier discovered a beautiful factorization formula for the singular moduli  \cite{GZSingular} in preparation to their  well-known Gross-Zagier formula. It was extended slightly by Dorman \cite{Dorman}  which can be stated as follows (see Remark \ref{rem:GZformula}).

\begin{theorem}\label{theo1.1} (Gross-Zagier, Dorman) \label{theo:GrossZagier} Let $E_i=\Q(\sqrt d_i)$ be two imaginary quadratic fields of fundamental discriminants $d_i$ with $(d_1, d_2) =1$, let $F=\Q(\sqrt D)$ with $D=d_1d_2$ and $E=\Q(\sqrt{d_1}, \sqrt{d_2})$. Let $j(\tau)$ be the well-known $j$-invariant. Then
$$
\sum_{[\mathfrak a_i] \in \Cl(E_i)} \log|j(\tau_{\mathfrak a_1} ) -j(\tau_{\mathfrak a_2})|^{\frac{8}{w_1 w_2}}
 =\sum_{\substack{t =\frac{m+\sqrt D}2 \in \OO_F \\ |m|< \sqrt D} }
   \sum_{\mathfrak p \hbox{ inert in } E/F} \frac{1+\ord_\mathfrak p(t\OO_F)}2  \rho(t \mathfrak p^{-1} ) \log (\norm(\mathfrak p)).
$$
Here $w_i$ is the number of roots of unity in $E_i$, and  for an integral ideal $\mathfrak a$ of $F$
$$
\rho(\mathfrak a) = |\{\mathfrak A \subset \OO_E: \norm_{E/F} (\mathfrak A) =\mathfrak a\}|.
$$
Finally, for an integral ideal $\mathfrak a_i$ of $E_i$ with
$$
\mathfrak a_i = \Z a_i + \Z \frac{b_i+\sqrt{d_i}}2, \quad a_i =\norm(\mathfrak a_i),
$$
its associated CM point is $\tau_{\mathfrak a_i} = \frac{b_i +\sqrt{d_i}}{2a_i}$.
\end{theorem}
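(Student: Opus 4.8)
The plan is to obtain Theorem~\ref{theo:GrossZagier} as a special case of the main factorization theorem of this paper, which computes $\log$ of the norm of $\Psi(\frac{d_1+\sqrt{d_1}}2)-\Psi(\frac{d_2+\sqrt{d_2}}2)$ for a modular function $\Psi$ on a genus-zero modular curve, and then to specialize to $\Psi = j$. The strategy has three stages: realize the difference of CM values of $j$ as a Borcherds product evaluated at a big CM cycle; invoke the Bruinier--Kudla--Yang big CM value formula to express the archimedean height in terms of a derivative of an incoherent Eisenstein series; and finally unfold that derivative into the arithmetic local terms on the right-hand side.

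First I would recall that $j(z_1)-j(z_2)$, viewed as a meromorphic function on $X(1)\times X(1)$, is (up to normalization) the Borcherds lift of a weakly holomorphic modular form $f$ of weight $0$ for an appropriate even lattice $L$ of signature $(2,2)$, whose divisor is the Hirzebruch--Zagier-type divisor $Z(1)$ (the graph of the modular correspondence of discriminant $1$, i.e. the diagonal). Concretely one takes $L = M_2(\Z)$ with the determinant quadratic form; then $\SO(L)\cong (\SL_2\times\SL_2)/\pm$ acts on $\h\times\h$, the special divisors $Z(m)$ are the modular correspondences $T_m$, and the function $j(z_1)-j(z_2)$ is essentially $\Psi_f$ for $f = q^{-1}+O(1)$ the unique such form. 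The CM point $\frac{d_i+\sqrt{d_i}}2$ on each factor cuts out a \emph{big CM cycle} associated to the quartic CM field $E=\Q(\sqrt{d_1},\sqrt{d_2})$, with reflex data governed by $F=\Q(\sqrt{D})$, $D=d_1d_2$; this is exactly the setup in which the big CM value formula applies.

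The core step is to apply the big CM value formula of Bruinier--Kudla--Yang: the (logarithm of the) normalized CM value $\sum_{[\mathfrak a_i]}\log|\Psi_f(\tau_{\mathfrak a_1},\tau_{\mathfrak a_2})|$ equals, up to an explicit elementary constant, the central derivative $\mathcal{L}'(0,\xi)$ of an incoherent Eisenstein series on the group associated to $F$, paired with the principal part coefficients of $f$ — here only the $q^{-1}$-coefficient contributes, selecting the derivative of the $m=1$ Fourier coefficient. I would then compute this Eisenstein coefficient explicitly: it is a product of local Whittaker functions over the places of $F$, all but one of which are nonvanishing holomorphic (coherent) while exactly one (the "incoherent" place) vanishes, so the derivative is a sum over places $\mathfrak p$ of $F$ that are inert (or ramified) in $E/F$ of a local derivative times $\log\norm(\mathfrak p)$. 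A direct local computation of these Whittaker functions — genus-character $L$-function local factors of the form $\frac{1+\ord_{\mathfrak p}(t\OO_F)}2$ and counting terms $\rho(t\mathfrak p^{-1})$ — reorganizes the sum into $\sum_{t}\sum_{\mathfrak p}\frac{1+\ord_{\mathfrak p}(t\OO_F)}2\rho(t\mathfrak p^{-1})\log\norm(\mathfrak p)$, which is the claimed right-hand side. Matching the normalization $\frac{8}{w_1w_2}$ on the left comes from the relation between the Borcherds product's leading coefficient, the orders of the unit groups, and the choice of additive vs.\ multiplicative normalization of $\Psi_f$.

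The main obstacle, and where the real work lies, is the \emph{local computation of the derivatives of the Whittaker functions} at the ramified/inert places and the bookkeeping that turns them into the divisor-counting function $\rho$ and the factor $\frac{1+\ord_{\mathfrak p}(t\OO_F)}2$. One must carefully handle places of $F$ above $2$ (since $d_i$ are fundamental discriminants, not necessarily odd) and above primes ramifying in $E_i/\Q$, track which local Eisenstein factor is incoherent, and confirm that primes split in $E/F$ contribute nothing while the archimedean place's contribution is absorbed into the constant. A secondary technical point is justifying that $j(z_1)-j(z_2)$ really is the Borcherds lift with the correct divisor and no extraneous factor — i.e.\ pinning down the constant in $j(z_1)-j(z_2) = c\cdot\Psi_f(z_1,z_2)$, which I would do by comparing $q$-expansions at the cusp. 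Once these local factors are in hand, assembling the global formula and reading off Theorem~\ref{theo:GrossZagier} (and Remark~\ref{rem:GZformula} relating it to the original Gross--Zagier--Dorman statement) is formal.
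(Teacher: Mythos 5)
Your proposal is correct and follows essentially the same route as the paper: realize $j(z_1)-j(z_2)$ as the Borcherds lift of $j-744$ on $(M_2(\Z),\det)$, identify the pair of Heegner points with the big CM cycle for $E=\Q(\sqrt{d_1},\sqrt{d_2})$, apply the Bruinier--Kudla--Yang big CM value formula, and evaluate the $t$-th coefficients of the incoherent Eisenstein series via local Whittaker functions and the Diff-set analysis, exactly as in Section \ref{GrossZagier}. The only small imprecision is the source of the exponent $\frac{8}{w_1w_2}$: in the paper it comes from the constant $C(W,K)=\deg Z(W,z_{\sigma_2}^\pm)/\Lambda(0,\chi)=\frac{w_1w_2}{2}$ (class number formula), not from the leading coefficient of the Borcherds product.
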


This gives a beautiful factorization formula for $\norm(j(\frac{d_1+\sqrt{d_1}}2) -j(\frac{d_2+\sqrt{d_2}}2))$ (up to sign). In particular, the biggest prime factor of this norm is less than or equal to $D/4$, extremely small comparing to the norm. The first few examples of this phenomenon were discovered by Berwick in 1920s \cite{Berwick27}. For example, one has
\begin{align*}
j(\frac{1+  \sqrt{-163}}2)-j(\frac{1+\sqrt{-3}}2))
   &=-2^{18} 3^3 5^3 23^3 29^3=-262537412640768000,
\\
j(\frac{1+  \sqrt{-163}}2)-j(i) &=-2^6 3^6 7^2 11^2 19^2 127^2 163=-262537412640769728.
\end{align*}

In  1997, Yui and Zagier  \cite{YuiZagier} defined a  mysterious CM value $\bold f(\frac{d+\sqrt d}2)$ via the three Weber functions of level $48$
(when $d \equiv 1 \pmod 8$ and $3 \nmid d$) and proved that it is defined over the Hilbert class field of $\Q(\sqrt d)$, and claimed that its Galois conjugates are the CM values at other CM points of the same discriminant  $d$ with some modifications, which was later proved by Alice Gee using Shimura's reciprocity law. In addition,  Yui and Zagier  gave a conjectural factorization formula for the norm of $\bold f(\frac{d_1+\sqrt{d_1}}2)^a - \bold f(\frac{d_2+\sqrt{d_2}}2)^a$ similar to the Gross-Zagier factorization formula for any positive integer $a|24$.  For example, when $a=24$, the  conjecture can be restated as follows.

\begin{conjecture}\label{conj:YuiZagier} Let the notation be as in Theorem  \ref{theo1.1}, and assumer further $d_1 \equiv d_2 \equiv 1 \pmod 8$. Let
$$
\omega_2(\tau)=2^{12} q\prod_{n>0}(1+q^{n})^{24}=2^{12}\cdot \frac{\Delta(2\tau)}{\Delta(\tau)}
$$
be the Weber  modular function for $\Gamma_0(2)$. Then
$$
\sum_{[\mathfrak a_i|\in \Cl(E_i)} \log|\omega_2(\tau_{\mathfrak a_1})-\omega_2(\tau_{\mathfrak a_2})|^2
 =\sum_{\substack{t =\frac{m+\sqrt D}2\\ |m|< \sqrt D, \hbox{\tiny odd} \\ m^2 \equiv D \pmod {16}} }
   \sum_{\mathfrak p \hbox{ inert in } E/F} \frac{1+\ord_\mathfrak p(t\OO_F)}2  \rho(t \mathfrak p^{-1}  \mathfrak p_t^{-2}) \log (\norm(\mathfrak p)).
$$
Here $\mathfrak p_t$ is the unique prime ideal of $F$ above $2$ such that $\ord_{\mathfrak p_t}(t\OO_F) \ge 1$, and  for each ideal class $[\mathfrak a_i] \in \Cl(E_i)$, we choose a representative $\mathfrak a_i$ integral with norm prime to $2$,  i.e.,
$$
\mathfrak a_i=\Z a_i + \Z \frac{b_i+ \sqrt{d_i}}2, \quad \hbox{ with }  2 \nmid a_i,  \quad a_i >0. \quad \tau_{\mathfrak a_i} = \frac{b_i+ \sqrt{d_i}}{2a_i}.
$$

\end{conjecture}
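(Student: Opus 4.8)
The plan is to prove Conjecture~\ref{conj:YuiZagier} along the same route that, elsewhere in this paper, reproves Theorem~\ref{theo1.1}: realize the left-hand side as the value at a \emph{big} CM cycle of a regularized theta lift on an orthogonal Shimura variety of signature $(2,2)$, and then evaluate that value by the big CM value formula of Bruinier, Kudla and Yang, whose arithmetic content is the central derivative of an incoherent Hilbert Eisenstein series over $F=\Q(\sqrt D)$. Since $\SO(2,2)$ is isogenous to $\SL_2\times\SL_2$, the relevant Shimura variety is a product of two modular curves; for $j$ one uses the maximal lattice of signature $(2,2)$, whose variety is $X(1)\times X(1)$ and on which $j(z_1)-j(z_2)$ is, up to a constant, the Borcherds lift of the weight-$0$ form $q^{-1}+O(q)$. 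For $\omega_2$ one instead fixes a non-maximal, Eichler-type lattice $L$ of signature $(2,2)$ and level $2$, whose attached variety is $X_0(2)\times X_0(2)$; as $X_0(2)$ has genus $0$ with $\omega_2$ a Hauptmodul having a simple zero at $\infty$ and a simple pole at $0$, the divisor of $\omega_2(z_1)-\omega_2(z_2)$ is the special divisor $Z(1)$ minus cuspidal components, and the first step is to identify $\omega_2(z_1)-\omega_2(z_2)$, up to an explicit nonzero constant $c_0$, with the Borcherds product $B(f)$ of an explicit weakly holomorphic vector-valued form $f$ of weight $0$ for the Weil representation of $L$, whose principal part is a single $q^{-1}$ supported in the components attached to the cusp $\infty$; equivalently $-\tfrac12\log\|B(f)\|^2=\Phi(\,\cdot\,,f)$.

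The second step is the big CM cycle. Put $D=d_1d_2$, $F=\Q(\sqrt D)$ (in which $2$ splits since $D\equiv1\pmod 8$), and $E=\Q(\sqrt{d_1},\sqrt{d_2})$, a CM quadratic extension of $F$. Then $V:=E$ with the quadratic form $Q(x)=\alpha\,\tr_{F/\Q}(x\bar x)$, for a suitable $\alpha\in F^\times$ with $\norm_{F/\Q}(\alpha)<0$, is a $\Q$-quadratic space with $\Res_{F/\Q}V$ of signature $(2,2)$, carrying a natural action of the rank-one torus coming from the norm-one elements of $E^\times$ over $F$. One must identify the associated CM $0$-cycle $Z(E)\subset X_0(2)\times X_0(2)$, using that $d_i\equiv1\pmod 8$ makes $2$ split in each $E_i$ (so that the relevant CM points genuinely lie on $X_0(2)$), with the set of pairs $(\tau_{\mathfrak{a}_1},\tau_{\mathfrak{a}_2})$ for $[\mathfrak{a}_i]$ over $\Cl(E_i)$ and $\mathfrak{a}_i$ the odd-norm representatives of the statement; this is also where the prime $\mathfrak{p}_t$ of $F$ above $2$ appears. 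The archimedean and $2$-adic volume factors attached to $Z(E)$ must be recorded as well.

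One then applies the big CM value formula: $\Phi(Z(E),f)$ equals, up to the ``degree'' term $\deg Z(E)\cdot(\text{constant term of }f)$, the constant term of $\langle f^{+},\mathcal E'(\tau,s_0)\rangle$, where $\mathcal E$ is the incoherent Hilbert Eisenstein series over $F$ attached to the same local data and $\mathcal E'$ its central derivative. Unfolding gives $\sum_t a(t)$ over $t=\tfrac{m+\sqrt D}2$; the archimedean Whittaker factors restrict $t$ to $|m|<\sqrt D$ and impose the correct sign condition, while the $2$-adic Whittaker function of the non-maximal lattice $L$ forces $m$ odd and $m^2\equiv D\pmod{16}$. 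For each such $t$, incoherence leaves exactly one bad finite place, necessarily a prime $\mathfrak{p}$ inert in $E/F$ with $\ord_{\mathfrak{p}}(t\OO_F)$ odd, where the derivative contributes $\tfrac{1+\ord_{\mathfrak{p}}(t\OO_F)}2\log\norm(\mathfrak{p})$; the product of the remaining local densities — including the $2$-adic one for the non-maximal lattice — equals $\rho(t\mathfrak{p}^{-1}\mathfrak{p}_t^{-2})$, the extra factor $\mathfrak{p}_t^{-2}$ being exactly the discrepancy between the non-maximal local lattice at $2$ and the maximal one used in the $j$-case. This reproduces the right-hand side. To finish, feed in Step~1: $\log|c_0|$ and the Weyl-vector/constant-term contributions of $B(f)$ at the cusps are rational multiples of $\log 2$ that cancel against the degree term, so $\Phi(Z(E),f)$ is $\sum\log|\omega_2(\tau_{\mathfrak{a}_1})-\omega_2(\tau_{\mathfrak{a}_2})|$ up to the normalizing exponent; that exponent is $\tfrac{8}{w_1w_2}=2$, since $d_i\equiv1\pmod 8$ forces $w_i=2$.

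The main obstacle is everything at the prime $2$. Away from $2$ and the archimedean places the computation is identical to the one behind Theorem~\ref{theo1.1}, so the real content is: (i) pinning down the non-maximal signature-$(2,2)$ lattice $L$, and the component structure of $f$, so that $B(f)$ is exactly $\omega_2(z_1)-\omega_2(z_2)$ and not some other modular unit on $\Gamma_0(2)$; and (ii) computing the ramified, non-maximal $2$-adic Whittaker function and local density precisely enough to see the Gross--Zagier factor $\rho(t\mathfrak{p}^{-1})$ corrected to $\rho(t\mathfrak{p}^{-1}\mathfrak{p}_t^{-2})$ and the summation conditions acquire ``$m$ odd, $m^2\equiv D\pmod{16}$''. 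Controlling $c_0$ and the cuspidal contributions of the Borcherds product in the last step is a secondary but genuine bookkeeping issue.
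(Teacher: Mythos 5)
Your proposal follows essentially the same route as the paper: realize $\omega_2(z_1)-\omega_2(z_2)$ as a Borcherds lift on $X_0(2)\times X_0(2)$ attached to the level-$2$ lattice $\kzxz{\Z}{\Z}{2\Z}{\Z}$, identify the big CM cycle with pairs of odd-norm Heegner points via $\Cl(E_1)\times\Cl(E_2)$ (using $d_i\equiv 1\pmod 8$), and extract the right-hand side from the $2$-adic Whittaker functions in the Bruinier--Kudla--Yang formula. The points you flag as the real work are exactly what the paper supplies: the explicit input form $f$ with principal part $q^{-1}\phi_{\mu_0}$ plus constant term $24\phi_{\mu_2}$ (whose contribution $a_0(\tilde\phi)$ is shown to vanish), and the $2$-adic computation, where the key wrinkle is that $\phi_2$ is not factorizable over the primes of $F$ above $2$ and must be split as a sum of two factorizable pieces before the local Whittaker functions yield the conditions $m$ odd, $m^2\equiv D\pmod{16}$ and the corrected density $\rho(t\mathfrak p^{-1}\mathfrak p_t^{-2})$.
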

They  provided some numerical evidence in their paper. Notice that the biggest prime factor of this norm is less than or equal to $D/16$.  In  this paper, we will prove  this conjecture.

\begin{theorem}  \label{theo1.3} Conjecture \ref{conj:YuiZagier} is true.

\end{theorem}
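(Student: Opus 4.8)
The plan is to run the Borcherds-lift / CM-value machine of Bruinier, Kudla and Yang on the modular curve $X_0(2)$, regarded as the orthogonal Shimura variety $X_L$ attached to a suitable even lattice $L$ of signature $(1,2)$. One first chooses $L$ with discriminant group $\Z/4\Z$ (and the quadratic form for which $X_L=\Gamma_0(2)\backslash\h$), so that $\omega_2$ is a Hauptmodul and the Heegner divisors of $X_L$ are the usual CM divisors on $X_0(2)$. The hypothesis $d_i\equiv 1\pmod 8$ makes $2$ split in $E_i$, so the CM points $\tau_{\mathfrak a_i}$ with $2\nmid a_i$ exist, and $\sum_{[\mathfrak a_i]\in\Cl(E_i)}(\tau_{\mathfrak a_i})$ is (a rational multiple of) a Heegner $0$-cycle $Z(n_i,\mu_i)$ on $X_0(2)$, with $(n_i,\mu_i)$ read off from $d_i$ and the $2$-adic type of $L$. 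One then evaluates a Borcherds Green function attached to $Z(n_2,\mu_2)$ along the CM cycle $Z(n_1,\mu_1)$, and reads off both sides.

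\textbf{The Borcherds side.} Since $\omega_2$ is a Hauptmodul, $\operatorname{div}\bigl(\omega_2(\cdot)-\omega_2(\tau_2)\bigr)=(\tau_2)-(\infty)$ on $X_0(2)$. I would produce the weakly holomorphic input $f=f_{n_2,\mu_2}$ of weight $\tfrac12$ for the Weil representation $\rho_L$ with prescribed principal part $q^{-n_2}(\mathfrak e_{\mu_2}+\mathfrak e_{-\mu_2})$, whose Borcherds lift is
\[
\Psi(z,f)=C\,\eta(z)^{a}\,\eta(2z)^{b}\prod_{[\mathfrak a_2]\in\Cl(E_2)}\bigl(\omega_2(z)-\omega_2(\tau_{\mathfrak a_2})\bigr),
\]
where the $\eta$-quotient absorbs the weight and the behaviour at the two cusps, and $C$ is an explicit power of $2$, pinned down by comparing leading $q$-coefficients (this is where the $2^{12}$ in $\omega_2=2^{12}q\prod(1+q^n)^{24}$ enters). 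Taking $-\log$ of the Petersson norm and invoking Borcherds' theorem relating $\log\|\Psi(z,f)\|_{\Pet}^2$ to the automorphic Green function $\Phi_{n_2,\mu_2}(z)$, this becomes
\[
\sum_{[\mathfrak a_2]}\log\bigl|\omega_2(z)-\omega_2(\tau_{\mathfrak a_2})\bigr|^2 = c\,\Phi_{n_2,\mu_2}(z)+g(z),
\]
for an explicit constant $c$ and an explicit elementary function $g(z)$ (a combination of $\log|\eta(z)|$, $\log|\eta(2z)|$, $\log\operatorname{Im}z$ and a constant).

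\textbf{The CM side.} Specialising this identity at $Z(n_1,\mu_1)$, i.e.\ summing over $[\mathfrak a_1]$, the left side becomes the quantity we want and the right side becomes $c\,\Phi_{n_2,\mu_2}(Z(n_1,\mu_1))+\sum_{[\mathfrak a_1]}g(\tau_{\mathfrak a_1})$; the latter sum is evaluated by the Chowla--Selberg / Lerch formula in terms of $L'(0,\chi_{d_1})$-type quantities. For the main term, the big CM value formula of Bruinier--Kudla--Yang expresses $\Phi_{n_2,\mu_2}(Z(n_1,\mu_1))$, up to explicit elementary terms, as a constant times $\mathcal E'(0)$ --- the derivative at $s=0$ of an incoherent Hilbert--Eisenstein series of parallel weight $1$ over $F=\Q(\sqrt D)$ attached to the quadratic CM extension $E/F$, $E=\Q(\sqrt{d_1},\sqrt{d_2})$. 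Being incoherent, $\mathcal E$ vanishes at $s=0$, and the derivative of its $t$-th Fourier coefficient ($t=\tfrac{m+\sqrt D}2$) is a sum of local terms supported at the places where the corresponding local quadratic space is anisotropic --- exactly the finite primes $\mathfrak p$ of $F$ inert in $E/F$; it is nonzero only when there is a single such $\mathfrak p$, where it equals $\rho\bigl(t\mathfrak p^{-1}\mathfrak p_t^{-2}\bigr)$ (the complementary coherent coefficient, via the ideal-counting function $\rho$) times the Whittaker-derivative factor $\tfrac{1+\ord_\mathfrak p(t\OO_F)}2\log\norm(\mathfrak p)$. The archimedean place and the split finite primes feed only into the constant / non-holomorphic part of $\mathcal E'(0)$, which must cancel against $\sum_{[\mathfrak a_1]}g(\tau_{\mathfrak a_1})$ and the constants from the Borcherds side --- the familiar Kronecker-limit-formula cancellation. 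What survives is precisely the right side of Conjecture \ref{conj:YuiZagier}; note that $d_i\equiv 1\pmod 8$ forces $w_i=2$, so the exponent $2$ there is the specialisation of $\tfrac{8}{w_1w_2}$.

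\textbf{Main obstacle.} The serious work is dyadic. One must pin down the $2$-adic Jordan decomposition of $L$ so that $X_L$ is really $X_0(2)$ and $Z(n_i,\mu_i)$ is really the $\omega_2$-CM cycle with $2\nmid a_i$, and then carry out the local Whittaker-function (local density) computation over $F_\mathfrak p$ for $\mathfrak p\mid 2$ --- dyadic local densities being the notoriously delicate ingredient of this circle of ideas. It is that computation which produces the constraint ``$m$ odd and $m^2\equiv D\pmod{16}$'' and the twist by $\mathfrak p_t^{-2}$ (the discrepancy between the dyadic density for $L$ and that for the $\SL_2(\Z)$-lattice, responsible for the bound $D/16$ here versus $D/4$ in the $j$-case). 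The remaining difficulty is bookkeeping: the power of $2$ in $C$, the factor $\tfrac{1+\ord_\mathfrak p(t\OO_F)}2$, and the exact cancellation of the non-inert, archimedean and Chowla--Selberg contributions, so that the final identity holds on the nose.
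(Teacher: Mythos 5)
Your route is genuinely different from the paper's: you work on $X_0(2)$ itself as a signature $(1,2)$ orthogonal variety, lift a one-variable product $\prod_{[\mathfrak a_2]}(\omega_2(z)-\omega_2(\tau_{\mathfrak a_2}))$ (times an $\eta$-quotient), and evaluate its Green function on the $d_1$-CM cycle; the paper instead realizes $X_0(2)\times X_0(2)$ as a $(2,2)$-variety for the lattice $L=\left(\begin{smallmatrix}\Z&\Z\\ 2\Z&\Z\end{smallmatrix}\right)$, exhibits the two-variable function $\omega_2(z_1)-\omega_2(z_2)$ itself as a Borcherds lift of an explicit weight-$0$ input built from $2^{12}\omega_2^{-1}+12$, identifies pairs of Heegner points with big CM points for the biquadratic field $E=\Q(\sqrt{d_1},\sqrt{d_2})$, and then applies \cite{BKY12}. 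Your plan has, as written, several genuine gaps. First, the central analytic input you invoke is not available in your setting: the big CM value formula of \cite{BKY12} requires the CM $0$-cycle to come from a torus attached to a CM field of degree $n+2$ with $n$ even (maximal in $\GSpin(V)$); on a signature $(1,2)$ variety the cycle $Z(n_1,\mu_1)$ is a \emph{small} CM cycle attached to the imaginary quadratic field $E_1$, and the applicable results (Schofer \cite{Schofer}, Bruinier--Yang \cite{BY09}) express the CM value through the derivative of an incoherent Eisenstein series over $\Q$ paired with a \emph{unary} theta series --- not directly through a Hilbert Eisenstein series over $F=\Q(\sqrt D)$. Reassembling that output into the $E/F$ ideal-count $\rho(t\mathfrak p^{-1}\mathfrak p_t^{-2})$ with the constraints ``$m$ odd, $m^2\equiv D\pmod{16}$'' is itself a substantial computation that your sketch treats as already packaged.

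Second, two steps you assert would need proof and are exactly where the difficulty sits. (i) Existence of the weight-$\tfrac12$ input form with your prescribed principal part is not automatic at level $2$ (there is an obstruction space of weight-$\tfrac32$ cusp forms for the dual Weil representation), and inserting $\eta(z)^a\eta(2z)^b$ forces the lift to have nonzero weight, so the CM evaluation genuinely produces Chowla--Selberg/Kronecker-limit terms; the claim that these ``must cancel'' against the elementary terms is precisely what would have to be established, and it is not a formality. The paper avoids this issue structurally: its two-variable input has $c(0,0)=0$, so the lift is a modular \emph{function}, and the only possible constant-term contribution, $24\,a_0(\tilde\phi)$ coming from $c(0,\mu_2)=24$, is shown to vanish by an explicit dyadic Whittaker computation (Lemma \ref{lem:CalWhittaker}(1)); the boundary behaviour is controlled by the Borcherds product expansion at the cusp rather than by $\eta$-factors. (ii) The identification of the class-group orbits $\{\tau_{\mathfrak a_i}:2\nmid a_i\}$ with specific weighted Heegner divisors $Z(n_i,\mu_i)$ on $X_0(2)$ (including multiplicities, and the fact that for $d_i\equiv 1\pmod 8$ one gets exactly the $\Cl(E_i)$-orbit and not a larger cycle) is asserted but not argued; in the paper this is the content of Lemmas \ref{lem:classprojection}, \ref{lem3.3} and \ref{lem5.4}, which show the relevant class projection is an isomorphism. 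With these points supplied, a Schofer-style one-variable argument could plausibly be carried out, but as it stands the sketch replaces the two hardest steps (the correct CM value formula and the cancellation of archimedean terms) with appeals to results that do not apply in the stated form.
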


In his 2006 thesis \cite{Schofer}, Schofer used regularized theta lifting to generalize the Gross-Zagier factorization formula to small CM values of the so-called Borcherds products on the orthogonal Shimura varieties of type $(n, 2)$. Bruinier and Yang generalized it to big CM values of Hilbert  modular forms (which are Borcherds products) over a real quadratic field \cite{BY06}. More recently,  Bruinier, Kudla, and Yang (\cite{BKY12})  generalized it to big CM values of Borcherds products on Shimura varieties of orthogonal type $(n, 2)$, following Schofer and \cite{BY09}. On a different track,  Lauter, Goren, and Viray  have used geometric methods to generalize the Gross-Zagier formula to Igusa's $j$-invariants  for genus two curves, which have important applications to genus two curve cryptosystem (\cite{GL07}, \cite{GL12}, \cite{LV15}). Yang also proved Lauter's conjecture on Igusa's $j$-invariants  by combining  the result in \cite{BY06} with his work on arithmetic intersection \cite{Yang13}. The big CM value formula in \cite{BY06}, \cite{BKY12} has also been used to prove certain cases of the Colmez conjecture (\cite{Yang10-AJM},  \cite{Yang10-CJM},  \cite{Yang13}, \cite{BHKRY}), and the average Colmez conjecture (\cite{AGHMP}). Dongxi Ye is extending the result to other modular curves of genus zero \cite{Ye}.

This paper is the first part of our effort to prove Yui and Zagier's conjectural formula  using the big CM value  formula.

     The general idea is as follows. Let $\Gamma$ be a congruence subgroup such that  the compactification of $X_\Gamma=\Gamma\backslash \H$ has genus zero, and let $\Psi$ be a generator of the function field of $X_\Gamma$, which is a modular function for $\Gamma$.  Then  the difference function $\Psi(z_1) -\Psi(z_2)$ is a two variable modular function on $X_\Gamma \times  X_\Gamma$ with divisor being the diagonal divisor. We view $X_\Gamma \times X_\Gamma$ as an orthogonal Shimura variety of type $(2,2)$ associated to $(V=M_2(\Q), Q=N\det)$ for some positive integer $N$. One can show that the  diagonal divisor is a special divisor on the product $X_\Gamma \times X_\Gamma$ so that $\Psi(z_1) -\Psi(z_2)$ has a chance to be a Borcherds lifting (product) of some weakly holomorphic modular forms (\cite{Borcherds98}, \cite{Bruinier02}).
     The first task is to find a weakly holomorphic modular  form, if any,  whose  Borcherds lifting  is the  difference $\Psi(z_1) -\Psi(z_2)$  (\cite{Borcherds98}, \cite{Bruinier02}, see Section \ref{sect:Borcherds}). There are two complications even with Bruinier's converse results (\cite{Bruinier02}, \cite{Bruinier14}). First, when $N >1$, Bruinier's converse theorem does not apply.  Second, there are two variable modular functions whose divisors are only supported on the boundary, so it is not enough to compare the divisors of the Borcherds product with  our function only in the open Shimura variety.  We also need to understand their boundary behavior. The Borcherds product expansion  is important in this aspect.  In this paper, we are only successful in this step for the Weber functions $\omega_i$ (Section \ref{sect:Weber})  but not  for the more interesting Weber functions $\bold f_i$ of level $48$.

The second task is to identify a pair of Heegner points $(\tau_{\mathfrak a_1}, \tau_{\mathfrak a_2})$ with a big CM  point on $X_\Gamma \times  X_\Gamma$  associated to the CM  number field $E=\Q(\sqrt{d_1}, \sqrt{d_2})$ in the sense of Bruinier, Kudla, and Yang in \cite{BKY12}. This is done in Section \ref{sect:BigCM}. The third task is to apply the big CM value formula in \cite{BKY12} (assuming that $\Psi$ is a Borcherds lifting) to provide the expected formula.  One serious problem (for the Yui-Zagier conjecture) is that the big CM cycle in \cite{BKY12} is likely bigger in size than the ideal class groups used in Yui-Zagier's conjectural formula in general. One might need to use Shimura's reciprocity law to analyze the Galois action on the values as in  \cite{Gee} to solve the problem.  In the case of $\omega_2$, the condition  $d_i \equiv 1 \pmod 8$ allows us to choose an embedding from $E_i$ to $\GL_2(\Q)$ so that the ideal class group  maps into $X_0(2)$ nicely.  Another minor complication (interesting feature) is the explicit computation of the Fourier coefficient of the derivatives of some incoherent Eisenstein series since Schwartz functions are not  factorizable  in the $\omega_2$ case (Section \ref{sect:Weber}).

 Here is the organization of this paper. In Section \ref{sect:Borcherds}, we review Borcherds lifting, Borcherds product expansion (\cite{Borcherds98}, \cite{Bruinier02}), and the big CM value formula (\cite{BKY12}). In Section \ref{sect:BigCM}, we identify the product $X_\Gamma \times X_\Gamma$  of two copies of a modular curve
with a Shimura variety of orthogonal type $(2, 2)$ and identify its big CM  points with pairs of the CM  points on the modular curve $X_\Gamma$. In Section \ref{GrossZagier}, we reprove Theorem \ref{theo:GrossZagier} using the big CM value formula. In Section \ref{sect:Weber}, we first identify $\omega_2(z_1) -\omega_2(z_2)$
 with a Borcherds lifting of some explicit weakly holomorphic modular forms, and then use the big CM value formula to prove Theorem \ref{theo1.3}.

{\bf Acknowledgement}:  The authors thank Dongxi Ye for carefully reading the earlier drafts and giving us very valuable feedback and correction. They thank the anonymous referee for his/her suggestion and comments. Part of the work was done when Yin visited  Department of Mathematics at UW-Madison during 2014-15  and when Yang visited the Morninside Center of Mathematics at Beijing during summer of 2015. The authors thank both institutes for providing excellent working conditions for them.

\section{Borcherds lifting and the Big CM value formula} \label{sect:Borcherds}

\subsection{Borcherds lifting and Borcherds product expansion} \label{sect4.1} \label{Borcherds1}

In this subsection, we review the beautiful work of Borcherds in details using slightly different convention and notation for our purpose. Let $(V, Q)$ be a quadratic space over $\Q$ of signature $(n, 2)$, and let $L$ be an even integral lattice, i.e., $Q(x)=\frac12(x,x) \in
\mathbb Z$ for $x \in L$. Let
$$
L'=\{ y \in V:\, (x, y) \in \mathbb Z, \hbox{ for } x \in L \}
\supset L
$$
be its dual. We assume in this paper that $n$ is even for simplicity.  Let $H=\Gspin(V)$, and let  $\mathbb D$ be the oriented negative $2$-planes in $V_\R$. Then  for a compact open subgroup $K$ of $H(\A_f)$, there is a Shimura variety $X_K$ defined over $\Q$ such that
$$
X_K(\C) = H(\Q) \backslash ( \mathbb D \times  H(\A_f)/K).
$$
We will identify $X_K$ with $X_K(\C)$ in this section. We  assume that $K$
 fixes $L$ and acts on $L'/L$ trivially. The Hermitian symmetric domain $\mathbb  D$ has two other useful forms.
 Let
 \begin{equation}
 \mathcal L = \{ w \in V_\C:\,  (w, w) =0,  \quad (w, \bar w)<0\}.
 \end{equation}
 Then one has an isomorphism
 $$
 \mathcal L/\C^\times \cong \mathbb D, \quad  w=u + i v \mapsto \R u + \R (-v).
 $$
 This isomorphism gives a complex structure on  $\mathbb D$, and we can view  $\mathcal L$ as a line bundle over $\mathbb D$---the tautological line bundle. It descends to a line bundle $\mathcal L_K$ over $X_K$---the line bundle of modular forms of weight $1$ on $X_K$.  Finally, given an isotropic element $\ell \in V$, choose another element $\ell' \in V$ such that $(\ell, \ell')=1$, and let $V_0=(\Q\ell + \Q \ell')^\perp$. Then we have a tube domain (associated to $(\ell, \ell')$):
$$
\mathcal H =H_{\ell, \ell'}=\{ z =x +i y \in  V_{0,\C}:\,  Q(y) <0\}.
$$
The map
$$
w=w_{\ell, \ell'}: \mathcal H \rightarrow \mathcal L, \quad  w(z) = z + \ell' - (Q(z) + Q(\ell'))\ell
$$
 gives an isomorphism $\mathcal H_{\ell, \ell'} \cong \mathcal L/\C^\times$, and actually a nowhere vanishing section of the line bundle $\mathcal L$.  We emphasize that $w$ depends on the choice  of the primitive isotropic vector $\ell$ and the subspace $\Q \ell +\Q \ell'$, but not $\ell'$. Furthermore, this map $w$ induces an action of $\Gamma =K\cap H(\Q)^+$ on $\mathcal H$ and an automorphy factor $j(\gamma, z)$  characterized by the following identity:
  \begin{equation}
  \gamma w(z) = \nu(\gamma) j(\gamma, z) w(\gamma z).
  \end{equation}
Here $H(\R)^+$ is the identity component of $H(\R)$, and $H(\Q)^+=H(\Q)\cap H(\R)^+$, $\nu(\gamma)$ is the spinor norm of $\gamma$. This action preserves the two connected components of $\mathcal H=\mathcal H^+ \cup \mathcal H^-$. A  (meromorphic) function $\Psi$ on $\mathcal H^+$ is called a (meromorphic) modular form for $\Gamma$ of weight $k$ if
\begin{equation}
\Psi(\gamma z) = j(\gamma, z)^k \Psi(z).
\end{equation}
Alternatively, it is a section of  the line bundle $\mathcal L_K^k$ over $X_K$.

For a vector $x\in V$ with $Q(x)>0$, and $h\in H(\A_f)$. Let
$$
H_x =\{ h \in H:\,  h(x) =x\},  \quad \mathbb D_x =\{ z \in \mathbb D:\, (x, z) =0\}, \quad \hbox{ and } \quad K_{x, h} =H_x(\A_f) \cap hKh^{-1}.
$$
Then the map
$$
 H_x(\Q) \backslash (\mathbb D_x \times H_x(\A_f)/K_{x, h}) \rightarrow X_K(\C), \quad [z, h_1] \mapsto [z, h_1h]
$$
gives a divisor $Z(x, h)$ in $X_K$. It is actually defined over $\Q$.  For a rational number $m>0$ and $\phi\in S(V_f)$, if there is an $x \in V$ with $Q(x)=m$, we define, following Kudla \cite{Kudla97}, the weighted special divisor
$$
Z(m, \phi) =\sum_{ h\in H_x(\A_f) \backslash H(\A_f)/K} \phi(h^{-1}x) Z(x, h).
$$
When there is no $x\in V$ with $Q(x)=m$, we simply set $Z(m, \phi)=0$.

 Associated to the quadratic space $V$ is a reductive dual pair $(\SL_2, O(V))$ and a Weil  representation $\omega =\omega_{V, \psi}$ of $\SL_2(\A)$ on $S(V_\A)=S(V_f) \otimes S(V_\infty)$, where $V_f =V\otimes_\Q \A_f$ and  $V_\infty =V\otimes_\Q \Q_\infty =V\otimes_\Q \R$. Embed $\SL_2(\Z)$ into $\SL_2(\hat\Z)$ diagonally, and let $S_L \subset S(\A_f)$ be the subspace of Schwartz functions $\phi$ which is supported on $\hat L'=L'\otimes_\Z \hat Z$ and is $\hat L$-translation invariant, i.e., $\phi(x)$ depends only on $x \mod \hat L$. Then
$$
S_L =\oplus_{\mu \in L'/L} \C \phi_\mu, \quad \phi_\mu =\cha (\mu +\hat L).
$$
It is easy to check that $S_L$ is $\SL_2(\Z)$-invariant under the Weil representation $\omega$, we denote this representation $\omega_L$.  One has by definition
\begin{align}
\omega_L(n(b)) \phi_\mu &= e(-b Q(\mu))\phi_\mu, \quad b \in \Z,
\\
\omega_L(w) \phi_\mu &= e(\frac{n-2}8) ([L':L])^{-\frac{1}2} \sum_{\nu \in L'/L} e( (\mu, \nu)) \phi_\nu.   \notag
\end{align}
Here
$$
n(b) =\kzxz {1} {b} {0} {1}, \quad w = \kzxz {0} {-1} {1} {0}
$$
and we have used the fact
$$
\psi_f(x) =\psi_\infty(-x) =e(-x)
$$
when $x \in \Q$. We also write
$$
\quad m(a) =\kzxz {a} {0} {0} {a^{-1}}.
$$
If we identify $S_L \cong \C[L'/L]=\oplus_{\mu \in  L'/L} \C e_\mu$ via $\phi_\mu \mapsto e_\mu$ and let $\rho_{L_-}$ be the Weil representation in \cite{Borcherds98} (also \cite{Bruinier02}) associated to the quadratic lattice $L_-$, where $L_-=L$ but with quadratic form $Q_-(x) =-Q(x)$, then  one sees immediately
\begin{equation}
\omega_L =\rho_{L_-}.
\end{equation}


 Recall that a meromorphic function  $f:\H\to S_L$ is called a {\em  weakly holomorphic modular form}
of weight $k$ with respect to $\SL_2(\Z)$ and $\omega_L$ if it
satisfies the following conditions.
\begin{enumerate}
\item[(i)]  One has $f \mid_{k,\omega_L} \gamma= f$
for all $\gamma=\abcd \in \Gamma$, where
$$
f \mid_{k,\omega_L} \gamma(\tau) = (c \tau + d)^{-k} \omega_L(\gamma)^{-1}f(\tau).
$$

\item[(ii)] There is a $S_L$-valued Fourier polynomial
\[
P_f(\tau)=\sum_{\mu\in L'/L}\sum_{n\leq 0} c(n,\mu)\, q^n\,\ph_\mu
\]
such that $f(\tau)-P_f(\tau)=O(e^{-\eps v})$ as $v\to \infty$ for
some $\eps>0$.
\end{enumerate}
The Fourier polynomial $P_f$  is called the {\em principal part}
of $f$. We denote the vector space of these weakly holomorphic modular
forms by  $M_{k,\omega_L}^!$. The Fourier expansion of any $f\in
M_{k,\omega_L}^!$ is of the form
\begin{equation}
\label{deff}
f(\tau)= \sum_{\mu\in L'/L}\sum_{\substack{n\in \Q\\ n\gg-\infty}} c(n,\mu)\, q^n\,\ph_\mu
\end{equation}
With this notation, we define
\begin{equation}
Z(f) = \sum_{n> 0, \mu\in L'/L} c(-n, \mu) Z(n, \mu).
\end{equation}
Here $Z(m, \mu)=Z(m, \phi_\mu)$.
 Let $S_L^\vee$ be
the dual space of $S_L$---the space of linear functionals on $S_L$,
and let $\{ \ph_\mu^\vee \} $ be the dual  basis in $S_L^\vee$ of
the basis $\{\ph_\mu \}$ of $S_L$.  Recall that   the Siegel theta
function (for $(z, h) \in X_K$)
$$
\theta_L(\tau, z, h) =\sum_{\mu} \theta(\tau, z,  h, \ph_\mu)\,
\ph_\mu^\vee
$$
is an $S_L^\vee$-valued holomorphic modular form of weight $0$ for
$\SL_2(\Z)$ and $\omega_L^{\vee} $ defined as follows (see \cite[Section 2]{BY09} or \cite{Kudla03} for details). For $z\in \mathbb D$, consider the orthogonal decomposition:
$$
V_\R = z \oplus z^\perp,  \quad x = x_{z} + x_{z^\perp}.
$$
Then for $\phi \in S(V_f)$ and $(z, h) \in X_K$, one defines
\begin{equation}
\theta(\tau, z, h, \phi) =v\sum_{ x \in V} \phi(h^{-1} x) e(\tau Q(x_{z^\perp}) + \bar\tau Q(x_z)).
\end{equation}
Here $v =\hbox{Im}(\tau)$ is the imaginary part of $\tau$.
Notice that $\theta(\tau, z, 1, \phi_\mu)= \overline{\theta(\tau, z, \mu)}$ in comparison with Borcherds'  Siegel theta functions.

 We consider the regularized theta integral
\begin{align}
\label{reg1} \Phi(z,h, f)=\int_{\calF}^{reg} \langle f(\tau),
\theta_L(\tau,z, h)\rangle\,d\mu(\tau) =\int_{\calF}^{reg}  \sum_{\mu \in L'/L}f_\mu(\tau) \theta(\tau, z, h, \phi_\mu)d\mu(\tau)
\end{align}
for $z\in \D$ and $h \in H(\A_f)$.  Here $\mathcal F$ is the
standard domain for $\SL_2(\mathbb Z) \backslash \mathbb H$, and   we write
$$
f(\tau) =\sum_{\mu \in  L'/L} f_\mu(\tau) \phi_\mu.
$$

The
integral is regularized as in \cite{Borcherds98}, that is,
$\Phi(z,h,f)$ is defined as the constant term in the Laurent
expansion at $s=0$ of the function
\begin{align}
\label{reg2} \lim_{T\to \infty}\int_{\calF_T} \langle f(\tau),
\theta_L(\tau,z,h)\rangle\,v^{-s} d\mu(\tau).
\end{align}
Here $\calF_T=\{\tau\in \H; \; \text{$|u|\leq 1/2$, $|\tau|\geq 1$,
and $v\leq T$}\}$ denotes the truncated fundamental domain and the
integrand \begin{equation} \langle f(\tau),\theta_L(\tau,z,h)\rangle
=\sum_{\mu \in L'/L} f_\mu(\tau) \theta(\tau, z, h, \ph_\mu)
\end{equation}
is the pairing of $f$ with the Siegel theta function, viewed as a
linear functional on the space $S_L$.
We remark that our regularized theta integral $\Phi(z, h, f)$ is exactly the same with the one in \cite{Borcherds98} and \cite{Bruinier02} when $h=1$.

The following is the first part of \cite[Theorem 13.3]{Borcherds98} (see also \cite[Theorem 3.22]{Bruinier02}) in our setting.

\begin{theorem} \label{BorcherdsLifting} Let $f(\tau) =\sum c(m, \mu) q^m \phi_\mu \in M_{1-\frac{n}2, \omega_L}^!$  be a weakly holomorphic modular form  of weight $1-\frac{n}2$ for $\SL_2(\Z)$ and $\omega_L$,  and assume that $c(m, \mu) \in \Z$ for $m <0$. Then there is a meromorphic modular form $\Psi(z, h, f))$  of  weight $k={c(0, 0)/2}$ on $X_K$  (with some characters) such that
\begin{enumerate}
\item  one has
$$
\mathrm{div} (\Psi(z, h,  f)^2 )= Z(f) =\sum_{m>0, \mu\in  L'/L} c(-m, \mu)  Z(m, \mu).
$$
Here we count $Z(m, \mu)$ with multiplicity $2$ or $1$ depending on whether  $2\mu \in  L$ or not.
\item  One has
$$-\log\| \Psi(z,h,  f)\|_{\Pet}^4= \Phi(z,h,  f).$$
Here $\| \,\|_{\Pet}$ is a suitably normalized Petterson norm.

\end{enumerate}
\end{theorem}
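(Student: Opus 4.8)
The plan is to carry out Borcherds' singular theta lift argument (in the streamlined form of Bruinier's book), so I only indicate the main steps. Everything is driven by an explicit evaluation of the regularized integral $\Phi(z,h,f)$ of \eqref{reg1}. First I would check that the regularization \eqref{reg2} is legitimate: by a Rankin--Selberg type estimate, the integral over the truncated domain $\calF_T$ has an asymptotic expansion in $T$ whose divergent terms are annihilated after inserting $v^{-s}$, so the constant term at $s=0$ exists; moreover $\Phi(\cdot,h,f)$ is real-analytic on $X_K$ away from $\supp Z(f)$ and has at worst logarithmic singularities transverse to each $Z(m,\mu)$. Both facts fall out of the unfolding below.

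The heart of the matter is the unfolding at a cusp. Fix a primitive isotropic $\ell\in L$ and $\ell'$ with $(\ell,\ell')=1$, so that $V=U\oplus V_0$ with $U=\Q\ell\oplus\Q\ell'$ hyperbolic, and work in the tube model $\mathcal H=H_{\ell,\ell'}$ with coordinate $z=x+iy$ and the nowhere-vanishing section $w(z)$ of $\mathcal L$. Splitting $\theta_L$ along $V=U\oplus V_0$, applying Poisson summation in the $\ell$-direction, and then unfolding the $\SL_2(\Z)$-integral against the resulting Fourier expansion (carrying the truncation $T$ and the regularizing $s$ along exactly as in Borcherds) yields, on each Weyl chamber $W\subset\mathcal H$, an identity
\[
\Phi(z,h,f)=\bigl(\text{a piecewise polynomial ``Weyl vector'' term in }z\bigr)+\text{const}-4\!\!\sum_{\lambda\in K_0,\ \lambda>0}\! c\bigl(Q(\lambda),\lambda\bigr)\,\log\bigl|1-e((\lambda,z))\bigr|,
\]
where $K_0$ is the image of $L'$ in $V_0$ and $\lambda>0$ is taken with respect to $W$. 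Matching this against the claimed identity $-\log\|\Psi\|_{\Pet}^4=\Phi$ identifies the candidate $\Psi$ as the Borcherds product
\[
\Psi(z,h,f)=e\bigl((\rho_W,z)\bigr)\!\!\prod_{\lambda\in K_0,\ \lambda>0}\!\!\bigl(1-e((\lambda,z))\bigr)^{c(Q(\lambda),\lambda)},
\]
with $\rho_W$ the Weyl vector of $W$; absolute convergence on a neighborhood of the cusp is the standard estimate, using that the exponents $c(Q(\lambda),\lambda)$ grow subexponentially in $\|\lambda\|$ while $|e((\lambda,z))|=e^{-2\pi(\lambda,y)}$ decays exponentially there.

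Given the explicit product, the two assertions are short. For (1): the only factors $1-e((\lambda,z))$ of $\Psi$ vanishing along a component $Z(x,h)$ (with $Q(x)=m$, $x$ of class $\mu$) are those for which the hyperplane $(\lambda,z)=0$ coincides with it, i.e. $\lambda$ proportional to $x$ in tube coordinates; their exponents sum to $c(-m,\mu)$, and whether $2\mu\in L$ or not is precisely the dichotomy of whether $\lambda$ and $-\lambda$ both contribute, giving multiplicity $2$ or $1$. Since $\Phi$ is a single global function that equals $-\log\|\Psi\|_{\Pet}^4$ on each chamber, the products from the various cusps and chambers glue to one meromorphic section $\Psi(z,h,f)$ on all of $X_K$ with $\mathrm{div}(\Psi^2)=Z(f)$. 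For (2): with convergence and the local identity in hand, $\Phi$ and $-\log\|\Psi\|_{\Pet}^4$ have the same $dd^c$ and the same logarithmic singularities, hence differ by a global pluriharmonic, i.e. constant, function, which is absorbed into the (thereby pinned-down) normalization of $\|\cdot\|_{\Pet}$. The weight $c(0,0)/2$ and the finite-order multiplier come from the transformation of the Weyl-vector term and of the prefactor $e((\rho_W,z))$ under $\Gamma=K\cap H(\Q)^+$ through the automorphy factor $j(\gamma,z)$, together with the root of unity $e((n-2)/8)$ occurring in $\omega_L(w)$.

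The step I expect to be the main obstacle is the unfolding: doing the Poisson summation and the $\SL_2(\Z)$-unfolding while keeping the truncation $T$ and the regularizing variable $s$ under honest control, isolating the piecewise-polynomial Weyl-vector part from the genuinely singular part, and then establishing absolute convergence of the resulting Fourier series and of the infinite product near the cusp. A second, more geometric, obstacle is promoting ``$\Phi$ is a Green function with the prescribed log-singularities'' to ``there is a genuine meromorphic modular form $\Psi$ of weight $c(0,0)/2$ with $-\log\|\Psi\|_{\Pet}^4=\Phi$'': this requires the local $dd^c$-lemma to produce holomorphic potentials, a verification that the local pieces patch without cohomological obstruction, and the identification of the relevant line bundle as $\mathcal L_K^{\otimes c(0,0)}$ — all of which emerge from the explicit product, which is why the computation above cannot really be bypassed.
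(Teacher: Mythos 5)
Your plan is, in substance, an outline of Borcherds' original singular-theta-lift proof (in the streamlined form of Bruinier's book), and as an approach it is sound; but be aware that the paper does not prove Theorem \ref{BorcherdsLifting} at all --- it is quoted directly from \cite[Theorem 13.3]{Borcherds98} (see also \cite[Theorem 3.22]{Bruinier02}), the only paper-specific content being the dictionary between conventions: the identification $\omega_L=\rho_{L_-}$ for the lattice $L_-$ with $Q_-=-Q$, the relation $\theta(\tau,z,1,\phi_\mu)=\overline{\theta(\tau,z,\mu)}$, and the remark that the regularized integral $\Phi(z,h,f)$ coincides with Borcherds' when $h=1$, the other components of $X_K$ being handled through the decomposition into $X_{\Gamma_j}$ with lattices $h_jL$. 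So what you do differently is to re-derive the cited theorem rather than invoke it, and what the paper's route buys is precisely avoiding the analytic work you yourself flag as the main obstacles. If your sketch is to stand on its own, a few points need care: in the paper's signature-$(n,2)$ convention the exponents in the product are $c(-Q(\lambda),\mu)$, and for non-unimodular $L$ (the case actually used later, with $|L'/L|=4$) the product carries an inner sum over $\mu\in L_0'/L$ with $p(\mu)=\lambda+M$ together with the phases $e((\mu,\ell'))$ --- this is exactly where the multiplicity $2$ versus $1$ dichotomy according to $2\mu\in L$ lives, and your formula suppresses both; the adelic variable $h$ must be threaded through, since Borcherds works classically; and the final step ``same $dd^c$ and same log-singularities, hence the difference is pluriharmonic, hence constant'' is not automatic on the non-compact $X_K$ without growth control at the boundary --- in Borcherds' argument no such patching is needed, because the unfolded expansion of $\Phi$ is literally $-4\log$ of the absolute value of the product up to the Weyl-vector and $\log v$ terms, which is also how the weight $c(0,0)/2$ and the finite-order character are actually pinned down.
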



To describe the Borcherds product expansion formula for $\Psi(z, h, f)$, we need some  preparation. First, it works in each connected component. By  the strong approximation theorem, one has
$$
H(\A_f) =
  \coprod H(\Q)^+ h_j K,
$$
so
$$
X_K =\coprod X_{\Gamma_j} =\coprod \Gamma_j \backslash \mathbb D^+,
$$
where $\Gamma_j=H(\Q)^+ \cap h_j K h_j^{-1}$ and $\mathbb D^+$ is one of the two connected components of $\mathbb D$. In this decomposition, one has
$$
Z(m, \phi_\mu)= \sum_{j} Z_{L_j}(m, \mu_j),
$$
where $L_j= h_j L=h_j\hat L \cap V$, and $\mu_j \in L_j'/L_j$ with $\mu_j -h_j \mu \in  \hat L_j$, and
$$
Z_{L_j}(m, \mu_j) = \{ z \in \mathbb D^+:\,  (z, x) =0 \hbox{ for some } x \in \mu_j+L_j, Q(x) =m\}.
$$
In the following, we will stick with the irreducible component $X_\Gamma =\Gamma \backslash \mathbb D^+$ and the lattice $L$. The other components are the same.

Assume that $V$ has an isotropic line $\Q \ell$ (a cusp). We assume that $\ell\in L$ is primitive, i.e., $L\cap \Q\ell =\Z \ell$.  Choose $\ell' \in L'$  with $(\ell, \ell')=1$.  Assume further $(\ell, L) =N_\ell\Z$ and choose $\xi \in L$ with $(\ell, \xi) =N_\ell$.  Let $M= (\Q \ell + \Q \ell')^\perp \cap L$, and let
$$
L_0'=\{ x \in L' : \,  (\ell, x) \equiv 0 \mod (N_\ell) \} \supset L.
$$
Then there is a projection
\begin{equation}\label{projection}
p:  L_0' \rightarrow M', \quad p(x) = x_M + \frac{(x, \ell)}{N_\ell}  \xi_M,
\end{equation}
where $x_M$ and $\xi_M$ are the orthogonal projections of $x,\xi \in V$ to $M_\Q=M\otimes_\Z \Q$. The projection $p$ has the nice property $p(L) \subset M$ although it is not an orthogonal projection anymore (see \cite[Pages 40-41]{Bruinier02}). So it induces a projection from $L_0'/L$ to $M'/M$.

Next, we define the Weyl chamber for
$$f =\sum f_\mu \phi_\mu=\sum c(m, \mu) q^m \phi_\mu \in M_{1-\frac{n}2, \omega_L}^!.$$
 Define
\begin{equation}\label{restrict1}
f_M(\tau)=\sum_{\lambda \in M'/M} f_{\lambda}(\tau) \phi_{\lambda, M} =\sum c_M(m, \lambda)q^m \phi_{\lambda, M},
\end{equation}
where  $\phi_{\lambda, M}=\cha(\lambda+\hat M)$,
\begin{equation}\label{restrict2}
f_{\lambda}(\tau)=\sum_{\substack{\mu\in L'_0/L\\ p(\mu)=\lambda}}f_{\mu}(\tau).
\end{equation}
Then $f_M$ is a $S_M$-valued modular form by Borcherds \cite[Theorem 5.3]{Borcherds98} with Weil representation  $\omega_M$.


Let $\Gr(M)$ be  the set of negative lines in $M_\R$ (the Grassmannian), which is a real  manifold of dimension $n-1$ (as $M$ has signature $(n-1, 1)$). For $\lambda \in M'/M$ and $m \in \Q$ with $m \equiv Q(\lambda) \pmod 1$, let
$$
Z_M(m, \lambda) = \{ z \in \Gr(M):\, (z, x) =0 \hbox{ for some } x \in \lambda+M, Q(x) =m\},
$$
which is either empty or a real divisor of $\Gr(M)$. The {\it Weyl chamber} associated to a weakly holomorphic form $f \in  M_{1-\frac{n}2, \omega_{L}}^!$ is the connected components of  (see \cite[Page 88]{Bruinier02})
$$
\Gr(M) - \bigcup_{\mu \in L_0'/L} \bigcup_{\substack{m \in Q(\mu)+\Z\\ m >0,  c(-m, \mu) \ne 0}} Z_M(m, p(\mu)).
$$
Given a Weyl chamber $W$  associated to $f$, we define its {\it Weyl vector} $\rho(W, f)=\rho(W, f_M)\in M'$ following Borcherds as follows (\cite[Section 10.4]{Borcherds98}, see also \cite[Page 88]{Bruinier02}). Let $\bar W$ be the closure of $W$. If $M\cap \bar W$ is anisotropic, it was defined in \cite[Section 9]{Borcherds98} with correction and extension given recently in \cite[Section 5]{BS17}. We don't need it in this paper and refer to \cite{BS17} for details. When $M\cap\bar W$ is isotropic, choose an isotropic  $\ell_M \in  M \cap \bar W$ and $\ell_M'\in M'$ with $(\ell_M, \ell_M') =1$.
Let $P=M\cap (\Q\ell_M + \Q \ell_M')^\perp$, which is positive definite of rank $n-2$. Similar to the projection $p$ from $L_0'/L$ to $M'/M$, one has also a projection $p$ from $M_0'/M$ to $P'/P$ defined in the same way. For the same reason, we have the weakly holomorphic modular form $f_P$ (coming from $f_M$).
Define
\begin{align}
\rho_{\ell_M'}&=\text{constant term of}\ {\theta}_{P}(\tau)f_{P}(\tau)E_2(\tau)/24, \label{eq:WeylVector1}
\\
\rho_{\ell_M} &=-\rho_{\ell_M'}Q(\ell_M')-\frac{1}{4}\sum_{\begin{scriptsize}\begin{array}{c}\lambda\in M_0'/M\\p(\lambda)=0+P \end{array}\end{scriptsize}}
         c_M(0, \lambda)\textbf{B}_2((\lambda,\ell_M')) \\
  &\qquad -\frac{1}{2}\sum_{\begin{scriptsize}\begin{array}{c}\gamma\in P'\\(\gamma,W)>0 \end{array}\end{scriptsize}}\sum_{\begin{scriptsize}\begin{array}{c}\lambda\in M_0'/M\\ p(\lambda)= \gamma +P\end{array}\end{scriptsize}}
        c_M(-Q(\gamma), \lambda)\textbf{B}_2((\lambda,\ell_M'))), \notag
        \\
  \rho_P&=-\frac{1}{2}\sum_{\begin{scriptsize}\begin{array}{c}\gamma\in P'\cap M'\\(\gamma,W)>0 \end{array}\end{scriptsize}}c_M(-Q(\gamma), \gamma) \gamma,
  \\
  \rho(W, f) &= \rho_P + \rho_{\ell_M} \ell_M + \rho_{\ell_M'} \ell_M'. \label{eq:WeylVector2}
\end{align}
Here
$$
E_2=1-24\sum_{n>0}\sigma_{1}(n)q^n$$
is the weight $2$ Eisenstein series,  $\bold B_2(x)=\{x\}^2-\{x\}+\frac{1}{6}$ is the second Bernouli polynomial of $\{x\}$, where $ 0\le \{x \} = x -[x]<1$ is the fractional part of $x$.

Now we can state the beautiful product expansion formula of Borcherds as follows in the signature $(n, 2)$ case (\cite[Theorem 13.3]{Borcherds98}, see also \cite[Theorem 3.22]{Bruinier02}):

\begin{theorem} (Borcherds)  \label{BorcherdsProduct} Let the notation be as  above. Let $W$ be a Weyl chamber of $f$ whose closure contains $\ell_M$.  Then the memomorphic automorphic form $\Psi(z,  f)=\Psi(z, 1, f)$ has an infinite product expansion near the cusp $\Q\ell$ (more precisely, when $\hbox{Im}(z) \in W$ with $-Q(\hbox{Im}(z))$ sufficiently large).
$$
\Psi(z,  f)=C e((z, \rho(W, f))) \prod_{\substack{\lambda \in M' \\ (\lambda, W) >0}}
  \prod_{\substack{ \mu \in L_0'/L\\ p(\mu) \in \lambda+M}}\left[ 1 -e((\lambda, z)+(\mu, \ell'))\right]^{c(-Q(\lambda), \mu)}.
$$
Here $C$ is a constant with absolute value
\begin{equation} \label{eq:constant}
\left | \prod_{\delta \in \Z/N_\ell, \delta \ne 0}(1-e(\frac{\delta}N_\ell))^{\frac{c(0, \frac{\delta}N_\ell \ell)}2} \right |.
\end{equation}
\end{theorem}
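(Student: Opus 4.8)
The plan is to fix the cusp $\Q\ell$, work in the tube-domain coordinate $z\in\mathcal H_{\ell,\ell'}$ on the component $X_\Gamma$, and reduce the whole statement to an explicit evaluation of the regularized theta integral $\Phi(z,f)=\Phi(z,1,f)$ in a neighbourhood of the cusp. Once $\Phi(z,f)$ is in closed form, Theorem~\ref{BorcherdsLifting}(2) gives $-\log\|\Psi(z,f)\|_{\Pet}^4=\Phi(z,f)$, and the Petersson norm differs from $|\Psi(z,f)|$ only by a nowhere-vanishing real-analytic automorphy factor; hence $|\Psi(z,f)|$ is forced to equal the absolute value of $e((z,\rho(W,f)))\prod[\cdots]$ up to a multiplicative constant. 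Since by Theorem~\ref{BorcherdsLifting}(1) both $\Psi(z,f)$ and the asserted product are meromorphic on $\mathcal H^+$ with the same divisor, their quotient is holomorphic and nowhere zero; having constant absolute value it is a constant $C$, and $|C|$ is then pinned down by a direct comparison of the two sides as $-Q(\mathrm{Im}\,z)\to\infty$.

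First I would carry out the rank reduction of $\Phi$. Using that $\ell\in L$ is primitive with $(\ell,\ell')=1$, the rational quadratic space splits as $L_\Q=M_\Q\perp(\Q\ell+\Q\ell')$; applying Poisson summation in the $\Z\ell$-direction of the Siegel theta kernel and unfolding the regularized integral over $\SL_2(\Z)\backslash\H$ along $\Gamma_\infty$ expresses $\Phi_L(z,f)$ as $\Phi_M(z,f_M)$, for $f_M$ the restriction \eqref{restrict1}--\eqref{restrict2}, plus an elementary term (cf. \cite[\S\S6--7]{Borcherds98}, \cite[\S2.5]{Bruinier02}). Because $\ell_M$ lies in the closure $\bar W$ of the chosen Weyl chamber, I would then repeat the reduction with $M$ in place of $L$: splitting off the hyperbolic plane $\Q\ell_M+\Q\ell_M'$ leaves the positive definite lattice $P$ of rank $n-2$, and $\Phi_M$ becomes $\Phi_P(z,f_P)$ plus further elementary contributions. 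For a positive definite lattice the regularized theta integral against a weakly holomorphic form is evaluated outright: it is governed by the constant term of $\theta_P(\tau)\langle f_P(\tau),\,\cdot\,\rangle$ together with an $E_2$-contribution, which is precisely the data appearing in \eqref{eq:WeylVector1}.

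Then comes the bookkeeping that turns the two reductions into the asserted formula. The $z$-independent pieces accumulated along the way organize into $(z,\rho(W,f))$ via \eqref{eq:WeylVector1}--\eqref{eq:WeylVector2}: the $E_2/24$ term, the Bernoulli-polynomial sums $\mathbf B_2((\lambda,\ell_M'))$, and the sum over $\gamma\in P'$ with $(\gamma,W)>0$ are exactly what the constant terms and the $\ell$- and $\ell_M$-direction summations produce. The $z$-dependent pieces, after applying $-\sum_{k\ge1}x^k/k=\log(1-x)$, collapse into $\sum_{\lambda\in M',\,(\lambda,W)>0}\ \sum_{\mu\in L_0'/L,\ p(\mu)\in\lambda+M}c(-Q(\lambda),\mu)\log\bigl|1-e((\lambda,z)+(\mu,\ell'))\bigr|$. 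This identifies $\Phi(z,f)$ with $-4\log$ of the absolute value of $e((z,\rho(W,f)))\prod[\cdots]$, so the quotient argument above yields $\Psi(z,f)=C\,e((z,\rho(W,f)))\prod[\cdots]$. Finally the absolute value of $C$ is read off from the part of the $\Z\ell$-direction sum that does not involve $z$, giving \eqref{eq:constant}, and one checks the product transforms under $\Gamma$ with the automorphy factor $j(\gamma,z)^{c(0,0)/2}$, consistent with part~(1).

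I expect the main obstacle to be twofold. Analytically, the unfoldings and the convergence of the infinite product are valid only when $\mathrm{Im}\,z$ lies in $W$ with $-Q(\mathrm{Im}\,z)$ large, so one must control the error terms in the Fourier expansion of $\theta_L$ near the cusp uniformly on such a region and justify interchanging the regularization with the summations. Combinatorially, the delicate part is keeping precise track of the two projections $p\colon L_0'/L\to M'/M$ and $M_0'/M\to P'/P$, the level $N_\ell=(\ell,L)$, and the discriminant-form index factors $[L':L]$, so that the scattered constant terms recombine into the Weyl vector without error and so that $C$ emerges with absolute value exactly \eqref{eq:constant}; this is where the $\delta/N_\ell$-terms there originate. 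Both difficulties are of course resolved in \cite{Borcherds98} and \cite{Bruinier02}; the task here is only to transcribe that argument faithfully into the present conventions.
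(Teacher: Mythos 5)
Your plan is essentially a re-derivation of Borcherds' product expansion itself -- rank-reducing the regularized theta integral from $L$ to $M$ to the positive definite $P$, evaluating the $E_2$/Bernoulli contributions, and recovering $\Psi$ up to a constant from $\Phi=-4\log\|\Psi\|_{\Pet}$ by a constant-modulus quotient argument -- i.e., a transcription of \cite[Sections 6--13]{Borcherds98} and \cite{Bruinier02} into the present conventions, and as such it is a valid (if heavy) route. The paper does something genuinely different and much lighter: it deduces Theorem \ref{BorcherdsProduct} formally from the \emph{statement} of \cite[Theorem 13.3]{Borcherds98} by passing to $L_-=L$ with $Q_-=-Q$, which has signature $(2,n)$ and so falls under Borcherds' conventions; one checks that the tube domain and section coincide ($w_-(z)=w(z)$), uses $c(m,\mu)=c(m,-\mu)$ (invariance of $f$ under $-I$) to verify $\rho_{\ell_M',-}=\rho_{\ell_M'}$, $\rho_{\ell_M,-}=-\rho_{\ell_M}$, $\rho_{P,-}=-\rho_P$, hence $\rho(W,f)_-=-\rho(W,f)$ (along the way correcting a missing $\tfrac14$-summand in \cite[Theorem 10.4]{Borcherds98}), and then substitutes $\lambda\mapsto-\lambda$, $\mu\mapsto-\mu$ in Borcherds' product to land exactly on the displayed formula with $|C|$ as in \eqref{eq:constant}. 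Your route buys a self-contained account in which the provenance of the Weyl vector and of the constant is transparent; its cost is redoing analytic work that you end up citing anyway, so in effect you prove more than the paper needs. If you do pursue it, two points need care: the terms that assemble into $e((z,\rho(W,f)))$ are the pieces of $\Phi$ that are \emph{linear} in $\operatorname{Im}(z)$, not the $z$-independent ones (those feed into $|C|$ together with the $\lambda=0$, $\mu=\frac{\delta}{N_\ell}\ell$ factors), and the constant-modulus argument for the quotient only closes after the $\log(-Q(\operatorname{Im}(z)))$ terms in $\Phi$ are matched against the Petersson normalization appearing in Theorem \ref{BorcherdsLifting}(2).
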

\begin{proof} (sketch) We derive the formula from  \cite[Theorem 13.3]{Borcherds98}. Let $L_-=L$ with quadratic form $Q_-(x) =-Q(x)$ so that $L_-$ has signature $(2, n)$, for which we can apply Borcherds' theorem.  We use subscript $-$ to indicate the corresponding notation in Borcherds. First notice that  the symmetric domain $\mathbb D_-=\mathbb D$ and the tautological bundle $\mathcal L_-=\mathcal L$. Since $(\ell, \ell')=1$, one has $(-\ell, \ell')_-=1$. So the tube domains $\mathcal H_{\ell, \ell'}$ and $\mathcal H_{-\ell, \ell', -}$ are the same too. Furthermore, for $z \in \mathcal H_{\ell, \ell'}=\mathcal H_{-\ell, \ell', -}$, one has
$$
w_-(z) =z + \ell' -(Q_-(z) + Q_-(\ell')) (-\ell) = w(z).
$$
 Notice that $f|\kzxz {-1} {0} {0} {-1} =f$ implies
\begin{equation} \label{eq:sign}
c(m, \mu) =c(m,-\mu), \quad \hbox{  and  } \quad
c_M(m, \lambda) =c_M(m, -\lambda).
\end{equation}
Since $(\ell_M, \ell_M')=1$, one has $(\ell_M, -\ell_M')_-=1$. Using \cite[Theorem 10.4]{Borcherds98} (a minor mistake there missing the $\frac{1}4$ summation  part), one checks that $\rho_{\ell_M', -} =\rho_{\ell_M'}$ (as $\theta_{P, -}=\overline{\theta_P}$), and
\begin{align*}
\rho_{\ell_M, -} &= - \rho_{\ell_M',-} Q_-(\ell_M')
 +\frac{1}4 \sum_{\substack{\lambda \in M_0'/M, \\ p(\lambda) =0}} c_M(0, \lambda) \bold B_2( (\lambda, -\ell_M')_-)
 \\
  &\qquad + \frac{1}{2} \sum_{\substack {\gamma \in P' \\ (\gamma, W)_- >0}} \sum_{\substack{\lambda \in M_0'/M, \\ p(\lambda) =0+P}}
  c_M(Q_-(\gamma), \lambda) \bold B_2( (\lambda, -\ell_M')_-)
  \\
  &= \rho_{\ell_M'} Q(\ell_M') +\frac{1}4 \sum_{\substack{\lambda \in M_0'/M, \\ p(\lambda) =\gamma+P}} c_M(0, \lambda) \bold B_2((\lambda, \ell_M'))
  \\
   &\qquad + \frac{1}{2} \sum_{\substack {\gamma \in P' \\ (\gamma, W) >0}} \sum_{\substack{\lambda \in M_0'/M, \\ p(\lambda) =\gamma+P}}
  c_M(-Q(\gamma), \lambda) \bold B_2( (\lambda, \ell_M')) \\
 &=-\rho_{\ell_M}.
 \end{align*}
In the last identity, we substitute $\gamma$ by $-\gamma$ and $\lambda$ by $-\lambda$, and apply (\ref{eq:sign}).
Similarly, one checks $\rho_{P,-} =-\rho_P$.
So Borcherds' Weyl vector
$$
\rho(W, f)_-=\rho_{P, -} + \rho_{\ell_M, -}\ell_M + \rho_{\ell_M'}(- \ell_M') = -\rho(W, f).
$$
So \cite[Theorem 13.3]{Borcherds98} gives  for $z \in \mathcal H_{\ell, \ell'}$
\begin{align*}
\Psi(z, f)
 &=C e((z, \rho(W, f)_-)_-) \prod_{\substack {\lambda \in M' \\ (\lambda, W)_- >0}}
  \prod_{\substack {\mu \in  L_0'/L \\ p(\mu) =\lambda+M}} \left[1 - e( (\lambda, z)_- + (\mu, \ell')_-)\right]^{c(Q_-(\lambda), \mu)}
  \\
  &=C e( (z, \rho(W,f))) \prod_{\substack {\lambda \in M' \\ (\lambda, W) >0}}
  \prod_{\substack {\mu \in  L_0'/L \\ p(\mu) =\lambda+M}} \left[1 - e( (\lambda, z) + (\mu, \ell'))\right]^{c(-Q(\lambda), \mu)}
\end{align*}
as claimed. Here we again substitute $\lambda$ and $\mu$ by $-\lambda$ and $-\mu$, and apply (\ref{eq:sign}).

\end{proof}
\begin{remark}  It is worthwhile to make a few remarks to clear up some (potentially confusing) differences in different versions.
\begin{enumerate}
\item
The sign difference in the formula above and the formula in \cite[Theorem 13.3]{Borcherds98} (and \cite[Theorem 3.22]{Bruinier02}) is due to the fact that they use $L_-$ (signature $(2,n)$) while we use $L$.
 \item
 The condition $p(\mu) \in \lambda+M$  here  is a more explicit reinterpretation of Borcherds' condition $\mu|M=\lambda$ given by Bruinier (\cite[Theorem 3.22]{Bruinier02}).

 \item  The constant $C$ can be taken as the product in (\ref{eq:constant}) at a given cusp. However, once it is fixed, the constants at  other cusps are determined by this constant (they are in the same connected component).

 \item   The conditions that  $n \ge 3$ and  that $M$ is  isotropic in  \cite{Bruinier02} was for convenience  and not necessary
 .

 \item  The neighborhood near the cusp $\Q \ell$ where the product formula is valid can be made precise. We refer to \cite[Theorem 3.22]{Bruinier02} for details.

 \item At different cusps, the product formulae look different. This is similar to the different Fourier expansions of a modular form at different cusps.

\end{enumerate}
\end{remark}

\subsection{Big CM cycles, incoherent Eisenstein series, and the big CM value formula} \label{sect:CMvalue}

Let $E$ be  a CM number field of degree $n+2$ with the maximal totally real subfield $F$. Let $\sigma_i,  1\le i \le \frac{n}2+1$ be distinct real embeddings of $F$. Choose  an element $\alpha \in  F$ with $\sigma_{\frac{n}2+1}(\alpha) <0$ and $\sigma_i(\alpha) >0$ for all $1 \le i \le \frac{n}2$, and let
$W=E$ with the $F$-quadratic form $Q_F(z) =\alpha z \bar z$. Let $W_\Q=E$ with the $\Q$-quadratic form
$$
Q_\Q(z) =\tr_{F/\Q} Q_F(z) =\tr_{F/\Q}(\alpha z \bar z).
$$
Notice that $(W_\Q, Q_\Q)$ is a $\Q$-quadratic space of signature $(n, 2)$. Now we assume that $(W_\Q, Q_\Q) \cong  (V, Q)$, where $(V, Q)$ is a given $\Q$-quadratic space of signature $(n,2)$. Write $n_0=\frac{n}2 +1$. Then we have
$$
V_\R \cong \oplus_{1 \le i \le n_0} W_{\sigma_i},
$$
where $W_{\sigma_i} =W\otimes_{F,\sigma_i} \R$ has signature $(2, 0)$ or $(0, 2)$ according to $1\le i <n_0$ or $i=n_0$. The negative two plane $W_{\sigma_{n_0}}$ gives rise to two `big' CM points $z_{\sigma_{n_0}}^\pm$, which turns out to be defined over a finite extension of $\sigma_{n_0}(F)$. Define an algebraic torus $T$ over $\Q$ by the following diagram:
\begin{equation} \label{eq:torus}
\xymatrix{
 1 \ar[r]  &\G_m \ar[r] \ar[d] &T \ar[r] \ar[d] &\hbox{Res}_{F/\Q} \SO(W) \ar[r] \ar[d] &1
\cr
 1 \ar[r]  &\G_m \ar[r] &H \ar[r] &\SO(V) \ar[r] &1.
 \cr
}
\end{equation}
Then $T$ is a maximal torus in $H=\Gspin(V)$ (thus the  name  big CM points and big CM cycles). It is known (\cite[Section 2]{BKY12}) that
$$
Z(W, z_{\sigma_{n_0}}^\pm)
= \{ z_{\sigma_{n_0}}^\pm\} \times (T(\Q) \backslash T(\A_f)/K_T),  \quad K_T=K\cap T(\A_f)
$$
is a zero cycle in $X_K$ defined over $F$, called a big CM cycle of $X_K$. Let
$Z(W)$ be the formal sum of all its Galois conjugates (counting multiplicity), which  is a big CM cycle of $X_K$ over $\Q$. We refer to \cite[Section 2]{BKY12} for a more precise definition and basic properties of this cycle.

Associated to this quadratic space and the additive adelic character $\psi_F =\psi \circ \tr_{F/\Q}$ is a Weil representation $\omega=\omega_{\psi_F}$ of $\SL_2(\A_F)$ (and thus $T(\A_\Q))$ on $S(W(\A_F)) =S(V(\A_\Q))$. Let $\chi=\chi_{E/F} $ be the quadratic Hecke character of $F$ associated to $E/F$, then $\chi=\chi_W$ is also the quadratic Hecke character of $F$ associated to $W$, and there is a $\SL_2(\A_F)$-equivariant map
\begin{equation}
\lambda=\prod \lambda_v : S(W(\A_F)) \rightarrow  I(0, \chi), \quad  \lambda(\phi) (g) = \omega(g) \phi(0).
\end{equation}
Here $I(s, \chi) =\Ind_{B_{\A_F}}^{\SL_2(\A_F) }\chi |\cdot|^s$ is the principal series, whose sections (elements) are smooth functions $\Phi$  on $\SL_2(\A_F)$ satisfying the condition
$$
\Phi(n(b) m(a) g,s )= \chi(a)|a|^{s+1}\Phi(g, s), \quad b \in \A_F,  \hbox{ and  } a \in \A_F^\times.
$$
Here $B =NM$ is the standard Borel subgroup of $\SL_2$. Such a section is called factorizable if $\Phi=\otimes \Phi_v$ with $\Phi_v\in I(s, \chi_v)$. It is called standard if $\Phi|_{\SL_2(\hat{\OO}_F) \SO_2(\R)^{n_0}} $ is independent of $s$. For a standard  section $\Phi \in I(s, \chi)$, its associated Eisenstein series is defined as
$$
E(g, s, \Phi) = \sum_{\gamma \in B_F \backslash \SL_2(F)} \Phi(\gamma g, s)
$$
for $\Re(s) \gg 0$.

For $\phi \in S(V_f) =S(W_f)$, let $\Phi_f $ be the standard section associated to $\lambda_f(\phi) \in I(0, \chi_f)$. For each real embedding $\sigma_i: F \hookrightarrow \R$,  let  $\Phi_{\sigma_i} \in I(s, \chi_{\C/\R})=I(s, \chi_{E_{\sigma_i}/F_{\sigma_i}})$ be the unique `weight one' eigenvector of $\SL_2(\R)$ given by
$$
\Phi_{\sigma_i}(n(b)m(a) k_\theta) = \chi_{\C/\R}(a) |a|^{s+1} e^{i  \theta},
$$
for $b \in \R$, $a\in \R^\times$, and $k_\theta =\kzxz {\cos\theta} {\sin \theta} {-\sin \theta} {\cos \theta} \in \SO_2(\R)$. We define  for $\vec\tau =(\tau_1, \cdots, \tau_{n_0}) \in \H^{n_0}$
$$
E(\vec\tau, s, \phi) =  \norm(\vec v)^{-\frac12} E(g_{\vec\tau}, s, \Phi_f \otimes  (\otimes_{1 \le i \le n_0}\Phi_{\sigma_i} )),
$$
where $\vec v =\hbox{Im}(\vec\tau)$, $\norm(\vec v) =\prod_i v_i$, and $g_{\vec\tau} = (n(u_i) m(\sqrt{v_i}))_{1\le i \le n_0}$. It is a (non-holomorphic) Hilbert modular form of scalar weight $1$ for some congruence subgroup of $\SL_2(\OO_F)$. Following \cite{BKY12}, we further normalize
$$
E^*(\vec\tau, s, \phi) = \Lambda(s+1, \chi) E(\vec\tau, s, \phi),
$$
where $\partial_F$ is the different of $F$, $d_{E/F}$ is the relative discriminant of $E/F$, and
\begin{equation} \label{eq:L-series}
\Lambda(s, \chi) =A^{\frac{s}2} (\pi^{-\frac{s+1}2}
\Gamma(\frac{s+1}2))^{n_0} L(s, \chi),  \quad A=\norm_{F/\mathbb Q}
(\partial_F d_{E/F}).
\end{equation}
The Eisenstein series is incoherent in the sense that  $\Phi= \otimes \Phi_{v}$ is in the image of $\lambda$ on $S(\mathcal C)$, where $\mathcal C$ is an incoherent system of quadratic spaces over $F_v$, given  by $\mathcal C_v= W_v$ for all places $v$ except the one $v=\sigma_{n_0}$.   This  incoherence forces $E^*(\vec\tau, 0, \phi)=0$ automatically.


\begin{proposition} (\cite[Proposition 4.6]{BKY12}) \label{prop4.3} Let $\ph \in S(V_f) =S(W_f)$.
For  a totally positive element $t\in F^\times_+$,
let $a(t,\ph)$ be the $t$-th Fourier coefficient of $E^{*,
\prime}(\vec\tau, 0, \ph)$ and write
the constant term  of  $E^{*, \prime}(\vec\tau, 0, \ph)$ as
$$\ph(0)\Lambda(0, \chi)  \log \norm(\vec v) +a_0(\ph).$$
Let
$$
\mathcal E(\tau, \ph) =  a_0(\ph) + \sum_{n \in \mathbb
Q_{>0}} a_n(\ph)\, q^n
$$
where (for $n >0$)
$$
a_n(\ph) =\sum_{t \in F^\times_+, \, \tr_{F/\mathbb Q} t =n} a(t,\ph).
$$
Here $F_+^\times$ consists of all totally positive elements in $F$.
Then, writing $\tau^\Delta=(\tau, \cdots, \tau)$ for the diagonal image of $\tau\in \mathbb H$ in $\mathbb H^{n_0}$,
$$
E^{*, \prime}(\tau^\Delta, 0, \ph) -\mathcal E(\tau, \ph)-\ph(0)\, (\frac{n}2+1)\,\Lambda(0, \chi)\,
\log v
$$
is of exponential decay as  $v$ goes to infinity.
Moreover, for $n >0$
$$
a_n(\ph)  =\sum_p a_{n, p}(\ph) \log p
$$
with $a_{n, p}(\ph) \in \mathbb Q(\ph)$, the subfield of $\mathbb C$
generated by the values $\ph(x)$, $x\in V(\A_f)$.
\end{proposition}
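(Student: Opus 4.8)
The statement is \cite[Proposition 4.6]{BKY12}; here is how I would reconstruct a proof. The plan is to begin from the Langlands--Fourier expansion of the Hilbert Eisenstein series $E(\vec\tau,s,\phi)$. Unfolding along $B_F\backslash\SL_2(F)$ writes its $t$-th Fourier coefficient, for $t\in F^\times$, as $\norm(\vec v)^{-1/2}$ times a product $\prod_v W_{t,v}(g_{\tau_v},s,\Phi_v)$ of local Whittaker integrals, and its constant term as $\Phi(g_{\vec\tau},s)+(M(s)\Phi)(g_{\vec\tau},s)$, where $M(s)$ is the standard intertwining operator on $I(s,\chi)$. After multiplying by $\Lambda(s+1,\chi)$, the incoherence of $\mathcal C$ --- that the local invariants $\epsilon_v(\mathcal C_v)$ multiply to $-1$ --- forces each of these coefficients, including the constant term, to vanish at $s=0$: for every $t$, an odd number of the factors $W_{t,v}(\cdot,0,\Phi_v)$ vanish, namely exactly at the places where $\mathcal C_v$ fails to represent $t$, and in the constant term the factor $\Lambda(s,\chi)/\Lambda(s+1,\chi)$ coming from $M(s)$ together with the archimedean signs does the same. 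Differentiating at $s=0$ and applying the Leibniz rule, $E^{*,\prime}_t(\vec\tau,0,\phi)$ is nonzero only when exactly one local factor vanishes at $s=0$, in which case it equals $\Lambda(1,\chi)$ times the $s$-derivative of that one factor times the values at $s=0$ of all the others.

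Next I would dispose of the constant term. Because each weight-one archimedean section contributes only a factor $\norm(\vec v)^{(s\pm 1)/2}$ and $M(s)$ contributes $\Lambda(s,\chi)/\Lambda(s+1,\chi)$, the constant term of $E^*(\vec\tau,s,\phi)$ has the shape $\phi(0)\norm(\vec v)^{s/2}A(s)+\norm(\vec v)^{-s/2}B(s)$ with $\phi(0)A(0)+B(0)=0$ and, thanks to the normalization built into $\Lambda$, with $A(0)=\Lambda(0,\chi)$; hence its $s$-derivative at $0$ is $\phi(0)\Lambda(0,\chi)\log\norm(\vec v)+a_0(\phi)$, which is exactly the claimed constant term. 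On the diagonal $\tau^\Delta$ one has $\norm(\vec v)=v^{n_0}$, so this becomes $\phi(0)(\tfrac n2+1)\Lambda(0,\chi)\log v+a_0(\phi)$, and a Hilbert Fourier datum $e\big(\sum_i\sigma_i(t)\tau_i\big)$ collapses to $q^{\tr_{F/\Q}(t)}$.

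The core of the argument is the analysis of the $t$-th coefficients. If $t\in F^\times_+$, all the archimedean spaces $\mathcal C_{\sigma_i}$ are positive definite and represent $t$, so every vanishing factor lies at one of the finitely many finite places $\mathfrak p$ at which $\mathcal C_{\mathfrak p}$ does not represent $t$; there the local Whittaker function equals, up to an explicit rational constant, a local representation density, i.e.\ a polynomial in $\norm(\mathfrak p)^{-s}$ with rational coefficients vanishing at $s=0$, whose $s$-derivative is therefore a rational multiple of $\log\norm(\mathfrak p)$. The remaining archimedean factors are holomorphic weight-one Whittaker functions, each a constant times $e(\sigma_i(t)\tau_i)$, and the whole point of the normalization by $\Lambda(s+1,\chi)$ is that its $\pi$- and $\Gamma$-factors absorb exactly those constants while its $L$-factor absorbs the infinite product over the unramified finite places of the Whittaker values, leaving only rational numbers. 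Thus $E^{*,\prime}_t(\vec\tau,0,\phi)=a(t,\phi)\,e\big(\sum_i\sigma_i(t)\tau_i\big)$ with $a(t,\phi)$ a $\mathbb Q(\phi)$-linear combination of finitely many $\log p$. If $t\in F^\times$ is not totally positive: when two or more conjugates of $t$ are negative, at least two archimedean Whittaker functions vanish at $s=0$ and $E^{*,\prime}_t(\vec\tau,0,\phi)=0$; when exactly one conjugate $\sigma_j(t)$ is negative, the sole vanishing factor is the archimedean one at $\sigma_j$, whose $s$-derivative is a non-elementary (incomplete-Gamma type) Whittaker function decaying like $v_j^{-1}e^{-2\pi|\sigma_j(t)|v_j}$; combining this with the exponential decay of the remaining holomorphic archimedean factors shows that on the diagonal the $t$-th term is $O\big(v^{-1}e^{-2\pi(\tr_{F/\Q}(t)+2|\sigma_j(t)|)v}\big)$, and since $|\norm_{F/\Q}(t)|$ is bounded below on the fixed lattice in which $t$ ranges, $\tr_{F/\Q}(t)+2|\sigma_j(t)|$ is bounded below by a positive constant, so these terms sum to $O(e^{-\eps v})$. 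Collecting the three pieces gives the stated asymptotic, and $a_n(\phi)=\sum_{t\gg 0,\ \tr_{F/\Q}(t)=n}a(t,\phi)=\sum_p a_{n,p}(\phi)\log p$ with $a_{n,p}(\phi)\in\mathbb Q(\phi)$, the sums being finite.

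The step I expect to be the main obstacle is the explicit evaluation of the local Whittaker functions $W_{t,v}(\cdot,s,\Phi_v)$ at and near $s=0$ at every place of $F$, ramified places included, together with the verification that the $\Lambda(s+1,\chi)$-normalization genuinely cancels all transcendental archimedean and infinite-Euler-product contributions down to $\mathbb Q(\phi)$-combinations of $\log p$; once the non-holomorphic archimedean Whittaker functions are in hand, the uniform exponential-decay estimate for the non-totally-positive terms is comparatively routine. These local computations are the ones carried out in \cite{BKY12}, following \cite{Schofer}.
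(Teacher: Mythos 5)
Your reconstruction is sound and follows the same route as the source: the paper does not prove this proposition but quotes it from \cite[Proposition 4.6]{BKY12}, whose argument is exactly the one you outline (unfolding into local Whittaker functions, Kudla's incoherence/Diff analysis forcing vanishing at $s=0$, Leibniz rule isolating one local derivative, the $\Lambda(s+1,\chi)$-normalization yielding $\mathbb Q(\ph)$-multiples of $\log p$, and exponential decay of the non--totally-positive terms on the diagonal). The only points left implicit --- the functional equation $\Lambda(0,\chi)=\Lambda(1,\chi)$ and the product of local Weil indices being $1$ in the rationality step --- are exactly the local computations you correctly defer to \cite{BKY12}.
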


\begin{remark} \label{rem:Constant}  There is a minor mistake in \cite[Proposition 4.6]{BKY12}) about the constant. The corrected form is
$$
E_0^{*, \prime}(\vec\tau, 0, \ph)=\ph(0)\Lambda(0, \chi)  \log \norm(\vec v) +a_0(\ph)
$$
(i.e., $a_0(\phi)$ might not be a multiple of $\phi(0)$). Direct calculation gives
$$
E_0^*(\vec\tau, s, \ph) = \ph(0) \Lambda(s+1, \chi) (\norm(\vec v))^{\frac{s}2} -(\norm(\vec v))^{-\frac{s}2} \Lambda(s, \chi) \tilde W_{0, f}(s, \ph)
$$
where     (when $\phi$ is factorizable)
$$
\tilde W_{0, f}(s, \ph)= \prod_{\mathfrak p \nmid \infty} \tilde W_{0, \mathfrak p}(s, \phi_\mathfrak p)
= \prod_{\mathfrak p \nmid \infty} \frac{|A|_\mathfrak p^{-\frac{1}2} L_\mathfrak p(s+1, \chi)}{\gamma(W_\mathfrak p) L_\mathfrak p(s, \chi)}  W_{0, \mathfrak p}(s, \phi_\mathfrak p)
$$
is the product of re-normalized local  Whittaker functions (see (\ref{eq:Whit})). With this notation, one has
\begin{equation} \label{eqa0}
a_0(\phi) = - \tilde{W}_{0, f}'(0, \phi) - 2\phi(0)  \Lambda'(0, \chi).
\end{equation}
\end{remark}

Notice that $a(t, \phi_\mu) =0$ automatically unless $\mu+\hat L$ represents $t$, i.e., $t -Q_F(\mu) \in  \partial_F^{-1}\OO_F$.
The following is a special case of the main CM value formula of Bruinier, Kudla, and Yang (\cite[Theorem 5.2]{BKY12}).

\begin{theorem} \label{theo:BigCM} Let
$$
f(\tau) =\sum_{\mu \in L'/L} f_\mu(\tau) \phi_\mu =\sum c(m, \mu) q^m \phi_\mu \in M_{1-\frac{n}2, \omega_L}^!
$$
with $c(0, 0)=0$, and let $\Psi(z, f)$ be its Borcherds  lifting. Then
$$
-\log| \Psi(Z(W), f) |^4 =C(W, K)\left(  \sum_{\substack {  \mu \in  L'/L,\\ m\ge 0 \\ m \equiv Q(\mu) \pmod 1 }} c(-m, \mu) a_m(\phi_\mu)    \right).
$$
 Here
$$
C(W, K)=\frac{ \deg(Z(W,z_{\sigma_2}^\pm))}{  \Lambda(0, \chi)}.
$$
\end{theorem}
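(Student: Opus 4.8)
\medskip
\noindent\textbf{Proof proposal.} The statement is the special case $c(0,0)=0$ of the general big CM value formula \cite[Theorem 5.2]{BKY12}, and the plan is to deduce it from there after recording the simplifications that this hypothesis produces. First I would observe that when $c(0,0)=0$ the Borcherds lifting $\Psi(z,f)$ has weight $k=c(0,0)/2=0$ by Theorem \ref{BorcherdsLifting}, hence is a meromorphic function on $X_K$; in particular its Petersson metric is trivial, so $\|\Psi(z,h,f)\|_{\Pet}=|\Psi(z,h,f)|$ at every point. Theorem \ref{BorcherdsLifting}(2) then gives the pointwise identity $-\log|\Psi(z,h,f)|^4=\Phi(z,h,f)$; summing over the points of the big CM cycle $Z(W)$ (with their multiplicities, over all Galois conjugates) yields
$$
-\log|\Psi(Z(W),f)|^4=\Phi(Z(W),f), \qquad \Phi(Z(W),f):=\sum_{(z,h)\in Z(W)}\Phi(z,h,f).
$$

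Next I would evaluate $\Phi(Z(W),f)$ by the method of Bruinier--Kudla--Yang (following Schofer \cite{Schofer} and \cite{BY09}). Using the CM decomposition $V_\R\cong\bigoplus_{1\le i\le n_0}W_{\sigma_i}$, the Siegel theta kernel $\theta_L(\tau,z_{\sigma_{n_0}}^\pm,h)$ at the big CM point factors through the positive definite planes $W_{\sigma_i}$ ($i<n_0$) and the negative plane $W_{\sigma_{n_0}}$. Averaging over the CM cycle, i.e. over $T(\Q)\backslash T(\A_f)/K_T$, and applying the Siegel--Weil formula on $\SL_2(\A_F)$ through a see-saw dual pair identity identifies this average with a diagonal restriction of the incoherent Eisenstein series $E(\vec\tau,s,\phi)$ of Section \ref{sect:CMvalue}. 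Incoherence forces $E^*(\tau^\Delta,0,\phi)=0$, so when this is fed into the regularized integral against $f$ the surviving contribution is governed by the derivative $E^{*,\prime}(\tau^\Delta,0,\phi)$. Evaluating the regularized Petersson pairing $\int_{\calF}^{reg}\langle f(\tau),E^{*,\prime}(\tau^\Delta,0)\rangle\,d\mu(\tau)$ by the usual constant-term extraction for a weakly holomorphic $f$ — and identifying, via Proposition \ref{prop4.3}, the holomorphic Fourier coefficients of $E^{*,\prime}$ with the $a_m(\phi_\mu)$, the constant term being corrected as in Remark \ref{rem:Constant} — produces $\sum_{\mu,\,m\ge 0}c(-m,\mu)\,a_m(\phi_\mu)$, scaled by $\deg(Z(W,z_{\sigma_2}^\pm))/\Lambda(0,\chi)=C(W,K)$. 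This is exactly \cite[Theorem 5.2]{BKY12} with $c(0,0)=0$: the terms of the general formula proportional to $c(0,0)$ — in particular $c(0,0)$ times the contribution of the $\log v$ growth of $E^{*,\prime}$, together with the passage through a nontrivial $\|\cdot\|_{\Pet}$ — all drop out, leaving the displayed identity. Note that the formula of \cite{BKY12} is valid for an arbitrary $\phi\in S(V_f)$, not only for factorizable ones, which is what we shall need in Section \ref{sect:Weber}.

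The genuinely hard part is entirely internal to \cite{BKY12}: the see-saw identification of the CM-cycle average of the theta kernel with a value of the incoherent Eisenstein series, the control of the Borcherds regularization through this identification, and the argument extracting the arithmetic coefficients $a_m(\phi_\mu)$ from the derivative of the Eisenstein series. Since we only invoke that theorem in the weight-zero case, what remains to be checked is routine: that $c(0,0)=0$ trivializes the metric, so that $|\Psi(Z(W),f)|$ may legitimately replace $\|\Psi(Z(W),f)\|_{\Pet}$; that the normalizing constant of \cite[Theorem 5.2]{BKY12} is the $C(W,K)$ displayed above (here $n=2$ and $n_0=2$, so $z_{\sigma_{n_0}}^\pm=z_{\sigma_2}^\pm$); and that the $m=0$ term is treated via Remark \ref{rem:Constant}, where $a_0(\phi)=-\tilde W_{0,f}'(0,\phi)-2\phi(0)\Lambda'(0,\chi)$ replaces the constant of \cite[Proposition 4.6]{BKY12}, so that the sum over $m\ge 0$ in the statement is the correct one.
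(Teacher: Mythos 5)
Your proposal is correct and follows the same route as the paper, which states this result precisely as a special case of \cite[Theorem 5.2]{BKY12} without further argument: the only additional observations needed are the ones you make, namely that $c(0,0)=0$ makes the Borcherds lifting have weight zero so the Petersson norm reduces to the absolute value in Theorem \ref{BorcherdsLifting}(2), and that the Fourier coefficients of $E^{*,\prime}$ are packaged as the $a_m(\phi_\mu)$ via Proposition \ref{prop4.3} and Remark \ref{rem:Constant}. Nothing further is required.
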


To compute the $t$-th Fourier coefficient $a(t, \phi)$ of $E^{*, \prime}(\vec \tau, 0, \phi)$, one may assume that $\phi =\otimes \phi_{\mathfrak p}$ is factorizable by linearity.  Write for $t \ne 0$
$$
E_t^*(\vec\tau, s, \phi) =\prod_{\mathfrak p \nmid \infty} W_{t, \mathfrak p}^*(s, \phi) \prod_{j=1}^{n_0} W_{t, \sigma_j}^*(\tau_j, s, \Phi_{\sigma_j}),
$$
where
$$
W_{t, \mathfrak p}^*(s, \phi)= |A|_\mathfrak p^{-\frac{s+1}{2}}  L_\mathfrak p(s+1, \chi_\mathfrak p) W_{t, \mathfrak p}(s, \phi)
$$
for finite prime $\mathfrak p$ with
\begin{equation} \label{eq:Whit}
W_{t, \mathfrak p}(s, \phi)= \int_{F_\mathfrak p} \omega(wn(b))(\phi_\mathfrak p) (0) |a(wn(b))|_\mathfrak p^s\psi_\mathfrak p(-t b) \, db,
\end{equation}
and for infinite prime $\sigma_j$
$$
W_{t, \sigma_j}^*(\tau_j, s, \Phi_{\sigma_j})= v_j^{-1/2} \pi^{-\frac{s+2}2} \Gamma(\frac{s+2}2) \int_{\R} \Phi_{\sigma_j}(w n(b)g_{\tau_j}, s) \psi(-bt) db.
$$
Here $A$ is defined in (\ref{eq:L-series}) and $|a(g)|_\mathfrak p=|a|_\mathfrak p $ if $ g =n(b) m(a) k$ with $k\in \SL_2(\OO_\mathfrak p)$.

The following proposition is well-known and is recorded here for reference. Recall $W=E$ with $Q_F(z) =\alpha z \bar z$, $\alpha \in F^\times$.
\begin{proposition} \label{prop:Whittaker} For a totally positive number $t \in F^+$, let
$$
\hbox{Diff}(W, t) =\{ \mathfrak p :\,  W_{\mathfrak p} \hbox{ does not represent } t\}
$$
be the so-called `Diff' set of Kudla. Then $|\hbox{Diff}(W, t)|$ is finite and odd. Moreover,
\begin{enumerate}
\item If $|\hbox{Diff}(W, t)|>1$, then $a(t,\phi)=0$.

\item If  $\hbox{Diff}(W, t)=\{\mathfrak p\}$,  then $W_{t, \mathfrak p}^*(0,\phi)=0$, and
$$
a(t, \phi) =(-2i)^{n_0}  W_{t, \mathfrak p}^{*, \prime} (0,\phi) \prod_{\mathfrak q \nmid \mathfrak p\infty} W_{t, \mathfrak q}^*(0, \phi).
$$

\item When $\mathfrak p \nmid \alpha A$ is unramified in $E/F$,  and $\phi_\mathfrak p=\cha(\OO_{E_\mathfrak p})$, $W_{t, \mathfrak p}^*(s, \phi)=0$ unless $t \in \partial_F^{-1} $. In this case, one has
$$
\frac{W_{t, \mathfrak p}^*(0, \phi)}{\gamma(W_\mathfrak p)} = \begin{cases}
  1+ \ord_\mathfrak p(t\sqrt{D} ) &\ff \mathfrak p \hbox{ is split in } E,
  \\
  \frac{1+(-1)^{\ord_{\mathfrak p}(t\sqrt{D} )}}2 &\ff  \mathfrak p \hbox{ is inert in } E.
  \end{cases}
$$
Here $\gamma(W_{\mathfrak p})$ is the local Weil index (a $8$-th root of unity) associated to the Weil representation.
Moreover,  in this case,  $W_{t, \mathfrak p }^*(0, \phi)=0$ if and only if  $\ord_{\mathfrak p} (t\sqrt{D}) $ is odd and $\mathfrak p$ is inert in $E$.  In such a case, one has
$$
\frac{W_{t, \mathfrak p}^{*, \prime}(0, \phi)}{{\gamma(W_\mathfrak p)}} = \frac{1+\ord_\mathfrak p (t\sqrt{D}) }2 \log \norm(\mathfrak p).
$$

\item  One has for $1 \le j \le n_0$
$$
W_{t, \sigma_j}^*(\tau, 0, \Phi_{\sigma_j})=-2i e(t\tau),   \quad t >0
$$
and
$$
W_{ 0, \sigma_j}^*(\tau, s, \Phi_{\sigma_j}) = -i \pi^{-\frac{s+1}2} \Gamma(\frac{s+1}2) v^{-\frac{s}2}.
$$
\end{enumerate}
\end{proposition}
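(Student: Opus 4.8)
The plan is to assemble the proposition from the local theory of Whittaker functions attached to the Weil representation of a binary quadratic space; each assertion is either classical or already contained in \cite{BKY12}, \cite{Kudla97}, \cite{Kudla03}, so the work is one of organizing known local computations and tracking normalizations. First I would dispose of the finiteness and oddness of $\hbox{Diff}(W,t)$. For a prime $\mathfrak p$ unramified in $E/F$ with $\mathfrak p\nmid 2\alpha$, the lattice $\OO_{E_\mathfrak p}$ represents every unit of $F_\mathfrak p$, hence every nonzero $t$, so only finitely many $\mathfrak p$ can lie in $\hbox{Diff}(W,t)$. For the parity one invokes the incoherence of $\mathcal C$: whether the binary space $W_v$ represents a fixed $t\ne 0$ is governed by the local Hasse/Weil invariant of $W_v$ together with $\chi_v$, and the product formula for those invariants forces the number of places $v$ at which $W_v$ fails to represent $t$ to be odd for an incoherent $\mathcal C$. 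Since $t$ is totally positive and every archimedean component of $\mathcal C$---including the flipped one at $\sigma_{n_0}$---is positive definite, no archimedean place lies in that set, so it coincides with $\hbox{Diff}(W,t)$ as defined.

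For (1) and (2) I would start from the factorization $E_t^*(\vec\tau,s,\phi)=\prod_{\mathfrak p\nmid\infty}W_{t,\mathfrak p}^*(s,\phi)\prod_{j=1}^{n_0}W_{t,\sigma_j}^*(\tau_j,s,\Phi_{\sigma_j})$ recorded above, together with the basic local fact that $W_{t,\mathfrak p}^*(0,\phi)\ne 0$ exactly when $W_\mathfrak p$ represents $t$---at $s=0$ the local Whittaker function is a nonzero multiple of the local representation density, which is positive when $t$ is represented and vanishes otherwise. If $|\hbox{Diff}(W,t)|\ge 2$, at least two of the finite factors vanish at $s=0$, so $E_t^*$ vanishes there to order $\ge 2$ and its $s$-derivative at $0$, namely $a(t,\phi)$, is $0$. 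If $\hbox{Diff}(W,t)=\{\mathfrak p\}$, only the factor $W_{t,\mathfrak p}^*$ vanishes at $s=0$, so the Leibniz rule gives that the $s$-derivative of $E_t^*$ at $0$ equals $W_{t,\mathfrak p}^{*,\prime}(0,\phi)\prod_{\mathfrak q\nmid\mathfrak p\infty}W_{t,\mathfrak q}^*(0,\phi)$ times $\prod_{j=1}^{n_0}W_{t,\sigma_j}^*(\tau_j,0,\Phi_{\sigma_j})$; by (4) each archimedean factor equals $-2i\,e(t\tau_j)$ and $\prod_j e(t\tau_j)$ is precisely the $t$-th Fourier monomial, so extracting the $t$-th Fourier coefficient produces $a(t,\phi)=(-2i)^{n_0}W_{t,\mathfrak p}^{*,\prime}(0,\phi)\prod_{\mathfrak q\nmid\mathfrak p\infty}W_{t,\mathfrak q}^*(0,\phi)$, and in particular $W_{t,\mathfrak p}^*(0,\phi)=0$.

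For (3) I would evaluate the integral \eqref{eq:Whit} directly when $\mathfrak p\nmid\alpha A$ is unramified in $E/F$ and $\phi_\mathfrak p=\cha(\OO_{E_\mathfrak p})$: writing out $\omega(wn(b))\phi_\mathfrak p(0)$ turns the integral into a geometric-type series in $|\pi_\mathfrak p|^s$ whose value and $s$-derivative at $0$ are the standard local densities, yielding $1+\ord_\mathfrak p(t\sqrt D)$ when $\mathfrak p$ splits and $\tfrac{1+(-1)^{\ord_\mathfrak p(t\sqrt D)}}2$ when $\mathfrak p$ is inert, with $\gamma(W_\mathfrak p)$ arising from the Gauss sum in the $\omega(w)$-action; in the inert case with $\ord_\mathfrak p(t\sqrt D)$ odd the density is $0$ and differentiating gives $\tfrac{1+\ord_\mathfrak p(t\sqrt D)}2\log\norm(\mathfrak p)$. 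For (4) one computes $\int_\R\Phi_{\sigma_j}(wn(b)g_\tau,s)\psi(-bt)\,db$ for the weight-one section attached to $\chi_{\C/\R}$; this is a standard confluent hypergeometric integral, holomorphic in $\tau$, which for $t>0$ collapses after the $\Gamma$-normalization to $-2i\,e(t\tau)$ and for $t=0$ to the displayed ratio of $\Gamma$-factors.

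The one genuine difficulty---and where I expect the care to be needed, rather than in any conceptual step---is the bookkeeping of normalizations: matching the normalized factors $W_{t,v}^*$, the Weil indices $\gamma(W_\mathfrak p)$, the archimedean constants $-2i$ and $-i$, and the interpretation of $\sqrt D$ in terms of $\partial_F d_{E/F}$, so that the global product comes out with exactly the constants stated.
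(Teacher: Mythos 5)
Your proposal is correct and follows essentially the same route as the paper's own (sketched) proof: finiteness and oddness of $\Diff(W,t)$ via Kudla's incoherence and the product formula for local invariants, parts (1)--(2) via the Euler factorization of $E_t^*$, the vanishing of $W_{t,\mathfrak p}^*(0,\phi)$ at places of $\Diff$, the Leibniz rule, and the archimedean evaluation in (4), and parts (3)--(4) by the standard local Whittaker computations, which the paper handles by rescaling $Q_F$ by $\sqrt D$ (replacing $\psi_F$ by $\psi_F(\cdot/\sqrt D)$) and citing \cite{Yang05} and \cite{KRYtiny}. One small caution: your claim that $W_{t,\mathfrak q}^*(0,\phi)\neq 0$ exactly when $W_\mathfrak q$ represents $t$ overstates the local fact (the value can also vanish for support reasons, e.g.\ when $t\notin\partial_F^{-1}$ in part (3)), but since only the vanishing direction and the Leibniz identity are actually used, this does not affect the validity of your argument.
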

\begin{proof} (sketch) The Diff set is first defined by Kudla in \cite{KuAnnals}. In our case, the incoherent collection of $F_v$-quadratic spaces is $\{\mathcal C_v\}$ where $\mathcal C_v =W_v$ for $v \ne \sigma_{n_0}$ and $\mathcal C_{n_0}$ positive definite.  The archimedian places are not in the Diff set as $t$ is totally positive.  Let $\psi_F'(x)=\psi_F(\frac{x}{\sqrt D})$ and $W'=W$ with $F$-quadratic form $Q_F'(x) =\sqrt D Q_F(x) =x \bar x$. Then
 one has as Weil representations on  $S(W_f) =S(W_f')$:
 $$
 \omega_{W, \psi_F} =\omega_{W', \psi_F'},
 $$
 and thus the Whittaker functions have the following relation
 $$
W_{t, \mathfrak p}^{\psi_F}(s, \phi) = |\sqrt D|_{\mathfrak p}^{\frac{1}2} W_{t\sqrt{D}, \mathfrak p}^{\psi'}(s, \phi)
$$
for each prime $\mathfrak p$ of $F$.
 Recall that $W_{t, \mathfrak p}^*(0, \phi)=0$ if $\mathfrak p \in  \Diff(W, t)$. So (1) is obvious.
 Claim (3) follows from \cite[Proposition 2.1]{Yang05}. Claim  (4)  is a special case of \cite[Proposition 2.6]{KRYtiny}. Claim (2) follows from
 $$
 E_t^*(\vec \tau, s, \phi)= \prod_{\mathfrak p \nmid \infty} W_{t, \mathfrak p}^*(s, \phi)  \prod_{j=1}^{n_0}W_{t, \sigma_j}^*(\tau_j, s, \Phi_{\sigma_j})
 $$
and (4).

\end{proof}

\section{Product of modular curves and its diagonal divisor} \label{sect:BigCM} \label{sect:Product}

\subsection{Product of modular curves as a Shimura variety of orthogonal type $(2, 2)$} \label{sect2.1}
Let $N$ be a positive integer, and
Let $V=M_2(\Q)$ with the quadratic form  $Q(X) =N\det X$.
 Let $H$ be the algebraic group over $\Q$
$$
H =\{ (g_1, g_2) \in \GL_2 \times \GL_2 :\, \det g_1 =\det g_2\}.
$$
Then $H\cong \Gspin(V)$ and acts on $V$ via
$$
(g_1, g_2) X = g_1 X g_2^{-1}.
$$
One has the exact sequence
$$
1 \rightarrow  \mathbb G_m \rightarrow H \rightarrow \SO(V) \rightarrow 1.
$$
Recall the Hermitian symmetric domain $\mathbb D$ and the tautological line bundle $\mathcal L$  in Section  \ref{sect:Borcherds}. For a tube domain,
take an isotropic matrix $\ell= \kzxz{0}{-1}{0}{0} \in L$ and $\ell' =\kzxz {0} {0} {\frac{1}N} {0} \in V$ with $(\ell, \ell')=1$. Then the associated tube domain is
$$
\mathcal H_{\ell, \ell'} =\{ \kzxz {z_1} {0} {0} {-z_2} : \,  y_1 y_2  >0\} ,\quad y_i=\hbox{Im}(z_i),
$$
together with
$$
w: \mathcal H_{\ell, \ell'} \rightarrow \mathcal L,  \quad w( \kzxz {z_1} {0} {0} {-z_2}) = \kzxz {z_1} {-N z_1 z_2} { \frac{1}{N}} { -z_2}.
$$
Now the following proposition is clear.
\begin{proposition} \label{prop3.1} Define
$$
w_N: \,  \H^2 \cup (\H^-)^2  \rightarrow \mathcal L, \quad w_N(z_1, z_2) = \kzxz {z_1} {-z_1 z_2} {1} {-z_2} = Nw( \kzxz {\frac{z_1}{N}} {0} {0} {-\frac{z_2}N}) ,
$$
and
$$
pr:  \mathcal L \rightarrow \mathbb D,  \quad x + iy \mapsto z=\R x + \R (-y).
$$
Then $pr$ gives an isomorphism between $\mathcal L/\C^\times$ and $\mathbb D$, and the composition $pr \circ w$ gives an isomorphism between $\H^2 \cup (\H^-)^2$ and $\mathbb D$. Moreover, $w_N$ is $H$-equivariant, where $H\subset \GL_2(\R) \times \GL_2(\R)$ acts on $\H^2 \cup (\H^-)^2$ via  the usual linear fraction transformations:
$$
(g_1, g_2)(z_1, z_2) =(g_1(z_1), g_2(z_2)),
$$
and acts on $\mathcal L$ and $\mathbb D$ naturally via its action on $V$. Moreover, one has

\begin{equation} \label{eq:linebundle}
(g_1, g_2) w_N(z_1, z_2) = \frac{(c_1 z_1+d_1)(c_2 z_2 +d_2)}{\nu(g_1, g_2)} w_N(g_1(z_1), g_2(z_2)),
\end{equation}
where $\nu(g_1, g_2) =\det g_1 =\det g_2$ is the spin character of $H=\Gspin(V)$. So
$$
j(g_1,g_2, z_1, z_2) = (c_1 z_1+d_1)(c_2 z_2 +d_2).
$$
\end{proposition}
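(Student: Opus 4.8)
The plan is to reduce the statement to the general tube-domain formalism recalled in Section~\ref{sect:Borcherds}, supplemented by one short matrix identity. First I would pin down the tube domain $\mathcal H_{\ell,\ell'}$ explicitly for the chosen $\ell=\kzxz{0}{-1}{0}{0}$, $\ell'=\kzxz{0}{0}{1/N}{0}$: the span $\Q\ell+\Q\ell'$ is the space of off-diagonal matrices, so $V_0=(\Q\ell+\Q\ell')^\perp$ is the space of diagonal matrices; writing a point of $V_{0,\C}$ as $\kzxz{z_1}{0}{0}{-z_2}$ with $z_j=x_j+iy_j$, its imaginary part is $\kzxz{y_1}{0}{0}{-y_2}$ and $Q$ of that matrix equals $-Ny_1y_2$, so the defining inequality $Q(\mathrm{Im})<0$ of $\mathcal H_{\ell,\ell'}$ reads $y_1y_2>0$. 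Thus $(z_1,z_2)\mapsto \kzxz{z_1/N}{0}{0}{-z_2/N}$ is a biholomorphism $\H^2\cup(\H^-)^2\xrightarrow{\ \sim\ }\mathcal H_{\ell,\ell'}$, and applying $w=w_{\ell,\ell'}$ (whose formula from Section~\ref{sect:Borcherds} gives $w(\kzxz{z_1/N}{0}{0}{-z_2/N})=\tfrac{1}{N}\kzxz{z_1}{-z_1z_2}{1}{-z_2}$, since $Q(\ell')=0$) and clearing the harmless scalar $N$ produces exactly $w_N$. Since Section~\ref{sect:Borcherds} already supplies the isomorphisms $\mathrm{pr}\colon\mathcal L/\C^\times\xrightarrow{\ \sim\ }\D$ and $w\colon\mathcal H_{\ell,\ell'}\xrightarrow{\ \sim\ }\mathcal L/\C^\times$, composing with the biholomorphism above yields the asserted isomorphism $\H^2\cup(\H^-)^2\cong\D$. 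To keep this self-contained one may instead check directly that $w_N(z_1,z_2)\in\mathcal L$: it is a rank-one matrix so $Q(w_N)=N\det w_N=0$, and the polarization $(X,Y)=N(a_Xd_Y+a_Yd_X-b_Xc_Y-b_Yc_X)$ of $Q$ gives $\big(w_N(z_1,z_2),\overline{w_N(z_1,z_2)}\big)=N(z_1-\bar z_1)(z_2-\bar z_2)=-4Ny_1y_2<0$ precisely on $\H^2\cup(\H^-)^2$.

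The heart of the proof is the equivariance identity \eqref{eq:linebundle}, and the key observation is the rank-one factorization
\[
w_N(z_1,z_2)=\begin{pmatrix}z_1\\1\end{pmatrix}\begin{pmatrix}1&-z_2\end{pmatrix}.
\]
Writing $g_i=\kzxz{a_i}{b_i}{c_i}{d_i}$, I would use $g_1\begin{pmatrix}z_1\\1\end{pmatrix}=(c_1z_1+d_1)\begin{pmatrix}g_1(z_1)\\1\end{pmatrix}$ together with $g_2^{-1}=(\det g_2)^{-1}\kzxz{d_2}{-b_2}{-c_2}{a_2}$, which gives $\begin{pmatrix}1&-z_2\end{pmatrix}g_2^{-1}=\dfrac{c_2z_2+d_2}{\det g_2}\begin{pmatrix}1&-g_2(z_2)\end{pmatrix}$. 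Since $(g_1,g_2)$ acts on $V$ by $X\mapsto g_1Xg_2^{-1}$ and $\nu(g_1,g_2)=\det g_1=\det g_2$, multiplying these two identities gives \eqref{eq:linebundle} immediately; in particular $w_N$ is $H$-equivariant up to the scalar $(c_1z_1+d_1)(c_2z_2+d_2)\nu(g_1,g_2)^{-1}$, hence descends to an $H$-equivariant isomorphism onto $\D$. Finally, comparing \eqref{eq:linebundle} with the defining relation $\gamma w(z)=\nu(\gamma)j(\gamma,z)w(\gamma z)$ of the automorphy factor from Section~\ref{sect:Borcherds} reads off $j(g_1,g_2,z_1,z_2)=(c_1z_1+d_1)(c_2z_2+d_2)$.

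I do not anticipate any real difficulty; the proposition is essentially bookkeeping against Section~\ref{sect:Borcherds} plus the two-line computation above. The only things to watch are the sign in the second coordinate (so that $\mathcal H_{\ell,\ell'}$ matches $\H^2\cup(\H^-)^2$ and not $\H\times\H^-$) and the normalization factor $N$ relating $w_N$ to $w_{\ell,\ell'}$; once the rank-one factorization is written down, the equivariance identity and the formula for $j$ fall out with no effort.
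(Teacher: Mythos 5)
Your verification is correct and is exactly the routine check the paper leaves implicit (the paper simply declares the proposition "clear" after setting up $\ell$, $\ell'$, the tube domain, and $w$): you identify $\mathcal H_{\ell,\ell'}$ with $\H^2\cup(\H^-)^2$, relate $w_N$ to $Nw$, and verify \eqref{eq:linebundle} by the rank-one factorization $w_N(z_1,z_2)=\binom{z_1}{1}(1\ {-z_2})$, which is the natural computation. The only caveat, shared with the paper's own phrasing, is that reading off $j(g_1,g_2,z_1,z_2)=(c_1z_1+d_1)(c_2z_2+d_2)$ from \eqref{eq:linebundle} and $\gamma w(z)=\nu(\gamma)j(\gamma,z)w(\gamma z)$ literally gives an extra factor $\nu(\gamma)^{-2}$, which is harmless since on $\Gamma=K\cap H(\Q)^+$ one has $\nu=1$.
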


For a congruence subgroup $\Gamma$ of $\SL_2(\Z)$, let $X_\Gamma$ be the associated open modular curve over $\Q$ such that $X_\Gamma(\C) =\Gamma \backslash \H$.
Assume $\Gamma \supset \Gamma(M)$ for some integer $M \ge 1$.
Let
$$
\nu:  \A^\times \hookrightarrow \GL_2(\A), \quad \nu(d) = \diag(1, d).
$$
Let
$K(\Gamma)$ be the product of $\nu(\hat{\Z}^\times)$ and the preimage of $\Gamma/\Gamma(M)$ in $\GL_2(\hat\Z)$ (under the map $\GL_2(\hat\Z)\rightarrow \GL_2(\Z/M))$. Let $K=(K(\Gamma)\times K(\Gamma))\cap H(\A_f)$. Then    one has by the strong approximation theorem
$$
X_K\cong X_\Gamma \times X_\Gamma.
$$
In this way, we have identified the product of two copies of a modular curve $X_\Gamma$  with a Shimura variety $X_K$. We will fix this $K$ for a given congruence subgroup $\Gamma$ in this paper.
 The tautological line bundle $\mathcal L$ descends to a line bundle $\mathcal L_K=K\backslash \mathcal L$ of modular forms of 2 variables of weight $(1, 1)$ by (\ref{eq:linebundle}).

Let $L$ be an even integral lattice of $V$, and let $L'$ be its dual with respect to the quadratic form $Q$. We assume that $\Gamma \times \Gamma$ acts on $L'/L$ trivially. Then for $\mu \in L'/L$ and a rational number $m>0$ (and $m \equiv Q(\mu) \pmod 1$), the associated special divisor $Z(m, \mu)=Z_K(m, \mu)$ is given in this special case by
\begin{equation}\label{divisor}
Z(m, \mu) = (\Gamma \times \Gamma) \backslash \{ (z_1, z_2) \in  \H^2:\, w_N(z_1, z_2) \perp x \hbox{ for some } x \in  \mu+L, Q(x)=m\}.
\end{equation}
Alternatively,  $Z(m, \mu)$ is the sum of $Z(x)$, where $x \in  \mu+L$ with $Q(x)=m$ modulo  the action of $\Gamma \times \Gamma$. Here $Z(x)$ is the subvariety of $X_K$ given by $x^\perp$ (of signature $(1, 2)$):
$$
Z(x) = ( \Gamma \cap x^{-1} \Gamma x)\backslash \H  \cong  (\Gamma \times \Gamma)_x \backslash \{ (x z, z): \, z \in \H\}, \quad  [z] \mapsto [xz, z].
$$
The linear combinations of these divisors $Z(m, \mu)$ are called the special divisors of $X_K$.

\begin{lemma} \label{lem:diagonal} Let $\Gamma =\Gamma(N)$ and $L=M_2(\Z)$ with $Q(X) =N \det X$. For each $\gamma \in \SL_2(\Z)$, let
$$
Z_N(\gamma) = \{(\gamma z,  z) \in X_{\Gamma(N)} \times X_{\Gamma(N)}:\,  z \in X_{\Gamma(N)}\} \subset X_{\Gamma(N)} \times X_{\Gamma(N)}.
$$
Then $Z_N(\gamma)=Z(\frac{1}{N}, \frac{1}N\gamma +L)$ is a special divisor of $X_K$.
\end{lemma}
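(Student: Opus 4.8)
The plan is to unwind definition~(\ref{divisor}) in this concrete situation. Once the equality $Z_N(\gamma)=Z(\tfrac1N,\tfrac1N\gamma+L)$ is proved, the assertion that $Z_N(\gamma)$ is a special divisor of $X_K$ is immediate, since the right-hand side is by definition one of the divisors $Z(m,\mu)$.

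First I would pin down the dual lattice and the relevant value of $Q$. For $2\times2$ matrices one has $\det(X+Y)=\det X+\det Y+\tr\!\big(X\operatorname{adj}Y\big)$, where $\operatorname{adj}\kabcd=\kzxz{d}{-b}{-c}{a}$; hence the bilinear form attached to $Q(X)=N\det X$ is $(X,Y)=N\,\tr\!\big(X\operatorname{adj}Y\big)$. Since $\tr(X\operatorname{adj}Y)$ reads off, as $X$ runs over $M_2(\Z)$, all entries of $\operatorname{adj}Y$, the condition $(X,Y)\in\Z$ for all $X\in L=M_2(\Z)$ is equivalent to $\operatorname{adj}Y\in\tfrac1N M_2(\Z)$, hence --- as $\operatorname{adj}$ is an involution on $M_2(\Q)$ --- to $Y\in\tfrac1N M_2(\Z)$. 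Thus $L'=\tfrac1N M_2(\Z)=\tfrac1N L$, so $\tfrac1N\gamma\in L'$ and $\mu:=\tfrac1N\gamma+L\in L'/L$ is well defined; it depends only on the $\Gamma(N)$-coset of $\gamma$, because $h-I\in NM_2(\Z)$ for $h\in\Gamma(N)$. Moreover $Q(\tfrac1N\gamma)=N\det(\tfrac1N\gamma)=\tfrac1N$, so $m=\tfrac1N\equiv Q(\mu)\pmod1$. I would also record the routine check that $\Gamma(N)\times\Gamma(N)$ acts trivially on $L'/L$, so that (\ref{divisor}) applies to the pair $(L,\Gamma(N))$.

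Next I would compute the locus in (\ref{divisor}). If $x\in\mu+L$ satisfies $Q(x)=\tfrac1N$, then $x=\tfrac1N g$ with $g\in M_2(\Z)$, $g\equiv\gamma\pmod N$, and $\det g=1$; since $\det g\equiv\det\gamma=1\pmod N$ is automatic, the set $\{x\in\mu+L:Q(x)=\tfrac1N\}$ equals $\tfrac1N\Gamma(N)\gamma$ (because $g\gamma^{-1}\in\SL_2(\Z)$ and $g\gamma^{-1}\equiv I\pmod N$, so $g\gamma^{-1}\in\Gamma(N)$). Writing $w_N(z_1,z_2)$ as the rank one matrix $\left(\begin{smallmatrix}z_1\\1\end{smallmatrix}\right)\left(\begin{smallmatrix}1&-z_2\end{smallmatrix}\right)$, the formula for the bilinear form gives, for $g=\kabcd\in\SL_2(\Z)$, the value $\big(w_N(z_1,z_2),\tfrac1N g\big)=\tr\!\big(w_N(z_1,z_2)\operatorname{adj}g\big)=(cz_2+d)z_1-(az_2+b)$, which vanishes exactly when $z_1=\tfrac{az_2+b}{cz_2+d}=gz_2$ (here $cz_2+d\neq0$ since $z_2\in\H$). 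Hence the set occurring in (\ref{divisor}) equals $\{(z_1,z_2)\in\H^2:z_1=gz_2\text{ for some }g\in\Gamma(N)\gamma\}=(\Gamma(N)\times\{1\})\cdot\{(\gamma z,z):z\in\H\}$, so dividing by $\Gamma(N)\times\Gamma(N)$ gives precisely the image of $\{(\gamma z,z):z\in\H\}$ in $X_{\Gamma(N)}\times X_{\Gamma(N)}$, i.e.\ $Z_N(\gamma)$. (Equivalently, via the description $Z(m,\mu)=\sum_xZ(x)$: the action $(g_1,g_2)x=g_1xg_2^{-1}$ is transitive on $\tfrac1N\Gamma(N)\gamma$ by normality of $\Gamma(N)$, leaving a single term $Z(\tfrac1N\gamma)$, and $\Gamma(N)\cap(\tfrac1N\gamma)^{-1}\Gamma(N)(\tfrac1N\gamma)=\gamma^{-1}\Gamma(N)\gamma=\Gamma(N)$ identifies it with $Z_N(\gamma)$.)

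The whole argument is bookkeeping, and I do not expect a genuine obstacle. The one place needing care is the previous paragraph: correctly describing the finite set of $x\in\mu+L$ with $Q(x)=\tfrac1N$ as the single $\Gamma(N)$-coset $\tfrac1N\Gamma(N)\gamma$, matching the perpendicularity condition $w_N(z_1,z_2)\perp x$ with the graph $\{z_1=\gamma z_2\}$, and checking in the passage to the quotient that the various $g\in\Gamma(N)\gamma$ giving $(\Gamma(N)\times\Gamma(N))$-equivalent points do not change the divisor.
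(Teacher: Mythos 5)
Your proof is correct and follows essentially the same route as the paper: the key step in both is showing that every $x\in\tfrac1N\gamma+L$ with $Q(x)=\tfrac1N$ satisfies $Nx\in\Gamma(N)\gamma$, so the special divisor is the image of the single graph $\{(\gamma z,z)\}$. Your additional computations (the dual lattice $L'=\tfrac1N M_2(\Z)$ and the explicit vanishing of $(w_N(z_1,z_2),\tfrac1N g)$ exactly on $z_1=gz_2$) simply make explicit what the paper leaves implicit in its description of $Z(x)$ as $(\Gamma\times\Gamma)_x\backslash\{(xz,z)\}$.
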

We denote by  $X(N)$  the compactification of $X_{\Gamma(N)}$ (to be compatible with usual definition of $X(N)$)
\begin{proof} If $x \in \frac{1}N\gamma +L$ with $Q(x) =N \det x =1/N$, then $Nx \in \gamma +N L$ and $\det (N x)=1$. So $Nx \in \SL_2(\Z)$, and
$Nx \gamma^{-1}=\gamma_1 \in \Gamma(N)$, and $ x = \gamma_1 (\frac{1}N \gamma)$. This implies
$$Z(\frac{1}{N}, \frac{1}N\gamma +L)=Z_N(\gamma).$$

\end{proof}

\begin{corollary} \label{cor:diagonal} Let $X_\Gamma^{\Delta}$ be the diagonal embedding of $X_\Gamma$ into $X_\Gamma \times X_\Gamma$. The $X_\Gamma^\Delta$ is a special divisor of $X_\Gamma \times X_\Gamma$  in the following sense.
 Assume   $\Gamma \supset \Gamma(N)$, we take $L=M_2(\Z)$ with $Q(X) =N \det$. Then the preimage of $X_\Gamma^\Delta$ in $X_{\Gamma(N)} \times X_{\Gamma(N)}$ is equal to
$$
\sum_{\gamma \in \Gamma/\Gamma(N) } Z_N(\gamma)
$$
in the notation of Lemma \ref{lem:diagonal}.
\end{corollary}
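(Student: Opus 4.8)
The plan is to pull the reduced divisor $X_\Gamma^\Delta$ back along the finite covering $\pi\colon X_{\Gamma(N)}\times X_{\Gamma(N)}\to X_\Gamma\times X_\Gamma$ induced by $\Gamma(N)\subset\Gamma$, and then to invoke Lemma~\ref{lem:diagonal} to recognize the irreducible components of the pullback as special divisors. First I would compute the preimage set-theoretically: a point $(\Gamma(N)z_1,\Gamma(N)z_2)$ maps into the diagonal $X_\Gamma^\Delta$ precisely when $\Gamma z_1=\Gamma z_2$, i.e.\ when $z_1=\gamma z_2$ for some $\gamma\in\Gamma$. Hence, with $Z_N(\gamma)=\{(\Gamma(N)\gamma z,\Gamma(N)z):z\in\H\}$ as in Lemma~\ref{lem:diagonal},
\[
\pi^{-1}\bigl(X_\Gamma^\Delta\bigr)=\bigcup_{\gamma\in\Gamma}Z_N(\gamma)
\]
as subsets of $X_{\Gamma(N)}\times X_{\Gamma(N)}$.

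Next I would organize this union and pin down the multiplicities. Because $\Gamma(N)$ is normal in $\SL_2(\Z)$, for $\delta\in\Gamma(N)$ we have $\gamma\delta=(\gamma\delta\gamma^{-1})\gamma$ with $\gamma\delta\gamma^{-1}\in\Gamma(N)$, so $\Gamma(N)\gamma\delta z=\Gamma(N)\gamma z$; thus $Z_N(\gamma)$ depends only on the coset $\gamma\Gamma(N)$, and conversely comparing the two parametrizations at a point $z\in\H$ that is neither elliptic for $\Gamma$ nor a cusp shows that $Z_N(\gamma)=Z_N(\gamma')$ forces $\gamma'\in\Gamma(N)\gamma$. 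Each $Z_N(\gamma)$ is irreducible: it is the image of the map $z\mapsto(\Gamma(N)\gamma z,\Gamma(N)z)$, which by normality of $\Gamma(N)$ factors through $\Gamma(N)\backslash\H=X_{\Gamma(N)}$ and is injective there — this is the special case $\Gamma(N)\cap\gamma^{-1}\Gamma(N)\gamma=\Gamma(N)$ of the formula for $Z(x)$ in Section~\ref{sect2.1}. Finally, since $X_{\Gamma(N)}\to X_\Gamma$ is étale away from the finitely many elliptic points and cusps, $\pi$ is étale over the generic point of $X_\Gamma^\Delta$, so the pullback of the reduced divisor $X_\Gamma^\Delta$ is again reduced; hence $\pi^{-1}(X_\Gamma^\Delta)$ is the reduced sum of the distinct curves $Z_N(\gamma)$. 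Lemma~\ref{lem:diagonal}, applied with $L=M_2(\Z)$ and $Q=N\det$, identifies $Z_N(\gamma)=Z(\tfrac1N,\tfrac1N\gamma+L)$, so the preimage equals the special divisor $\sum_{\gamma\in\Gamma/\Gamma(N)}Z(\tfrac1N,\tfrac1N\gamma+L)$ of $X_K=X_{\Gamma(N)}\times X_{\Gamma(N)}$, which is the asserted sense in which $X_\Gamma^\Delta$ is a special divisor.

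The hard part is the bookkeeping around $-I$: since $-I$ acts trivially on $\H$ one has $Z_N(\gamma)=Z_N(-\gamma)$, so when $-I\in\Gamma\setminus\Gamma(N)$ the two cosets $\pm\gamma\Gamma(N)$ contribute the same curve and the sum $\sum_{\gamma\in\Gamma/\Gamma(N)}$ must be read modulo this identification (equivalently, as a sum over $\pm\Gamma$ modulo $\pm\Gamma(N)$). This situation does not arise in the cases used in this paper, where either $N\le 2$ (so $-I\in\Gamma(N)$) or $\Gamma$ is torsion-free; and in general the correct reading is forced by comparing degrees — writing $d$ for the degree of $X_{\Gamma(N)}\to X_\Gamma$, the covering $\pi$ has degree $d^2$ and each $Z_N(\gamma)\to X_\Gamma^\Delta$ has degree $d$, so exactly $d$ distinct curves occur. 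A further routine point I would record is that the classes $\tfrac1N\gamma+L\in L'/L$, which correspond to $\gamma\bmod N\in\SL_2(\Z/N)$, separate the cosets $\gamma\Gamma(N)$, so the divisors appearing in the sum are correctly distinguished.
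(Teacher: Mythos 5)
Your proof is correct and takes essentially the route the paper leaves implicit: the corollary is stated without proof as an immediate consequence of Lemma \ref{lem:diagonal}, the content being exactly your first two steps — the set-theoretic preimage of the diagonal is the union of the graphs $Z_N(\gamma)$, which depend only on the coset $\gamma\Gamma(N)$ and are special divisors by the lemma, with multiplicity one coming from the fact that the covering is unramified at the generic point of the diagonal. Your closing caveat about $-I$ is a correct refinement of the statement itself rather than a defect of your argument: when $-I\in\Gamma\setminus\Gamma(N)$ (possible only for $N\ge 3$, hence not in the cases $N=1,2$ actually used in the paper) one has $Z_N(\gamma)=Z_N(-\gamma)$, so the displayed sum over $\Gamma/\Gamma(N)$ counts each component twice and must be read over $\pm\Gamma/\pm\Gamma(N)$, exactly as you indicate via the degree count $d^2=d\cdot d$.
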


\subsection{Products of CM cycles as  big CM cycles}
For $j=1,2$, let $\kk_j =\Q(\sqrt d_j)$ with ring of integers $\OO_j=\Z[\frac{d_j +\sqrt{d_j}}2]$ of discriminant $d_j<0$  with $(d_1, d_2)=1$. In this subsection, we describe how to view a pair of CM points $(\tau_1, \tau_2) \in X_\Gamma \times X_\Gamma$ associated to $\kk_1$ and $\kk_2$  as a big CM  point in $X_K$ in the sense of \cite{BKY12}.
For this purpose,  let $\kk =\kk_1 \otimes_\Q  \kk_2=\Q(\sqrt{d_1}, \sqrt{d_2})$ with ring of integers $\OO_E =\OO_1 \otimes_\Z \OO_2$. Then $\kk$ is a biquadratic  CM  number field with  real quadratic subfield $F=\Q (\sqrt D)$ and $D=d_1d_2$.

 We define $W= \kk$ with the $F$-quadratic form $Q_F(x) =\frac{ Nx \bar x}{\sqrt D}$. Let $W_\Q =W$ with the $\Q$-quadratic form $Q_\Q(x) =\tr_{F/\Q} Q_F(x)$.  Let $\sigma_1=1$ and $\sigma_2 =\sigma$ be two real embeddings of $F$ with $\sigma_j(\sqrt D) =(-1)^{j-1}\sqrt D$. Then
 $W$ has signature $(0, 2)$ at $\sigma_2$ and $(2, 0)$  at $\sigma_1$ respectively, and so $W_\Q$ has signature $(2,2)$.
 Choose a $\Z$-basis of $\OO_E$ as follows
 $$
 e_1=1\otimes 1, \quad e_2 = \frac{-d_1+\sqrt{d_1}}2 =\frac{-d_1+\sqrt{d_1}}2 \otimes 1, \quad  e_3= \frac{d_2+\sqrt{d_2}}2 =1\otimes \frac{d_2+\sqrt{d_2}}2, \quad e_4= e_2 e_3.
 $$
We will drop $\otimes$ when there is no confusion.  Then it is easy to check that
\begin{equation} \label{eq:SpaceIdentification}
(W_\Q, Q_\Q) \cong (V, Q)=(M_2(\Q),  N\det),\quad\quad\quad \sum x_i e_i \mapsto \kzxz {x_3} {x_1} {x_4} {x_2}.
\end{equation}
We will identify $(W_\Q, Q_\Q)$ with the quadratic space $(V, Q) =(M_2(\Q), N\det)$. Under this identification, the lattice $L=M_2(\Z)$ becomes $\OO_E$.  Then one can check  that the maximal torus $T$ in (\ref{eq:torus}) can be identified with (\cite{HYbook}, \cite[Section 6]{BKY12})
$$
T(R) =\{ (t_1, t_2)\in (E_1\otimes_\Q R)^\times \times (E_2\otimes_\Q R)^\times:\,  t_1 \bar{t}_1 = t_2 \bar{t}_2 \},
$$
for any $\Q$-algebra $R$,  and the map from $T$ to $\SO(W)$ is given by  $(t_1, t_2) \mapsto {t_1}/{\bar{t}_2}$. The map from $T$ to $H$ is explicitly given as follows.
Define the embedding
\begin{equation} \label{eq:iota}
\iota_j: \kk_j \rightarrow  M_2(\Q), \quad  \iota_j(r) (e_{j+1}, e_1)^t = ( r e_{j+1}, r e_1)^t.
\end{equation}
Then $\iota= (\iota_1, \iota_2)$ gives the embedding from $T$ to $H$.

Extend the two real embeddings of $F$ into a CM type   $\Sigma=\{\sigma_1, \sigma_2\}$ of $E$ via
$$
\sigma_1(\sqrt{d_i}) =\sqrt{d_i} \in \H,\quad  \sigma_2(\sqrt{d_1}) =\sqrt{d_1}, \quad  \sigma_2(\sqrt{d_2}) =-\sqrt{d_2}.
$$
 Since $W_{\sigma_2} =W\otimes_{F, \sigma_2} \R \subset  V_\R$ has signature $(0, 2)$, it gives two points $z_{\sigma_2}^\pm$ in $\D$. In this case, the big CM cycles in Section \ref{sect:CMvalue} become
\begin{equation}
Z(W, z_{\sigma_2}^\pm) = \{ z_{\sigma_2}^\pm\} \times T(\Q)\backslash T(\A_f)/K_T  \in Z^2(X_K),
\end{equation}
and
$$
Z(W) = Z(W, z_{\sigma_2}^\pm) +  \sigma(Z(W, z_{\sigma_2}^\pm)).
$$
For simplicity, we will denote  $z_{\sigma_2}$ for $z^+_{\sigma_2}$.

\begin{lemma} \label{lem3.4}  On $\H^{\pm, 2}$, one has  $z_{\sigma_2} = (\tau_1, \tau_2) \in \H^2$ and $z_{\sigma_2}^-=(\bar \tau_1, \bar \tau_2)\in (\H^- )^2$, where
$$
\tau_j=\frac{d_j + \sqrt{d_j}}2.
$$
\end{lemma}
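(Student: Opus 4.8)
The plan is to compute the two isotropic lines of the complexified negative plane $W_{\sigma_2}$ explicitly inside $V_\C=M_2(\C)$ and to recognize them, via the section $w_N$ of Proposition~\ref{prop3.1}, as $w_N(\tau_1,\tau_2)$ and $w_N(\bar\tau_1,\bar\tau_2)$. I first recall that a negative definite plane $P\subset V_\R$ determines exactly the two points of $\D$ cut out by the two isotropic lines $\C w\subset P\otimes_\R\C$ (for such $w=x+iy$ with $x,y\in P$ one has automatically $(w,\bar w)=(x,x)+(y,y)<0$, so $\C w\in\mathcal L$), and that by construction the big CM points $z_{\sigma_2}^\pm$ are precisely these two points for $P=W_{\sigma_2}$. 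Hence everything reduces to locating $W_{\sigma_2}\otimes_\R\C$ inside $M_2(\C)$; an equivalent, more conceptual route would be to characterize $z_{\sigma_2}^\pm$ as the $T(\R)$-fixed points of $\D$ and invoke that the CM points of $\iota_j(E_j^\times)$ on $\H\cup\H^-$ are $\tau_j$ and $\bar\tau_j$, but the direct computation is cleaner.

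To carry this out I would use that, under the orthogonal decomposition $V_\R=W\otimes_\Q\R=W_{\sigma_1}\oplus W_{\sigma_2}$, the summand $W_{\sigma_i}$ is exactly the subspace on which $\sqrt D\in F$ acts (via multiplication on $W=E$, transported through \eqref{eq:SpaceIdentification}) by the scalar $\sigma_i(\sqrt D)=(-1)^{i-1}\sqrt D$; thus $W_{\sigma_2}\otimes_\R\C$ is the $(-\sqrt D)$-eigenspace of the multiplication-by-$\sqrt D$ operator on $M_2(\C)$. A direct check on the basis $e_1,e_2,e_3,e_4$ shows that multiplication by $\sqrt{d_1}=2e_2+d_1$ becomes left matrix multiplication $X\mapsto A_1X$, and multiplication by $\sqrt{d_2}=2e_3-d_2$ becomes right matrix multiplication $X\mapsto XA_2$, where
$$A_j=\zxz{d_j}{-2n_j}{2}{-d_j},\qquad n_j=\tfrac{d_j^2-d_j}4\in\Z,\qquad A_j^2=d_jI.$$
Since the chosen CM type forces $\sqrt D=-\sqrt{d_1}\sqrt{d_2}$ in $E$ — the product $\sqrt{d_1}\sqrt{d_2}$ has negative image under $\sigma_1$ while $\sigma_1(\sqrt D)=\sqrt D>0$ — multiplication by $\sqrt D$ is the operator $X\mapsto -A_1XA_2$, and therefore $W_{\sigma_2}\otimes_\R\C=\{X\in M_2(\C):A_1XA_2=\sqrt D\,X\}$.

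Finally I would exhibit the isotropic lines by a rank-one Ansatz. With $\sqrt{d_j}$ normalized to lie in $\H$, one checks that $(\tau_1,1)^t$ is a column eigenvector of $A_1$ with eigenvalue $\sqrt{d_1}$ and that $(1,\,-\tau_2)$ is a row eigenvector of $A_2$ with eigenvalue $-\sqrt{d_2}$, where $\tau_j=\tfrac{d_j+\sqrt{d_j}}2$; consequently
$$w:=\begin{pmatrix}\tau_1\\1\end{pmatrix}(1,\,-\tau_2)=\zxz{\tau_1}{-\tau_1\tau_2}{1}{-\tau_2}=w_N(\tau_1,\tau_2)$$
satisfies $A_1wA_2=-\sqrt{d_1}\sqrt{d_2}\,w=\sqrt D\,w$ and $\det w=0$, so $\C w$ is one of the two isotropic lines attached to $W_{\sigma_2}$, and its complex conjugate is spanned by $\bar w=w_N(\bar\tau_1,\bar\tau_2)$. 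Hence $\{z_{\sigma_2}^+,z_{\sigma_2}^-\}=\{pr(w_N(\tau_1,\tau_2)),\,pr(w_N(\bar\tau_1,\bar\tau_2))\}$; since $\tau_j\in\H$ while $\bar\tau_j\in\H^-$, the first lies in the component $\H^2$ and the second in $(\H^-)^2$ under the isomorphism $pr\circ w_N$ of Proposition~\ref{prop3.1}, and the labelling is such that $z_{\sigma_2}^+$ is the one in $\H^2$, which is exactly the assertion. The \emph{main obstacle} is purely the sign and branch bookkeeping: the identity $\sqrt D=-\sqrt{d_1}\sqrt{d_2}$, the verification that multiplication by $\sqrt{d_j}$ has the stated matrix form under \eqref{eq:SpaceIdentification}, and keeping track of which of the two eigenvector pairings realizes the negative plane $W_{\sigma_2}$ rather than $W_{\sigma_1}$ and which sign is labelled $z_{\sigma_2}^+$.
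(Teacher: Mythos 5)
Your argument is correct, and it reaches the conclusion by a route whose mechanism differs from the paper's, although both are explicit linear-algebra computations inside $M_2$. The paper locates $W_{\sigma_2}$ by brute force: working in the decomposition $V_\R=W_{\sigma_1}\oplus W_{\sigma_2}$ it takes the explicit $\R$-orthogonal basis $u=(0,\sqrt{|d_2|})$, $v=(0,\sqrt{d_2})$ of the negative plane, solves for $u$ and $v$ as linear combinations of $e_1,\dots,e_4$ (i.e.\ as explicit matrices), and then checks directly that $u-iv=\tfrac{2}{\sqrt{|d_1|}}\,w_N(\tau_1,\tau_2)$ and $u+iv=\tfrac{2}{\sqrt{|d_1|}}\,w_N(\bar\tau_1,\bar\tau_2)$, which exhibits both isotropic lines at once. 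You never compute such a basis: you characterize $W_{\sigma_2}\otimes_\R\C$ as the $\sigma_2(\sqrt D)$-eigenspace of multiplication by $\sqrt D$, compute that operator in matrix form as $X\mapsto -A_1XA_2$, and produce the isotropic line by a rank-one product of eigenvectors of $A_1$ and $A_2$; I checked the action of $\sqrt{d_1}=2e_2+d_1$ and $\sqrt{d_2}=2e_3-d_2$ on $e_1,\dots,e_4$, and your matrices $A_j$, the eigenvector computations, and the sign identity $\sqrt D=-\sqrt{d_1}\sqrt{d_2}$ (forced by $\sigma_1(\sqrt D)>0$ and $\sigma_1(\sqrt{d_j})\in\H$) are all correct. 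What your route buys is structure: it runs through the $E$-module structure of $W$ (the same structure underlying the torus $T$ and the embeddings $\iota_j$ of (\ref{eq:iota})), so no inversion of the $4\times 4$ change of basis is needed; what the paper's route buys is an explicit real orthogonal basis of $W_{\sigma_2}$, so the two oriented planes and their isotropic lines are visible with no eigenvalue or sign bookkeeping beyond the final identification. As for the one loose end you flag, the labelling of $z_{\sigma_2}^{\pm}$: the paper treats it exactly as you do---the two points differ only by the choice of orientation of $W_{\sigma_2}$, and since $Z(W)$ involves $Z(W,z_{\sigma_2}^{+})$, $Z(W,z_{\sigma_2}^{-})$ and their conjugates together, which isotropic line is called $+$ is immaterial for the applications, so this is a convention rather than a gap.
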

\begin{proof} In the decomposition
$$
V_\R =V\otimes_\Q \R =W_{\sigma_1} \oplus W_{\sigma_2}, \quad  W_{\sigma_i} =E\otimes_{F, \sigma_i}\R \cong \C,\ r\mapsto \sigma_i(r),
$$
the $\R$-basis $\{e_i, i =1, 2, 3, 4\}$  becomes
$$
e_1=(1, 1), \quad e_2=(-\bar \tau_1,-\bar \tau_1), \quad \quad e_3=(\tau_2, \bar\tau_2), \quad  \hbox{and}  \quad e_4=(-\bar\tau_1\tau_2, -\bar \tau_1 \bar\tau_2).
$$
The negative two plane $W_{\sigma_2}$ representing $z_{\sigma_2}^{\pm}$ has an $\R$-orthogonal basis
$$
u=(0, \sqrt{|d_2|}) ,  \hbox{ and }  v=(0, \sqrt{d_2}) \in W_{\sigma_2} \subset V_\R .
$$
One checks
\begin{align*}
u&= -\frac{D-\sqrt D}{2\sqrt{|d_1|}} e_1- \frac{d_2}{\sqrt{|d_1|}} e_2 +\frac{d_1}{\sqrt{|d_1|}} e_3  +\frac{2}{\sqrt{|d_1|}} e_4
\\
&= \kzxz { \frac{d_1}{\sqrt{|d_1|}}} {-\frac{D-\sqrt D}{2\sqrt{|d_1|}} }  {\frac{2}{\sqrt{|d_1|}}} {-\frac{d_2}{\sqrt{|d_1|} }}  .
\end{align*}
and
\begin{align*}
v&= \frac{\sqrt{d_2} (\sqrt{d_1} +\sqrt{d_2})}{2} e_1+  \frac{\sqrt{d_2}}{\sqrt{d_1}} e_2 - e_3
\\
 &=\kzxz {-1} { \frac{\sqrt{d_2} (\sqrt{d_1} +\sqrt{d_2})}{2} } {0}  {\frac{\sqrt{d_2}}{\sqrt{d_1}}  }   .
\end{align*}
So
$$
u-i v=\frac{2}{\sqrt{|d_1|} } \kzxz {\tau_1 }  {-\tau_1 \tau_2} {1} {-\tau_2}=\frac{2}{\sqrt{|d_1|}} w_N(\tau_1, \tau_2),
$$
and
$$
u+i v=\frac{2}{\sqrt{|d_1|}} w_N(\bar{\tau}_1, \bar{\tau}_2)
$$
as claimed.
\end{proof}

\begin{lemma} \label{lem:classprojection} Let
$K_j=\iota_j^{-1}(K(\Gamma))$ and let $\Cl(K_j) = E_j^\times \backslash E_{j, f}^\times/K_j$ be the associated class group of $E_j$. Then there is an injection
$$
p': T(\Q) \backslash T(\A_f)/K_T \rightarrow  \Cl(K_1)\times \Cl(K_2)
$$
with image
\begin{align*}
\IM(p') &=\{ (C_1, C_2) \in \Cl(K_1) \times \Cl(K_2):\, \exists\   t_j \in E_{j, f}^\times \hbox{ with } C_j=[t_j], t_1 \bar t_1=t_2 \bar t_2\}
\\
 &= \{ (C_1, C_2) \in \Cl(K_1) \times \Cl(K_2):\,  \exists  \hbox{  fractional ideals  } \mathfrak a_i \hbox{ with } C_j=[\mathfrak a_j],  \norm(\mathfrak a_1)=\norm(\mathfrak a_2)\}.
\end{align*}
\end{lemma}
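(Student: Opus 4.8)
The plan is to use the obvious projection $T(\A_f)\hookrightarrow E_{1,f}^\times\times E_{2,f}^\times$, $(t_1,t_2)\mapsto([t_1],[t_2])\in\Cl(K_1)\times\Cl(K_2)$, and to reduce the whole statement to one positivity-plus-integrality argument. First I would check that this map descends to the double coset space. An element of $T(\Q)$ lies diagonally in $E_1^\times\times E_2^\times$, so left multiplication by it does not change either class $[t_j]$. For $K_T=K\cap T(\A_f)$, each component $k_j$ of an element $(k_1,k_2)\in K_T$ satisfies $\iota_j(k_j)\in K(\Gamma)$ by the construction of $K$ and $\iota=(\iota_1,\iota_2)$, hence $k_j\in\iota_j^{-1}(K(\Gamma))=K_j$; so right multiplication by $K_T$ does not change $[t_j]$ either. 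This produces the map $p'$.

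For the image, one inclusion is immediate: any pair $(t_1,t_2)$ coming from $T(\A_f)$ satisfies $t_1\bar t_1=t_2\bar t_2$ by the very definition of $T$. Conversely, if $(C_1,C_2)$ has representatives $t_j\in E_{j,f}^\times$ with $C_j=[t_j]$ and $t_1\bar t_1=t_2\bar t_2$, then $(t_1,t_2)\in T(\A_f)$ maps to $(C_1,C_2)$, so these pairs are exactly $\IM(p')$. The reformulation in terms of fractional ideals $\mathfrak a_j$ with $\norm(\mathfrak a_1)=\norm(\mathfrak a_2)$ is the standard idele–ideal dictionary: the fractional $\Z$-ideal attached to $t_j\bar t_j\in\A_f^\times$ is precisely $\norm(\mathfrak a_j)$ when $\mathfrak a_j$ is the fractional $\OO_j$-ideal of $t_j$, and I would simply record this translation rather than dwell on it.

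The heart of the matter — and the step I expect to be the only delicate one — is injectivity. Suppose $(s_1,s_2),(t_1,t_2)\in T(\A_f)$ satisfy $p'([s_1,s_2])=p'([t_1,t_2])$, so that separately $s_j=\alpha_j t_j k_j$ with $\alpha_j\in E_j^\times$ and $k_j\in K_j$. I want to show $(\alpha_1,\alpha_2)\in T(\Q)$ and $(k_1,k_2)\in K_T$, which forces the two classes to coincide. Applying $z\mapsto z\bar z$ to $s_j=\alpha_j t_j k_j$ and using $s_1\bar s_1=s_2\bar s_2$ and $t_1\bar t_1=t_2\bar t_2$, the $t$-terms cancel and one gets, in $\A_f^\times$,
$$
(\alpha_1\bar\alpha_1)(k_1\bar k_1)=(\alpha_2\bar\alpha_2)(k_2\bar k_2).
$$
Now $\alpha_j\bar\alpha_j=N_{E_j/\Q}(\alpha_j)\in\Q^\times$ lies diagonally and, since $E_j$ is imaginary quadratic, is a positive rational; meanwhile $k_j\bar k_j=\det\iota_j(k_j)\in\hat\Z^\times$ because $\iota_j(k_j)\in K(\Gamma)\subset\GL_2(\hat\Z)$. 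Hence the ratio $(\alpha_1\bar\alpha_1)/(\alpha_2\bar\alpha_2)$ is at once a positive rational number and a unit at every finite place, so it equals $1$. Therefore $\alpha_1\bar\alpha_1=\alpha_2\bar\alpha_2$, i.e. $(\alpha_1,\alpha_2)\in T(\Q)$, and then $k_1\bar k_1=k_2\bar k_2$, so $(k_1,k_2)\in T(\A_f)$; combined with $\iota_j(k_j)\in K(\Gamma)$ this gives $(k_1,k_2)\in K\cap T(\A_f)=K_T$. Thus $(s_1,s_2)=(\alpha_1,\alpha_2)(t_1,t_2)(k_1,k_2)$ represents the same double coset, and $p'$ is injective. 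Everything apart from this positivity/integrality observation is bookkeeping with the identifications of Section~\ref{sect:BigCM}.
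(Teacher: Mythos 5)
Your well-definedness and injectivity arguments are correct and are essentially the paper's own proof: the whole point is the observation that the ratio of the rational norms is simultaneously a positive rational and everywhere a unit, and $\Q_{>0}\cap\hat\Z^\times=\{1\}$; likewise the first description of $\IM(p')$ is, as you say, just the definition of $T$.

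The gap is the final step, which you wave off as ``the standard idele--ideal dictionary.'' Only one direction of that equivalence is formal. For the converse, suppose $\mathfrak a_j$ are fractional ideals with $C_j=[\mathfrak a_j]$ and $\norm(\mathfrak a_1)=\norm(\mathfrak a_2)$, and choose $t_j\in E_{j,f}^\times$ generating $\mathfrak a_j$. Equality of the ideal norms only yields $t_1\bar t_1=t_2\bar t_2\,u$ for some $u\in\hat\Z^\times$; it does \emph{not} give an element of $T(\A_f)$, which requires $t_1\bar t_1=t_2\bar t_2$ on the nose. One must remove $u$ by replacing $t_j$ with $t_jw_j$, $w_j\in\hat\OO_{E_j}^\times$ (so the associated ideal, and hence the condition $[t_j]=[\mathfrak a_j]$, is unchanged), with $u=(w_1\bar w_1)^{-1}(w_2\bar w_2)$. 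This is exactly where the standing hypothesis $(d_1,d_2)=1$ is used: for every prime $p$ at least one $E_j$ is unramified at $p$, and for an unramified quadratic extension of $\Q_p$ the norm map is surjective on local units, so each $u_p$ can be absorbed as $w_{j,p}\bar w_{j,p}$ for a suitable $j$; patching over all $p$ produces $w_1,w_2$ as required. Without this correction the argument does not close, and without coprimality the statement itself can fail (at an odd prime ramified in both fields the norms of local units are only the squares, so $u_p$ need not be absorbable). The paper devotes the second half of its proof precisely to this step, and your proposal needs it written out rather than cited as bookkeeping.
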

\begin{proof} Clearly $p'$ is a group homomorphism.  We first check that $p'$ is injective. Assume $[t_1, t_2]\in \ker p$,  write $t_j=g_j k_j$ with $g_j \in E_j^\times$ and $k_j\in K_j$. Then $t_1 \bar t_1 =t_2 \bar t_2$ implies
$$
  \frac{g_1 \bar g_1}{g_2 \bar g_2} =\frac{ k_2 \bar k_2 }{ k_1 \bar k_1 } \in \Q_{>0}\cap \hat\Z^\times =\{1\},
  $$
 so $(g_1, g_2) \in T(\Q)$, and $ k_2 \bar k_2 =k_1 \bar k_1$. This implies
 $$(k_1, k_2) \in K_T=\{ (t_1, t_2) \in T(\A_f): (\iota_1(t_1), \iota_2(t_2))\in K_\Gamma=  K(\Gamma) \times  K(\Gamma)\}.
 $$
 So $(t_1, t_2) \in T(\Q) K_T$. The first formula for $\IM(p)$ is the definition. To show the second formula, assume $\norm(\mathfrak a_1) =\norm(\mathfrak a_2)$. Let  $t_j \in E_{j, f}$ such that its associated ideal is $\mathfrak a_j$. Then  $t_1 \bar t_1= t_2 \bar t_2 u$ for some  $u \in \hat\Z^\times$. When $p \nmid d_j$, $u_p =w_p \bar w_p$ for some $w_p\in \OO_{E_{j, p}}^\times$. So we can decompose $u=u_1^{-1}u_2$ such that $u_j=w_j \bar w_j \in \norm_{E_j/\Q} \hat{\OO}_{E_j}^\times$. Replacing $t_j$ by $t_j w_j$, we find $t_j\in E_{j, f}^\times$ such that $t_1\bar t_1 =t_2 \bar t_2$ and $[t_j]=[\mathfrak a_j]$.
\end{proof}

  Let $H_j$ be the  class field of $\kk_j$ associated to $K_j$ and $H=H_1H_2$, the composition of  $H_1$ and $H_2$.
By the complex multiplication theory, the point $[z_{\sigma_2}]\in X_K$ is defined over $H$. Moreover, one has a natural map induced by $\iota_j$ in (\ref{eq:iota})
\begin{equation} \label{eq:CMpoint}
\iota_j:  \Cl(K_{j}) \rightarrow X_\Gamma=\GL_2(\Q) \backslash \H^\pm \times \GL_2(\A_f)/K(\Gamma),  \quad \iota_j([t^{-1}])  =[\tau_j, \iota_j(t^{-1})]= \tau_j^{\sigma_{t}}
\end{equation}
Here $\sigma_{t} \in \Gal(H_j/E_j)$ is associated to $[t]$ by class field theory.  The last identity is Shimura's reciprocity law (see for example \cite{YaGalois}). We will also write
$\tau_j^{\sigma_t}=\tau_j^{\sigma_{\mathfrak a}}$ in ideal language where $[\mathfrak a_j]\in \Cl(K_j)$ corresponds to the idele  class of $t$. Now  the following two propositions are  clear.

\begin{proposition} \label{prop3.2}  Let $(t_1, t_2) \in T(\A_f)$, and let $\sigma_{t_j} \in \Gal(H_j/E_j)$ be the associated Galois element (to $t_j$) via the Artin map. Then
$$
[z_{\sigma_2}, (t_1^{-1}, t_2^{-1}) ] = [\tau_1^{\sigma_{t_1}}, \tau_2^{\sigma_{t_2}}].
$$
\end{proposition}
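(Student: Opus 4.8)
The plan is to reduce the statement to the description of the complex points of $X_K$ together with Shimura's reciprocity law, both of which have already been recalled. First I would unwind the definition of the big CM point $z_{\sigma_2}$. By Lemma \ref{lem3.4} the negative two-plane $W_{\sigma_2}$ corresponds, under the identification $\H^2 \cup (\H^-)^2 \cong \mathbb D$ of Proposition \ref{prop3.1}, to the pair $(\tau_1,\tau_2)$ with $\tau_j = \frac{d_j+\sqrt{d_j}}{2}$. Hence, as a point of $X_K(\C) = H(\Q)\backslash(\mathbb D \times H(\A_f)/K)$, we have $[z_{\sigma_2},(t_1^{-1},t_2^{-1})] = [(\tau_1,\tau_2),\iota(t_1^{-1},t_2^{-1})]$, where $\iota = (\iota_1,\iota_2)$ is the embedding of $T$ into $H$ from \eqref{eq:iota}. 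Under the identification $X_K \cong X_\Gamma \times X_\Gamma$ (via the strong approximation theorem, as in Section \ref{sect2.1}), the class $[(\tau_1,\tau_2),(\iota_1(t_1^{-1}),\iota_2(t_2^{-1}))]$ decomposes componentwise into $\bigl([\tau_1,\iota_1(t_1^{-1})],[\tau_2,\iota_2(t_2^{-1})]\bigr) \in X_\Gamma \times X_\Gamma$. This is the only point where one has to be a little careful: one must check that the embedding $T \hookrightarrow H$ is compatible with the two projections $H \to \GL_2$ and with the decomposition of the adelic double coset, so that the $\GL_2(\A_f)$-component in the $j$-th factor is exactly $\iota_j(t_j^{-1})$; this is immediate from the definition \eqref{eq:iota}.

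Next I would apply the identity \eqref{eq:CMpoint}, which is precisely Shimura's reciprocity law: for each $j$, $\iota_j([t_j^{-1}]) = [\tau_j,\iota_j(t_j^{-1})] = \tau_j^{\sigma_{t_j}}$, where $\sigma_{t_j}\in\Gal(H_j/E_j)$ is the Galois element attached to $t_j$ by the Artin map. Substituting these two identities into the componentwise description from the previous paragraph yields
$$
[z_{\sigma_2},(t_1^{-1},t_2^{-1})] = [\tau_1^{\sigma_{t_1}},\tau_2^{\sigma_{t_2}}],
$$
which is exactly the assertion of the proposition.

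The only genuine content here — and the step I would treat most carefully — is the bookkeeping that relates the single adelic double coset on $X_K$ to the pair of adelic double cosets on $X_\Gamma\times X_\Gamma$ under the isomorphism $X_K\cong X_\Gamma\times X_\Gamma$ fixed in Section \ref{sect2.1}; one must make sure the choice of $K=(K(\Gamma)\times K(\Gamma))\cap H(\A_f)$ and the spin-character constraint $\det g_1=\det g_2$ do not introduce extra connected-component ambiguities, and that the action of $T(\A_f)$ on $z_{\sigma_2}$ really is by right translation in each factor. Once that is pinned down, the proposition is a formal consequence of Lemma \ref{lem3.4} and Shimura's reciprocity law \eqref{eq:CMpoint}, and no further computation is needed; this is why the statement can be asserted as ``clear'' after the preceding lemmas.
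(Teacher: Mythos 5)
Your proposal is correct and follows essentially the route the paper intends: the paper asserts the proposition as ``clear'' precisely because it follows from Lemma \ref{lem3.4} (identifying $z_{\sigma_2}$ with $(\tau_1,\tau_2)$), the componentwise identification $X_K\cong X_\Gamma\times X_\Gamma$ of Section \ref{sect2.1}, and Shimura's reciprocity law as recorded in \eqref{eq:CMpoint}, which is exactly the chain of reductions you spell out. Your extra care about the compatibility of $\iota=(\iota_1,\iota_2)$ with the two projections and the double-coset decomposition is the right point to check and is harmless bookkeeping, so nothing further is needed.
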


\begin{proposition} \label{prop:BigCMPoint} \label{prop3.7} Assume $(d_1, d_2) =1$.
Then
\begin{align*}
Z(W, z_{\sigma_2}) &=\sum_{ ([\mathfrak a_1], [\mathfrak a_2])\in \IM(p') } [\tau_1^{\sigma_{\mathfrak a_1}}, \tau_2^{\sigma_{\mathfrak a_2}}],
\\
 Z(W, z_{\sigma_2}^-) &=\sum_{ ([\mathfrak a_1], [\mathfrak a_2])\in \IM(p') } [(-\bar{\tau}_1)^{\sigma_{\mathfrak a_1}}, (-\bar{\tau_2})^{\sigma_{\mathfrak a_2}}],
 \\
Z(W)&=\sum_{ ([\mathfrak a_1], [\mathfrak a_2])\in \IM(p') }
 \left([\tau_1^{\sigma_{\mathfrak a_1}}, \tau_2^{\sigma_{\mathfrak a_2}}] +[(-\bar{\tau}_1)^{\sigma_{\mathfrak a_1}}, \tau_2^{\sigma_{\mathfrak a_2}}]
 + [\tau_1^{\sigma_{\mathfrak a_1}}, (-\bar{\tau}_2)^{\sigma_{\mathfrak a_2}}] +  [(-\bar{\tau}_1)^{\sigma_{\mathfrak a_1}}, (-\bar{\tau_2})^{\sigma_{\mathfrak a_2}}]\right).
\end{align*}
\end{proposition}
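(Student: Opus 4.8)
The plan is to read off the three formulas by combining the ingredients already in place in this section: the coordinates of the two fixed points from Lemma~\ref{lem3.4}, the parametrization of the torus orbit from Lemma~\ref{lem:classprojection}, the identification of a torus translate of the big CM point with a pair of conjugate CM points from Proposition~\ref{prop3.2}, and the definition $Z(W)=Z(W,z_{\sigma_2}^\pm)+\sigma(Z(W,z_{\sigma_2}^\pm))$ recalled in Section~\ref{sect:CMvalue}.

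For the first identity, unwind the definition
$$
Z(W,z_{\sigma_2})=\{z_{\sigma_2}\}\times\bigl(T(\Q)\backslash T(\A_f)/K_T\bigr)=\sum_{[(t_1,t_2)]\in T(\Q)\backslash T(\A_f)/K_T}\bigl[\,z_{\sigma_2},(t_1^{-1},t_2^{-1})\,\bigr],
$$
where I note that, $T$ being abelian, running $(t_1^{-1},t_2^{-1})$ over the finite quotient is the same as running $(t_1,t_2)$ over it. By Lemma~\ref{lem3.4} we take $z_{\sigma_2}=(\tau_1,\tau_2)\in\H^2$ with $\tau_j=\frac{d_j+\sqrt{d_j}}2$, and Proposition~\ref{prop3.2} identifies each summand with $[\tau_1^{\sigma_{t_1}},\tau_2^{\sigma_{t_2}}]$, i.e. with the pair of CM points $(\iota_1([t_1^{-1}]),\iota_2([t_2^{-1}]))$ of \eqref{eq:CMpoint}. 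Finally Lemma~\ref{lem:classprojection} says that $[(t_1,t_2)]\mapsto([t_1],[t_2])=([\mathfrak a_1],[\mathfrak a_2])$ is a bijection of $T(\Q)\backslash T(\A_f)/K_T$ onto $\IM(p')$; rewriting $\tau_j^{\sigma_{t_j}}=\tau_j^{\sigma_{\mathfrak a_j}}$ in ideal language gives the first formula.

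For the second identity I would run the identical argument at the other fixed point. By Lemma~\ref{lem3.4}, $z_{\sigma_2}^-=(\bar\tau_1,\bar\tau_2)\in(\H^-)^2$; since $-\bar\tau_j=\tau_j-d_j$ with $d_j\in\Z$, its upper half plane representative is $(-\bar\tau_1,-\bar\tau_2)\in\H^2$, and $-\bar\tau_j$ is again a CM point for $\kk_j$ because $\OO_j=\Z[\tau_j]=\Z[-\bar\tau_j]$. The big CM cycle attached to $z_{\sigma_2}^-$ is $\{z_{\sigma_2}^-\}\times T(\Q)\backslash T(\A_f)/K_T$ with the same torus orbit structure, so the analogue of Proposition~\ref{prop3.2} at $z_{\sigma_2}^-$ (using the complex conjugate fixed point $-\bar\tau_j$ of the embedding $\iota_j$; the resulting $\diag(1,-1)=\nu(-1)\in\nu(\hat\Z^\times)\subset K(\Gamma)$ on the finite part is absorbed), together with Lemma~\ref{lem:classprojection}, gives the second formula.

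For $Z(W)$ I would use $Z(W)=Z(W,z_{\sigma_2}^\pm)+\sigma\bigl(Z(W,z_{\sigma_2}^\pm)\bigr)$: the two orientations $z_{\sigma_2}^\pm$ contribute the two ``diagonal'' families $[\tau_1^{\sigma_{\mathfrak a_1}},\tau_2^{\sigma_{\mathfrak a_2}}]$ and $[(-\bar\tau_1)^{\sigma_{\mathfrak a_1}},(-\bar\tau_2)^{\sigma_{\mathfrak a_2}}]$ by the previous two paragraphs. Since $(d_1,d_2)=1$, the nontrivial automorphism $\sigma$ of $F=\Q(\sqrt D)$, sending $\sqrt D=\sqrt{d_1}\sqrt{d_2}$ to $-\sqrt D$, lifts to an automorphism of $E$ that is complex conjugation on exactly one of $\kk_1,\kk_2$ and trivial on the other; hence $\sigma(z_{\sigma_2})$ and $\sigma(z_{\sigma_2}^-)$ are, in $\H^{\pm,2}$, the points $(-\bar\tau_1,\tau_2)$ and $(\tau_1,-\bar\tau_2)$ and carry the analogous orbits over $\IM(p')$. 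Collecting the four families gives the last formula. I expect this last paragraph to be the only genuinely delicate step: identifying the action of $\Gal(F/\Q)$ on the $F$-rational cycle $Z(W,z_{\sigma_2}^\pm)$ and checking that the four families each occur once over the same index set $\IM(p')$ relies on the CM description of $Z(W)$ in \cite[Section~2]{BKY12}, with $(d_1,d_2)=1$ ensuring $\sigma$ acts purely through one quadratic factor.
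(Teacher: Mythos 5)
Your argument is correct and is essentially the paper's own: the authors state the proposition as ``clear'' precisely because it follows by unwinding $Z(W,z_{\sigma_2}^{\pm})=\{z_{\sigma_2}^{\pm}\}\times T(\Q)\backslash T(\A_f)/K_T$ using Lemma \ref{lem3.4}, Proposition \ref{prop3.2} (Shimura reciprocity), and the bijection of Lemma \ref{lem:classprojection} with $\IM(p')$, then taking the $\sigma$-conjugate (a lift of $\sigma$ to $E$ being complex conjugation on exactly one $\kk_j$) to produce the two mixed families in $Z(W)$. Your handling of $z_{\sigma_2}^-$ via transfer by $(\kzxz{1}{0}{0}{-1},\kzxz{1}{0}{0}{-1})$, absorbed into $K$, is exactly the authors' intended argument, so there is nothing to add.
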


The following lemma will be used later.

\begin{lemma} \label{lem3.3} Assume again $(d_1, d_2)=1$. Let $\Cl(E_j)$ be the ideal class group of $E_j$. Let $C_j \in  \Cl(E_j)$ be an ideal class for each $j=1, 2$. Then there is an ideal $\mathfrak a_j \in C_j$ such that $\norm (\mathfrak a_1)=\norm(\mathfrak a_2)$. In  particular, when $K_j=\hat{\OO}_{E_j}$ in Lemma \ref{lem:classprojection}, then the map $p'$ is an isomorphism.
\end{lemma}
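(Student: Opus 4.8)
The plan is to peel off the ``in particular'' clause first and then concentrate on the existence statement. When $K_j=\hat{\OO}_{E_j}$ one has $\Cl(K_j)=\Cl(E_j)$, and Lemma~\ref{lem:classprojection} already tells us that $p'$ is injective with image exactly $\{(C_1,C_2)\in\Cl(E_1)\times\Cl(E_2):\ \exists\ \mathfrak a_j\in C_j\text{ with }\norm(\mathfrak a_1)=\norm(\mathfrak a_2)\}$. So once the first assertion is proved, that image is all of $\Cl(E_1)\times\Cl(E_2)$, hence $p'$ is surjective as well, and therefore an isomorphism. Thus everything reduces to: given $C_j\in\Cl(E_j)$, produce integral ideals $\mathfrak a_j\in C_j$ of equal norm.

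To do that I would look for a single rational prime $p$ that splits in both $E_1$ and $E_2$ in such a way that, for each $j$, one of the two primes of $E_j$ above $p$ lies in the class $C_j$; then both $\mathfrak a_j$ can be taken of norm $p$. Let $H_j$ be the Hilbert class field of $E_j$ and $\mathrm{Art}_j\colon\Cl(E_j)\to\Gal(H_j/E_j)$ the Artin isomorphism. Each $H_j$ is Galois over $\Q$, and $\Gal(H_j/\Q)$ is generalized dihedral: complex conjugation lifts to an element of $\Gal(H_j/\Q)$ acting on the abelian subgroup $\Gal(H_j/E_j)\cong\Cl(E_j)$ by inversion, since $\mathfrak a\bar{\mathfrak a}$ is principal. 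The hypothesis $(d_1,d_2)=1$ is exactly what decouples the two fields: a prime of $\Q$ dividing neither $d_1$ nor $d_2$ is unramified in $H_1H_2/\Q$, one dividing only $d_1$ is unramified in $H_2/\Q$, and symmetrically; hence $H_1\cap H_2$ is unramified at every finite prime of $\Q$, so $H_1\cap H_2=\Q$ by Minkowski, and consequently $\Gal(H_1H_2/\Q)=\Gal(H_1/\Q)\times\Gal(H_2/\Q)$.

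Now I would invoke the Chebotarev density theorem for $H_1H_2/\Q$ applied to the element $(\mathrm{Art}_1(C_1),\mathrm{Art}_2(C_2))\in\Gal(H_1/E_1)\times\Gal(H_2/E_2)\subset\Gal(H_1H_2/\Q)$: there are infinitely many rational primes $p$, unramified in $H_1H_2$, whose Frobenius conjugacy class is that of this element. For such $p$, the Frobenius of a prime of $H_j$ above $p$ is conjugate in $\Gal(H_j/\Q)$ to $\mathrm{Art}_j(C_j)$, hence — by the generalized dihedral structure — equals $\mathrm{Art}_j(C_j)^{\pm1}$; in particular it is trivial on $E_j$, so $p$ splits in $E_j$ as $\mathfrak p_j\bar{\mathfrak p}_j$ with $[\mathfrak p_j]\in\{C_j,C_j^{-1}\}$. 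Replacing $\mathfrak p_j$ by $\bar{\mathfrak p}_j$ when necessary (which changes the class to its inverse but leaves the norm $p$ unchanged, since $\mathfrak p_j\bar{\mathfrak p}_j=(p)$), we get $\mathfrak a_j\in C_j$ with $\norm(\mathfrak a_1)=\norm(\mathfrak a_2)=p$, completing the argument and hence, via Lemma~\ref{lem:classprojection}, the isomorphism claim.

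The substantive point — really the only place where anything could go wrong — is the coordination between $E_1$ and $E_2$: one must use $(d_1,d_2)=1$ to get $\Gal(H_1H_2/\Q)=\Gal(H_1/\Q)\times\Gal(H_2/\Q)$, and keep track of the fact that conjugates in $\Gal(H_j/\Q)$ of a ``rotation'' $\mathrm{Art}_j(C_j)$ are only $\mathrm{Art}_j(C_j)^{\pm1}$, so that passing to the complex-conjugate prime repairs the sign ambiguity. Everything else is a routine Chebotarev/class field theory bookkeeping, and there is no analytic or geometric input.
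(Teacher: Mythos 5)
Your argument is correct, and it shares the paper's opening step: you both use $(d_1,d_2)=1$ to show $H_1\cap H_2$ is unramified everywhere over $\Q$, hence $H_1\cap H_2=\Q$ and $\Gal(H_1H_2/\Q)\cong \Gal(H_1/\Q)\times\Gal(H_2/\Q)$, and you both dispose of the ``in particular'' clause by feeding the existence statement back into Lemma \ref{lem:classprojection}. Where you diverge is the second half. The paper stays abelian: it picks $\sigma\in\Gal(H/\Q)$ with $\sigma|_{H_j}=\sigma_{C_j}$, observes that $\sigma$ fixes $E=E_1E_2$ so lies in the abelian group $\Gal(H/E)$, uses surjectivity of the Artin map for $H/E$ to write $\sigma=\sigma_{\mathfrak a}$ for an ideal $\mathfrak a$ of $E$, and takes $\mathfrak a_j=\norm_{E/E_j}\mathfrak a$, so that $C_j=[\mathfrak a_j]$ and $\norm(\mathfrak a_1)=\norm(\mathfrak a_2)=\norm_{E/\Q}(\mathfrak a)$ by functoriality of the Artin map under norms. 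You instead apply Chebotarev to the non-abelian extension $H_1H_2/\Q$ at the element $(\mathrm{Art}_1(C_1),\mathrm{Art}_2(C_2))$ and then use the generalized dihedral structure of $\Gal(H_j/\Q)$ to control the conjugacy ambiguity, correcting the resulting $C_j^{\pm1}$ by passing to the conjugate prime $\bar{\mathfrak p}_j$, which preserves the norm. Both are complete; the paper's route is shorter and needs no conjugacy bookkeeping (the inversion issue never arises because everything happens inside $\Gal(H/E)$), and it produces ideals that are norms from $E$, which is the shape in which such classes reappear later in the paper. Your route buys a sharper conclusion: infinitely many rational primes $p$ split in both $E_1$ and $E_2$ with a prime of norm exactly $p$ in each prescribed class, at the cost of invoking the full Chebotarev theorem and the dihedral action of complex conjugation on $\Cl(E_j)$ (which you correctly justify by the principality of $\mathfrak a\bar{\mathfrak a}$).
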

\begin{proof} We first show $H_1\cap H_2=\Q$. Let $p$ be a rational prime, then  $p\nmid d_1$ or $p\nmid d_2$. When $p\nmid d_j$, $p$ is unramified in $H_j$ and thus in $H_1\cap H_2$. So every prime $p$ is unramified in $H_1\cap H_2$, and thus $H_1\cap H_2=\Q$. This implies
$$
\Gal(H/\Q) \cong \Gal(H_1/\Q) \times \Gal(H_2/\Q).
$$
So there is $\sigma \in \Gal(H/\Q)$ such that $\sigma|H_j=\sigma_{C_j}$. In  particular, $\sigma \in  \Gal(H/E)$, which is abelian. By  the class field theory, there is an ideal $\mathfrak a$ of $E$ such that $\sigma_\mathfrak{a} =\sigma$.
Let $\mathfrak a_j =\norm_{E/E_j}\mathfrak a$. Then $\sigma|H_j=\sigma_{\mathfrak a_j}$ and $\norm(\mathfrak a_1) =\norm(\mathfrak a_2) =\norm(\mathfrak a)$. Moreover, one has $C_j=[\mathfrak a_j]$.
\end{proof}

\section{Gross and Zagier's singular moduli factorization formula} \label{GrossZagier}

   We will give a different  proof of Gross and Zagier's factorization formula (Theorem \ref{theo:GrossZagier}) in this section. For this, we take $L=M_2(\Z)$ with $Q(X) =\det X$, and $W=E$ with $Q_F(x) =\frac{x \bar x}{\sqrt D}$, where $E=\Q(\sqrt{d_1}, \sqrt{d_2})$ and $F=\Q(\sqrt D)$ are as in Section \ref{sect:Product}. In this case, the lattice  $L\cong \OO_E$ is unimodular.

{\bf Proof of Theorem \ref{theo:GrossZagier}}
Recall the identification at the  beginning of Section \ref{sect:Product} of the product $X_0(1) \times X_0(1)$  of modular curves with the orthogonal Shimura surface of signature $(2, 2)$ and the  isotropic vectors
 $\ell=\kzxz{0}{-1}{0}{0}$ and $\ell'=\kzxz {0} {0} {1} {0}$ used for the identification. We also use them as in Theorem  \ref{BorcherdsProduct}  for Borcherds product expansion.  Write
$$
j(\tau)-744 =\sum_{m \ge -1} c(m) q^{m},
$$
then Borcherds proved in \cite{Borcherds95}
$$
j(z_1) -j(z_2) = \Psi(j(\tau) -744)
$$
which can be checked easily by Theorem \ref{BorcherdsProduct}.  Notice that
$\Cl(K_i) =\Cl(E_i)$ is the ideal class group of $E_i$ and $j(-\bar{\tau}_i)=j(\tau_i)$. So the map $p'$ in  Lemma \ref{lem:classprojection} is an isomorphism, and
\begin{align*}
\sum_{(z_1, z_2)\in Z(W)} \log|j(z_1)-j(z_2)|
 &=4 \sum_{[\mathfrak a_i]\in \Cl(E_i)} \log|j(\tau_1^{\sigma_{\mathfrak a_1}})-j(\tau_2^{\sigma_{\mathfrak a_2}})|
 \\
  &=4\sum_{[\mathfrak a_i]\in \Cl(E_i)} \log|j(\tau_{\mathfrak a_1})-j(\tau_{\mathfrak a_2})|.
\end{align*}
Here
$$
\tau_{\mathfrak a_i} =\frac{b_i +\sqrt{d_i}}{2a_i}, \quad \ff \mathfrak a_i=[a_i, \frac{b_i +\sqrt{d_i}}{2}].
$$
So one has by Theorem \ref{theo:BigCM}
$$
-4 \sum_{[\mathfrak a_i] \in \Cl(E_i)} \log|j(\tau_{\mathfrak a_1} ) -j(\tau_{\mathfrak a_2})|^4
=C(W, K) a_1(\phi)
$$
with $\phi=\cha(\hat{\OO}_E)$, and
$$
C(W, K) = \frac{|Z(W, \sigma_2^\pm)|}{\Lambda(0, \chi_{E/F})}= \frac{2h(E_1) h(E_2)}{\Lambda(0, \chi_{E_1/\Q}) \Lambda(0, \chi_{E_2/Q})}=\frac{w_1 w_2}2,
$$
where $h(E_i)$ is the  class number of $E_i$.
By Proposition \ref{prop:Whittaker}, one has
$$
a_1(\phi) =\sum_{\substack{t \in \partial_F^{-1},\ \hbox{\tiny totally positive} \\ \tr_{F/\Q}t =1} } a(t, \phi).
$$

When  $|\hbox{Diff}(W,t) | >1$, $a(t, \phi)=0$. When $\hbox{Diff}(W,t) =\{\mathfrak p\}$, $\mathfrak p$ is inert in $E/F$ and $\ord_\mathfrak p(t\sqrt D \mathfrak p)$ is even, Proposition \ref{prop:Whittaker} implies
$$
a(t, \phi)= -4 \frac{1+\ord_\mathfrak  p (t\sqrt D)}2 \rho(t \sqrt D \mathfrak p^{-1}) \prod_{\mathfrak q <\infty} \gamma(W_\mathfrak q)\log(\norm(\mathfrak p)),
$$
since
$$
 \prod_{\mathfrak q \ne \mathfrak p} \frac{W_{t, \mathfrak q}^*(0, \phi)}{\gamma(W_\mathfrak q)}=\prod_{\mathfrak q \ne \mathfrak p}\rho_{\mathfrak q}(t\sqrt D) =\prod_{\mathfrak q} \rho_{\mathfrak q}(t\sqrt D \mathfrak p^{-1})= \rho(t \sqrt D \mathfrak p^{-1}).
$$
Here  we used the fact that $\rho_\mathfrak p(t\sqrt D \mathfrak p^{-1}) =1$  when $\mathfrak  p \in \Diff(W, t)$.
Next,  $\gamma(W_{\sigma_1}) =-i =-\gamma(W_{\sigma_2})$ implies
$$
\prod_{\mathfrak q <\infty} \gamma(W_\mathfrak q)= \prod_{\hbox{ all primes } v} \gamma(W_v)=1.
$$
So
$$
a(t, \phi) = -2 (1+\ord_\mathfrak p(t\sqrt D))  \rho(t \sqrt D\mathfrak p^{-1} ) \log (\norm(\mathfrak p)).
$$
Notice that the right hand side in the above identity is  automatically zero if we replace $\mathfrak p$ by other inert primes in $E/F$  since
 $
 \rho(t\sqrt D \mathfrak q^{-1})=0$ .
  So we have always
$$
a(t,\phi)=
   -2 \sum_{\mathfrak p \hbox{ inert in } E/F} (1+\ord_\mathfrak p(t\sqrt D))  \rho(t\mathfrak p^{-1} \partial_F) \log (\norm(\mathfrak p)).
$$
Putting everything together, and replacing $t\sqrt D$ by $t$, we obtain the theorem.

\begin{remark} \label{rem:GZformula}
It is easy to check that  our formula coincides with  \cite[(7.1)]{GZSingular}  and thus their main formula. Indeed,
\begin{equation} \label{eq4.1}
\sum_{\mathfrak a| t\OO_F} \chi_{E/F}(\mathfrak a)  \log \norm(\mathfrak a) = -\sum_{\mathfrak p \hbox{ inert in } E/F} \frac{(1+\ord_\mathfrak p(t\OO_F))}2  \rho(t\mathfrak p^{-1} ) \log (\norm(\mathfrak p))
\end{equation}
for  $t =\frac{m+\sqrt D}2\in \OO_F$ with $|m| < \sqrt D$.  To see it, for any fixed integral ideal $\mathfrak b$ of $F$, define
$$
f(\mathfrak b) = \sum_{\mathfrak a| \mathfrak b} \chi_{E/F}(\mathfrak a)  \log \norm(\mathfrak a).
$$
Write $\mathfrak b =\prod_{i=1}^n \mathfrak p_i^{e_i}$ with $e_i>0$. Assume $\mathfrak p_1$ is inert in $E/F$ and $e_1$ is odd,  write $\mathfrak b_1 =\mathfrak b \mathfrak p_1^{-e_1}$. Then (recall $\chi_{E/F}(\mathfrak p_1)=-1$).
\begin{align*}
f(\mathfrak b)
 &= \sum_{\mathfrak a_1| \mathfrak b_1}  \sum_{j=0}^{e_1} (-1)^{j} \chi_{E/F}(\mathfrak a_1) \left( j \log \norm(\mathfrak p_1)+ \log \norm(\mathfrak a_1)\right)
\\
&= \left(\sum_{j=0}^{e_1} (-1)^{j}j \log \norm(\mathfrak p_1)\right) \left(\sum_{\mathfrak a_1|\mathfrak b_1} \chi_{E/F}(\mathfrak a_1) \right)
+ \left(\sum_{j=0}^{e_1} (-1)^j\right) f(\mathfrak b_1)
\\
 &= -\frac{1+e_1}2\log \norm(\mathfrak p_1)\sum_{\mathfrak a_1|\mathfrak b_1} \chi_{E/F}(\mathfrak a_1)
 \\
 &= -\frac{1+e_1}2\log \norm(\mathfrak p_1) \prod_{i=2}^n(\sum_{j=0}^{e_i} \chi_{E/F}(\mathfrak p_i)^{j})
 \\
 &= -\frac{1+e_1}2 \rho(\mathfrak b_1)\log\norm(\mathfrak p_1)
 \\
  &=-\frac{1+e_1}2 \rho(\mathfrak b \mathfrak p_1^{-1})\log\norm(\mathfrak p_1).
\end{align*}
In particular, if there is another $\mathfrak p_i$ ($i>1$) inert in $E/F$ with $e_i$ odd, then $\rho(\mathfrak b \mathfrak p_1^{-1})=0$ and  $f(\mathfrak b)=0$.
 In our case,
$$
t \OO_F =\prod_{i=1}^n \mathfrak p_i^{e_i}.
$$
Then $\mathfrak p_i \in \Diff(W, t/\sqrt D)$ if and only if $\mathfrak p_i$ is inert in $E/F$, and $e_i$ is odd. When $$|\Diff(W, t/\sqrt D) |>1,$$ the above argument shows $f(t\OO_F)=0$, and (\ref{eq4.1}) holds as the right hand side of (\ref{eq4.1}) is also zero. When $\Diff(W, t/\sqrt D)=\{\mathfrak p\}$, say, $\mathfrak p=\mathfrak p_1$,  one has
$$
f(t\OO_F)=-\frac{1+e_1}2 \rho(\mathfrak b_1)\norm(\mathfrak p_1).
$$
The right hand side of (\ref{eq4.1}) equals this value too. So $(\ref{eq4.1})$ holds.
\end{remark}

\section{The Yui-Zagier conjecture for $\omega_i$} \label{sect:Weber}

\subsection{Borcherds product for $\omega_2(z_1) -\omega_2(z_2)$}

In this section, let
$$
L=\kzxz {\Z} {\Z} {2\Z} {\Z}  \hbox{ with } Q(X) =\det X
$$
and $\Gamma =\Gamma_0(2)$ in this section. It acts on $L'/L$ trivially,
where
$$L'/L=\left\{\mu_0=0,  \mu_1=e_{21},
\mu_2=\frac{1}2 e_{12},
\mu_3=\mu_1 + \mu_2\right\}.
$$
Here $e_{ij}$ is the $2\times 2$ matrix with the $(i,j)$ entry $1$ and all other entries $0$. It is easy to check
$Z(1, \mu_0) =X_{\Gamma_0(2)}^\Delta$ in  the open  variety $X_K=X_{\Gamma_0(2)} \times X_{\Gamma_0(2)}$.

Take the  primitive isotropic vector $\ell =-e_{12} \in L$ and the vector  $\ell'= e_{21} \in L'$ with $(\ell, \ell')=1$.  Since $(\ell, L) =2\Z$, we choose $\xi=2 \ell' \in L$ with $(\ell, \xi)=2$.
In this case,
$$
L_0'=\{ x \in L':\, (x, \ell) \equiv 0 \pmod 2\} =\{ \kzxz {a} {b/2} {2c} {d}:\, a, b, c, d \in \Z\}, \quad  L_0'/L =\{0, \mu_2\}.
$$
One has also
$$
M=L\cap (\Q \ell +\Q \ell')^\perp =\{ m(a, b) =\kzxz {a} {0} {0} {b}:\,  a, b \in \Z \},
$$
which is self-dual. So the projection $p$ from $L_0'/L$ to $M'/M$ is zero.  We further choose $\ell_M=e_{11}$ and $\ell_M'= e_{22}$ with $(\ell_M, \ell_M')=1$, so $P=0$. Finally for a weakly holomorphic modular form $f \in M_{0, \omega_L}^!$ with
$$
f(\tau) = \sum_{m, \mu} c(m, \mu) q^m \phi_\mu = \sum_{\mu} f_\mu \phi_\mu,
$$
one has
$$
f_M = f_{\mu_0}+f_{\mu_2}.
$$
 Now Theorem \ref{BorcherdsProduct} gives the following proposition in this special case.
\begin{proposition}  \label{prop5.1} Let
$$
f(\tau) =\sum_{m, \mu} c(m, \mu) q^m \phi_\mu \in M_{0, \omega_L}^!.
$$
Then there is a memomorphic modular form of two variables $\Psi(z_1, z_2, f)$ for $\Gamma_0(2) \times \Gamma_0(2)$  of parallel weight $\frac{c(0, 0)}2$ with the following product expansion near the cusp $\Q\ell$, with respect to a Weyl chamber $W$ whose closure contains $\ell_M$ ($z=(z_1, z_2)$):
$$
\Psi(z, f) = C e((\rho(W, f), z)) \prod_{\substack{ (m, n) \in \Z^2 \\ (\kzxz {-m} {0} {0} {n}, W) >0}}
 (1- q_1^{n} q_2^m)^{c(mn, 0)} (1+q_1^{n} q_2^m)^{c(mn, \mu_2)}.
$$
Here $q_j=e(z_j)$, and $|C| = 2^{\frac{c(0, \mu_2)}{2}}$.
\end{proposition}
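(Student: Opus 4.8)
\emph{Proof plan.} The plan is to read off the statement from the general Borcherds product expansion of Theorem~\ref{BorcherdsProduct}, specialized to the lattice $L=\kzxz{\Z}{\Z}{2\Z}{\Z}$ with $Q=\det$ and to the cusp data fixed above. First I would assemble the ingredients entering that formula in the present case: the isotropic data $\ell=-e_{12}$, $\ell'=e_{21}$ with $(\ell,\ell')=1$ and $N_\ell=2$ (from $(\ell,L)=2\Z$); the lattice $M=\{\diag(a,b):a,b\in\Z\}$, which is unimodular, so $M'=M$; the projection $p\colon L_0'/L\to M'/M$, which is the zero map since $L_0'/L=\{0,\mu_2\}$ and $M'/M$ is trivial; and $P=0$, since $\ell_M=e_{11},\ell_M'=e_{22}$ already span $M_\Q$. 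The Weyl vector $\rho(W,f)\in M'$ produced by (\ref{eq:WeylVector1})--(\ref{eq:WeylVector2}) I would carry along as an unevaluated element of $M'$, since the statement only records the factor $e((\rho(W,f),z))$.

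Next I would make the two pairings in Theorem~\ref{BorcherdsProduct} explicit using the bilinear form attached to $Q(X)=\det X$, namely $(X,Y)=x_{11}y_{22}+x_{22}y_{11}-x_{12}y_{21}-x_{21}y_{12}$. Writing a general vector of $M'=M$ as $\lambda=\diag(-m,n)$, $(m,n)\in\Z^2$, we get $Q(\lambda)=-mn$, hence $c(-Q(\lambda),\mu)=c(mn,\mu)$; and for $z=\diag(z_1,-z_2)\in\mathcal H_{\ell,\ell'}$ (the tube-domain coordinates of Section~\ref{sect:Product}, compatible with $w_N$ of Proposition~\ref{prop3.1} for $N=1$) one computes $(\lambda,z)=n z_1+m z_2$, so $e((\lambda,z))=q_1^n q_2^m$ with $q_j=e(z_j)$. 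Thus the positivity condition $(\lambda,W)>0$ becomes $(\diag(-m,n),W)>0$. For the inner product over $\mu\in L_0'/L$ with $p(\mu)\in\lambda+M$: that condition is automatic (both sides equal $M$), while $(\mu,\ell')$ is $0$ for $\mu=0$ and $-\tfrac12$ for $\mu=\mu_2$, so $e((\mu,\ell'))$ equals $1$, resp.\ $-1$. This turns the $\mu$-product into the two factors $(1-q_1^n q_2^m)^{c(mn,0)}$ and $(1+q_1^n q_2^m)^{c(mn,\mu_2)}$.

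It then remains to identify the constant $C$ via (\ref{eq:constant}). With $N_\ell=2$ only $\delta=1\in\Z/2$ contributes, giving $|C|=\big|(1-e(\tfrac12))^{c(0,\frac12\ell)/2}\big|=2^{c(0,\frac12\ell)/2}$; since $\tfrac12\ell=-\mu_2$ and $c(0,\mu)=c(0,-\mu)$ by (\ref{eq:sign}), this equals $2^{c(0,\mu_2)/2}$. That $\Psi(z,f)$ is a meromorphic modular form for $\Gamma_0(2)\times\Gamma_0(2)$ of parallel weight $c(0,0)/2$ follows from Theorem~\ref{BorcherdsLifting} together with the identification $X_K\cong X_{\Gamma_0(2)}\times X_{\Gamma_0(2)}$ and the description of $\mathcal L_K$ as the bundle of weight-$(1,1)$ forms in Section~\ref{sect:Product}; the validity of the product near $\Q\ell$ for $-Q(\operatorname{Im}z)$ large with $\operatorname{Im}z\in W$ is exactly the assertion of Theorem~\ref{BorcherdsProduct}.

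The main thing requiring care — there is no genuine obstacle, the substance being Theorem~\ref{BorcherdsProduct} — is bookkeeping on conventions: matching the tube-domain embedding $z\mapsto\diag(z_1,-z_2)$ and the $w$-map with Borcherds' coordinates so that the exponentials come out as $q_1^n q_2^m$ (and not an inverse or a complex conjugate), keeping the signs straight in $(\mu_2,\ell')=-\tfrac12$ and in $\tfrac12\ell=-\mu_2$, and noting that the $\tfrac14$-summation correction to the Weyl vector flagged in the proof of Theorem~\ref{BorcherdsProduct} is irrelevant here because $\rho(W,f)$ is left unevaluated.
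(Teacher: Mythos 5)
Your proposal is correct and matches the paper's treatment: the paper simply states that Proposition \ref{prop5.1} follows from Theorem \ref{BorcherdsProduct} applied to the data $\ell=-e_{12}$, $\ell'=e_{21}$, $N_\ell=2$, $M=\{\diag(a,b)\}$ self-dual, $p=0$, $P=0$ computed just before the statement, which is exactly your specialization. Your explicit checks of $(\lambda,z)=nz_1+mz_2$, $(\mu_2,\ell')=-\tfrac12$, $\tfrac12\ell\equiv\mu_2$ in $L'/L$, and the constant $|C|=2^{c(0,\mu_2)/2}$ are the same bookkeeping the paper leaves implicit.
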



\begin{proposition}\label{omega1} (1) \quad Let $M_{0, \omega_L}^{!, 0}$ be the subspace of  $M_{0,\omega_L}^!$ consisting of   constant vector $f=\sum a_i \phi_{\mu_i}$. Then  it is of dimension $2$ with a basis $\{ \phi_{\mu_0} + \phi_{\mu_1} ,  \phi_{\mu_0} + \phi_{\mu_2}\}$.

(2) \quad One has
\begin{align*}
\Psi(z, \phi_{\mu_0} +\phi_{\mu_1}) &=\eta(z_1) \eta(z_2),
\\
\Psi(z, \phi_{\mu_0} +\phi_{\mu_2}) &=  \sqrt 2  \eta(2z_1) \eta(2z_2),
\\
\Psi(z,  \phi_{\mu_2} -\phi_{\mu_1}) &=\frac{1}{\sqrt 2} \bold f_2(z_1) \bold f_2(z_2).
\end{align*}
Here $\bold f_2(z) =\omega_2(z)^{\frac{1}{24}}= \sqrt 2 \frac{\eta(2z)}{\eta(z)}$ is also a famous Weber function.
\end{proposition}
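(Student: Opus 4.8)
The proof divides into a finite computation with the Weil representation (part (1)) and a direct specialization of the product expansion of Proposition \ref{prop5.1} (part (2)).

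\textit{Part (1).} A constant vector $f=\sum_{i=0}^{3}a_i\phi_{\mu_i}$ automatically satisfies condition (ii) in the definition of a weakly holomorphic form (with $P_f=f$), so it lies in $M^{!,0}_{0,\omega_L}$ precisely when $\omega_L(\gamma)f=f$ for all $\gamma\in\SL_2(\Z)$; since $\SL_2(\Z)=\langle n(1),w\rangle$ it suffices to impose this for $\gamma=n(1)$ and $\gamma=w$. From $\omega_L(n(1))\phi_\mu=e(-Q(\mu))\phi_\mu$ together with the determinant computations $Q(\mu_0)=Q(\mu_1)=Q(\mu_2)=0$, $Q(\mu_3)=-\tfrac12$, invariance under $n(1)$ forces $a_3=0$. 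For $w$, I would write down the Gram matrix $\bigl((\mu_i,\mu_j)\bigr)$ from $(X,Y)=x_{11}y_{22}+x_{22}y_{11}-x_{12}y_{21}-x_{21}y_{12}$; the formula for $\omega_L(w)$ then shows, using $n=2$ and $[L':L]=4$, that $\omega_L(w)$ acts by $\tfrac12 H$ with $H$ the standard symmetric $4\times4$ Hadamard matrix, whose fixed subspace is $\{a_0=a_1+a_2+a_3\}$. Intersecting the two conditions gives the two-dimensional space $\{a_3=0,\ a_0=a_1+a_2\}$, spanned by $\phi_{\mu_0}+\phi_{\mu_1}$ and $\phi_{\mu_0}+\phi_{\mu_2}$; in particular $\phi_{\mu_2}-\phi_{\mu_1}$, being their difference, also lies in $M^{!,0}_{0,\omega_L}$.

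\textit{Part (2).} If $f\in M^{!,0}_{0,\omega_L}$ then $c(-m,\mu)=0$ for every $m>0$, so $Z(f)=0$ and, by Theorem \ref{BorcherdsLifting}, $\Psi(z,f)$ is a nowhere-vanishing holomorphic modular form (with a multiplier) of parallel weight $c(0,0)/2$ on $\Gamma_0(2)\times\Gamma_0(2)$. I then apply Proposition \ref{prop5.1}. Since $c(mn,0)$ and $c(mn,\mu_2)$ vanish unless $mn=0$, only the factors with $m=0$ or $n=0$ survive; with the single Weyl chamber $\Gr(M)$ and the orientation for which the product converges, the product collapses to
\[
\prod_{n>0}(1-q_1^n)^{c(0,0)}(1+q_1^n)^{c(0,\mu_2)}\ \prod_{m>0}(1-q_2^m)^{c(0,0)}(1+q_2^m)^{c(0,\mu_2)} .
\]
The Weyl vector is immediate here: because $P=0$ and $M$ is unimodular, the formulas (\ref{eq:WeylVector1})--(\ref{eq:WeylVector2}) reduce to $\rho_P=0$ and $\rho_{\ell_M'}=-\rho_{\ell_M}=\tfrac1{24}\bigl(c(0,0)+c(0,\mu_2)\bigr)$, so that $e\bigl((\rho(W,f),z)\bigr)=(q_1q_2)^{(c(0,0)+c(0,\mu_2))/24}$. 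Thus $\Psi(z,f)$ equals $C$ times the above product times $(q_1q_2)^{(c(0,0)+c(0,\mu_2))/24}$. Specializing $(c(0,0),c(0,\mu_2))$ to $(1,0)$, $(1,1)$, and $(0,1)$ — the three cases of $\phi_{\mu_0}+\phi_{\mu_1}$, $\phi_{\mu_0}+\phi_{\mu_2}$, and $\phi_{\mu_2}-\phi_{\mu_1}$ — and using $\eta(z)=q^{1/24}\prod_{n>0}(1-q^n)$, the identity $(1-q^n)(1+q^n)=1-q^{2n}$, and $\bold f_2(z)=\sqrt2\,\eta(2z)/\eta(z)=\sqrt2\,q^{1/24}\prod_{n>0}(1+q^n)$, the right-hand sides become $\eta(z_1)\eta(z_2)$, $\eta(2z_1)\eta(2z_2)$, and $\tfrac12\bold f_2(z_1)\bold f_2(z_2)$, up to the constant $C$.

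It remains to identify $C$. Its absolute value is $2^{c(0,\mu_2)/2}$ by Proposition \ref{prop5.1} (equivalently by (\ref{eq:constant}) at the cusp $\Q\ell$), which already yields the prefactors $1$, $\sqrt2$, and $\sqrt2/2=1/\sqrt2$ in the three identities; the residual unit phase is pinned down by comparing leading Fourier coefficients — after extracting the power of $q_1q_2$ both sides begin with $1$, by the classical normalizations of $\eta$ and $\bold f_2$ — or, alternatively, directly from Borcherds' explicit value of $C$. I expect the only genuinely delicate point to be this last normalization of the constant (its exact value, not its modulus); everything else is a routine unwinding of Proposition \ref{prop5.1}, with the Weyl-chamber analysis trivial because $f$ has empty principal part and $\Gr(M)$ is a single chamber.
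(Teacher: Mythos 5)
Your proposal is correct and follows essentially the same route as the paper: invariance under the generators $n(1)$ and $w$ of $\SL_2(\Z)$ gives $a_3=0$ and $a_0=a_1+a_2$ for part (1), and part (2) is the same unwinding of Proposition \ref{prop5.1} with the single Weyl chamber, the Weyl vector $\frac{c(0,0)+c(0,\mu_2)}{24}(-\ell_M+\ell_M')$, and $|C|=2^{c(0,\mu_2)/2}$. Your closing caveat about the constant is exactly how the paper handles it too — the phase of $C$ (equivalently of $\Psi$) is a normalization, fixed by taking $C=2^{c(0,\mu_2)/2}$ positive rather than deduced from Fourier coefficients.
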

\begin{proof} Recall that $\SL_2(\Z)$ is generated by $n(1)=\kzxz {1} {1} {0} {1} $ and $w=\kzxz {0} {-1} {1} {0}$.
$$
n(1) (f)=a_0e_0+a_1e_1+a_2e_2-a_3e_3 =f
$$
if and only if $a_3 =0$. Next, assuming $a_3=0$, then
$$
w(f) =\sum a_i \omega_L(w)(e_i) = \frac{1}2\left[ (\sum a_i)e_0 + \sum_{i=1}^3 (a_0 + a_i -\sum_{j\ne i}a_j) e_i\right] = f
$$
if and only if $a_0=a_1+a_2$. This proves (1). In such a case, $f= a_1(e_0 + e_1) + a_2(e_0+e_2)$.

To prove (2), notice that
$$
\Gr(M)=\{ \R \kzxz {a} {0} {0} {-1}:\, a > 0 \} \cong \R_{>0}.
$$
Since $f=a_1(e_0 + e_1) + a_2(e_0+e_2) $ has no negative term,  one sees that $\Gr(M)$ has only one Weyl chamber, i.e., itself with respect to $f$. A vector $\lambda = \kzxz {-m} {0} {0} {n}$ satisfies $(\lambda, W) >0$ if and only $ m, n \ge  0$ but  not both equal to $0$. One also has
$$
\rho(W,f)=\frac{2a_2+a_1}{24} (-\ell_M+\ell_M').
$$
Now the proposition is clear from Theorem \ref{BorcherdsProduct} if we just take $C=2^{c(0, \mu_2)/2}$.
\end{proof}

\begin{proposition}\label{prop:WeberProduct} Let
$$
 f=12(\phi_{\mu_2} -\phi_{\mu_1}) + \sum_{\gamma \in \Gamma_0(2) \backslash \SL_2(\Z)} (2^{12}\omega_2^{-1}+12)|\gamma \omega_L(\gamma)^{-1}\phi_{\mu_0}  \in M_{0, \omega_L}^!
 $$
 Then
 \begin{equation} \label{eqC}
c(0, \mu_0) =c(0, \mu_1)=c(0, \mu_3)=0, \quad c(0, \mu_2)=24,
\end{equation}
 and
 $$
 \Psi(z, f) = \omega_2(z_1) -\omega_2(z_2).
 $$
 \end{proposition}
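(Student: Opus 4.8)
\emph{Proof proposal.} The plan is to exhibit $f$ as a constant vector plus an arithmetic lift of a scalar function, read off enough of its Fourier expansion to apply Theorem~\ref{BorcherdsLifting}, match $\operatorname{div}\Psi(z,f)$ with $\operatorname{div}(\omega_2(z_1)-\omega_2(z_2))$ on the open surface, and finally compare the two at one cusp using the product expansion of Proposition~\ref{prop5.1}. For the first step, note that $g:=2^{12}\omega_2^{-1}+12$ is a weakly holomorphic modular function of weight $0$ for $\Gamma_0(2)$: at the cusp $\infty$ one has $2^{12}\omega_2^{-1}=q^{-1}\prod_{n\ge1}(1+q^n)^{-24}=q^{-1}-24+O(q)$, so $g=q^{-1}-12+O(q)$, while the identity $\omega_2(-1/\tau)=\Delta(\tau/2)/\Delta(\tau)=q^{-1/2}-24+\cdots$ shows that $g|_S$ and $g|_{ST}$ are holomorphic with constant term $12$, their integral $q$-powers agreeing and their half-integral ones differing by a sign. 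Since $\phi_{\mu_0}$ is fixed by $\omega_L(\gamma)$ for all $\gamma\in\Gamma_0(2)$ (check on generators using the formulas for $\omega_L(n(b))$ and $\omega_L(w)$), the lift $\tilde g:=\sum_{\gamma\in\Gamma_0(2)\backslash\SL_2(\Z)}g|_\gamma\,\omega_L(\gamma)^{-1}\phi_{\mu_0}$ lies in $M_{0,\omega_L}^!$; as $\phi_{\mu_2}-\phi_{\mu_1}=(\phi_{\mu_0}+\phi_{\mu_2})-(\phi_{\mu_0}+\phi_{\mu_1})\in M_{0,\omega_L}^{!,0}$ by Proposition~\ref{omega1}(1), indeed $f=12(\phi_{\mu_2}-\phi_{\mu_1})+\tilde g\in M_{0,\omega_L}^!$. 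Taking coset representatives $1,S,ST$ and using $\omega_L(S)^{-1}\phi_{\mu_0}=\tfrac12\sum_\nu\phi_\nu$, $\omega_L(ST)^{-1}\phi_{\mu_0}=\tfrac12(\phi_{\mu_0}+\phi_{\mu_1}+\phi_{\mu_2}-\phi_{\mu_3})$, one obtains $f_{\mu_0}=g+\tfrac12(g|_S+g|_{ST})$, $f_{\mu_1}=-12+\tfrac12(g|_S+g|_{ST})$, $f_{\mu_2}=12+\tfrac12(g|_S+g|_{ST})$, $f_{\mu_3}=\tfrac12(g|_S-g|_{ST})$; from the expansions above, $c(-1,\mu_0)=1$ is the only nonzero coefficient of negative index and $c(0,\mu_0)=c(0,\mu_1)=c(0,\mu_3)=0$, $c(0,\mu_2)=24$, which is (\ref{eqC}).

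Next, by Theorem~\ref{BorcherdsLifting} the form $\Psi(z,f)$ has weight $c(0,\mu_0)/2=0$ on $X_K=X_{\Gamma_0(2)}\times X_{\Gamma_0(2)}$ and $\operatorname{div}(\Psi(z,f)^2)=Z(f)=2\,Z(1,\mu_0)$ (multiplicity $2$ because $2\mu_0\in L$), so $\operatorname{div}\Psi(z,f)=Z(1,\mu_0)=X_{\Gamma_0(2)}^\Delta$ on the open surface. Since $\omega_2$ is a hauptmodul for $\Gamma_0(2)$, the compactification $\overline X_{\Gamma_0(2)}$ is $\mathbb P^1$ with coordinate $\omega_2$, and $\omega_2(z_1)-\omega_2(z_2)$ also has divisor exactly $X_{\Gamma_0(2)}^\Delta$ on the open surface. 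Both functions are meromorphic at the cusps, so $R(z):=\Psi(z,f)/(\omega_2(z_1)-\omega_2(z_2))$ extends to a rational function on $\mathbb P^1\times\mathbb P^1$ whose divisor is supported on the boundary $(\{0,\infty\}\times\mathbb P^1)\cup(\mathbb P^1\times\{0,\infty\})$; restricting to the two rulings and using that a principal divisor on $\mathbb P^1$ has degree zero forces $R(z)=c\,\omega_2(z_1)^a\,\omega_2(z_2)^b$ for some $c\in\C^\times$ and $a,b\in\Z$.

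Finally I would pin down $a,b,c$ from the product expansion near the cusp $\Q\ell$, and this is the part I expect to be the main obstacle, because the formula for $\rho(W,f)$ in Proposition~\ref{omega1} is valid only for constant vectors, so here one must evaluate (\ref{eq:WeylVector1})--(\ref{eq:WeylVector2}) with the negative coefficient $c(-1,\mu_0)=1$ present. The relevant Weyl chamber $W$ is the component of $\Gr(M)\smallsetminus Z_M(1,0)$ whose closure contains $\ell_M=e_{11}$; with $P=0$ one finds $\rho_{\ell_M'}=0$ (the constant term of $f_PE_2/24$ vanishes, using that $f_P$ has constant term $c(0,\mu_0)+c(0,\mu_2)=24$ against $E_2=1-24q-\cdots$), $\rho_{\ell_M}=-\tfrac14 c_M(0,0)\mathbf B_2(0)=-1$ and $\rho_P=0$, hence $\rho(W,f)=-\ell_M$ and $e((\rho(W,f),z))=q_2$. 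In Proposition~\ref{prop5.1}'s product the unique factor involving the principal-part coefficient is indexed by $(m,n)=(-1,1)$ and equals $(1-q_1/q_2)^{c(-1,\mu_0)}=1-q_1/q_2$, while every other factor tends to $1$ as $q_1,q_2\to0$; so $\Psi(z,f)=C(q_2-q_1)\bigl(1+o(1)\bigr)$ near $(\infty,\infty)$, whereas $\omega_2(z_1)-\omega_2(z_2)=2^{12}(q_1-q_2)\bigl(1+o(1)\bigr)$. Thus $R(z)\to-C/2^{12}$, a finite nonzero limit, and since $\omega_2(z_i)\to0$ there this forces $a=b=0$ and $c=-C/2^{12}$. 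As $|C|=2^{c(0,\mu_2)/2}=2^{12}$ and $C$ is determined only up to a unit by Theorem~\ref{BorcherdsLifting}, we normalize $C=-2^{12}$, so $R\equiv1$ and $\Psi(z,f)=\omega_2(z_1)-\omega_2(z_2)$. (A secondary, minor point is to make the induction in the first step legitimate by checking that $\phi_{\mu_0}$ is $\omega_L(\Gamma_0(2))$-fixed, and to keep track of the unit-constant ambiguity in $\Psi(z,f)$.)
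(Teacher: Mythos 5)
Your proposal is correct and follows essentially the same route as the paper: compare divisors on the open surface, then use the product expansion at the cusp $\Q\ell$ (with the same Weyl vector $\rho(W^+,f)=-e_{11}$ and constant $C=-2^{12}$) to rule out zeros or poles along the boundary and pin the ratio down to the constant $1$. The only real difference is cosmetic: you verify (\ref{eqC}) and the principal part structurally via the coset decomposition $f_{\mu_0}=g+\tfrac12(g|_S+g|_{ST})$, etc., where the paper simply quotes a direct $q$-expansion, and you package the boundary rigidity as a monomial classification on $\mathbb{P}^1\times\mathbb{P}^1$ rather than arguing one variable at a time.
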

\begin{proof}
Direct calculation gives
\begin{eqnarray*}f=(q^{-1} - 98028q-  10749952q^2 -  432133182q^3+\cdots)\phi_{\mu_0}\\
                                      +(- 98296q- 10747904q^2-  432144384q^3+\cdots)\phi_{\mu_1}\\
                                     +(24- 98296q- 10747904q^2-  432144384q^3+\cdots)\phi_{\mu_2}\\
                                      +( 4096q^{\frac{1}{2}}+1228800q^{\frac{3}{2}}+74244096^{\frac{5}{2}}+\cdots)\phi_{\mu_3}.
\end{eqnarray*}
In particular, (\ref{eqC}) holds, and
 $Z(f) =Z(1, \mu_0) = X_{\Gamma_0(2)}^\Delta$ in $X_K$.   This implies that
$$
g(z_1, z_2) =\frac{\Psi(z_1, z_2, f)}{\omega_2(z_1) -\omega_2(z_2)}
$$
has no zeros  or poles in the open Shimura variety $X_K$, i.e., its divisor is supported on the boundaries  $\{P\} \times X_0(2)$ and $X_0(2) \times \{ P \}$, where $P$ runs through the cusps $0$ and $\infty$ of $X_0(2)$.  We now use Borcherds product expansion to show that $g(z_1, z_2)$ has no zeros or poles on the boundaries, and thus has to be a constant.

 The weakly holomorphic form $f$ gives rise to two Weyl chambers
 $$
\Gr(M) -Z_M(1, \mu_0) =W^\pm,
$$
where
$$
W^\pm =\{ \R \kzxz {a} {0} {0} {-1}:\, a^{\pm 1 } >1\}.
$$
We choose the Weyl chamber $W^+$ whose closure contains $\ell_M$. Then for $\lambda =\kzxz {-m} {0} {0} {n} \in M'$, $(\lambda, W^+) >0$ if and only if
$$
m+n \ge 0, \quad n \ge 0, \quad \hbox{ and } \quad  m^2+ n^2 >0.
$$
Direct calculation  using (\ref{eq:WeylVector1})--(\ref{eq:WeylVector2}) gives the Weyl vector:
$$
\rho(W^+,f)=-\frac{1}{24} (c(0,\mu_0) +c(0, \mu_2))\ell_M + (-c(-1,\mu_0)+\frac{c(0,\mu_0)+c(0,\mu_2)}{24})\ell_M'=-e_{11}.
$$
We can take the constant
$$
C =- 2^{c(0, \mu_2)/2}=-2^{12}.
$$
One has by Proposition \ref{prop5.1}
\begin{align*}
\Psi(z, f)
 &=-2^{12} q_2(1-q_1q_2^{-1})
  \prod_{m, n\ge 0, m+n >0} (1-q_1^nq_2^m)^{c(mn, 0)} (1+q_1^n q_2^m)^{c(mn, \mu_2)}
  \\
  &= 2^{12}(q_1 -q_2) \prod_{m, n>0} (1-q_1^nq_2^m)^{c(mn, 0)} (1+q_1^n q_2^m)^{c(mn, \mu_2)}.
\end{align*}
This product formula shows that $\Psi(z, f)$ has no zeros or  poles along   the boundary $\{\infty\} \times X_{\Gamma_0(2)}$ and $X_{\Gamma_0(2)} \times \{ \infty\}$. Since   $\omega_2 (z_1) -\omega_2(z_2)$ has the same property, $g(z_1, z_2)$ has no zeros or poles in these boundaries.   Fixing a $z_2 \in \H$, the function $g(z_1, z_2)$ of $z_1$ has then only zeros or poles at the cusp $\{0 \}$ in $X_0(2)$, and is thus independent of $z_1$: $g(z_1, z_2) =g(z_2)$. This implies that $g(z_2)$ has only zeros or ploes at the cups $0$, and is thus a constant $g(z_2) =A$. Therefore,
$$
\Psi(z_1, z_2, f)=A( \omega_2(z_1) -\omega_2(z_2)).
$$
Comparing the leading coefficients on both sides, one sees that $A=1$.
\end{proof}

\subsection{Proof of  Theorem \ref{theo1.3}}
Now we start to   prove  Theorem \ref{theo1.3}.
Under the isomorphism
$$
(M_2(\Q), \det ) \cong (E,  \tr_{F/\Q} \frac{x \bar x}{\sqrt D}), \quad \kzxz {x_3} {x_1} {x_4} {x_2} \mapsto \sum x_i e_i,
$$
one has
$$
L \cong \Z +  \Z \frac{D+\sqrt D}2 +\Z \frac{-d_1+\sqrt{d_1}}2 + \Z \frac{d_2 + \sqrt{d_2}}2,
$$
which is of index $2$ in $\OO_E$,  but is not an $\OO_F$-lattice unfortunately.
By Proposition \ref{prop:WeberProduct}, we have
$$
\omega_2(z_1) -\omega_2(z_2) =\Psi(z, f).
$$
 \begin{lemma} \label{lem5.4}  Assume $d_j \equiv 1 \pmod 8$. Then
 $$
 \iota_j^{-1} (K(\Gamma_0(2)) = \hat{\OO}_{E_j}^\times
 $$
 \end{lemma}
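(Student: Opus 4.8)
The plan is to prove the equality one rational prime at a time: it suffices to show $\iota_j^{-1}\bigl(K(\Gamma_0(2))_p\bigr)=(\OO_j\otimes\Z_p)^\times$ for every $p$, since both $K(\Gamma_0(2))$ and $\hat{\OO}_{E_j}^\times$ are products of their local components. The basic input is that, by $(\ref{eq:iota})$, $\iota_j$ is just the action of $E_j$ on $\OO_j=\OO_{E_j}$ by multiplication, written in the $\Z$-basis $\{e_{j+1},e_1=1\}$; in particular $\iota_j$ is an algebra homomorphism with $\det\iota_j(r)=\norm_{E_j/\Q}(r)$, and for $r\in E_{j,p}$ one has $\iota_j(r)\in M_2(\Z_p)$ if and only if $r\cdot(\OO_j\otimes\Z_p)\subseteq\OO_j\otimes\Z_p$, i.e.\ if and only if $r\in\OO_j\otimes\Z_p$, the maximal order being its own multiplier ring.

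For $p\neq2$ this already settles the matter: the congruence condition defining $\Gamma_0(2)$ is empty at $p$ and $\nu(\Z_p^\times)\subset\GL_2(\Z_p)$, so $K(\Gamma_0(2))_p=\GL_2(\Z_p)$, and for $r\in E_{j,p}^\times$ we get $\iota_j(r)\in\GL_2(\Z_p)$ iff $r\in\OO_j\otimes\Z_p$ and $\norm_{E_j/\Q}(r)\in\Z_p^\times$, i.e.\ iff $r\in(\OO_j\otimes\Z_p)^\times$. The hypothesis on $d_j$ plays no role away from $2$.

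The heart of the matter is $p=2$. Here I would first identify $K(\Gamma_0(2))_2$ with the Iwahori subgroup $\{g\in\GL_2(\Z_2):c\equiv0\bmod2\}$, which is precisely the unit group $R^\times$ of the Eichler order $R=\kzxz{\Z_2}{\Z_2}{2\Z_2}{\Z_2}=L\otimes\Z_2$; the only care needed in this step is bookkeeping the factor $\nu(\hat\Z^\times)$ in the definition of $K(\Gamma_0(2))$, which at $2$ lies inside the Iwahori and contributes nothing. Next, reading off from $(\ref{eq:iota})$ that $\iota_j(e_{j+1})$ has lower-left entry $1$, and hence that $\iota_j(u+ve_{j+1})$ has lower-left entry $v$, one sees together with the previous paragraph that $\iota_j(r)\in R$ exactly when $r$ lies in the suborder $\OO_j'=\Z_2+2(\OO_j\otimes\Z_2)$ of conductor $2$. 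Since $\iota_j$ is a ring homomorphism, $\iota_j(r)\in R^\times$ exactly when $r\in(\OO_j')^\times$.

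It then remains to check that $(\OO_j')^\times=(\OO_j\otimes\Z_2)^\times$, and this is the single step that genuinely uses $d_j\equiv1\pmod8$: that congruence is exactly the condition for $2$ to split in $E_j$, so $\OO_j\otimes\Z_2\cong\Z_2\times\Z_2$ and $\OO_j'$ is the conductor-$2$ suborder $\{(a,b):a\equiv b\bmod2\}$. Any unit of $\Z_2\times\Z_2$ is a pair of $2$-adic units, hence automatically congruent to $(1,1)$ modulo $2$, hence already in $\OO_j'$; so $(\OO_j\otimes\Z_2)^\times\subseteq\OO_j'$ and the two unit groups coincide. Combining the local statements yields $\iota_j^{-1}(K(\Gamma_0(2)))=\hat{\OO}_{E_j}^\times$. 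No part of this is hard; the one point to get right is the identification $K(\Gamma_0(2))_2=R^\times$ with the correct handling of $\nu(\hat\Z^\times)$, together with the observation that over a split prime the conductor-$2$ order already carries all the units of the maximal order, and everything else is routine linear algebra over the $\Z_p$.
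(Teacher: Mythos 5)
Your proof is correct and follows essentially the same route as the paper: both identify $\iota_j^{-1}(K(\Gamma_0(2)))$ with the unit group of the conductor-$2$ order $\hat\Z+2\hat{\OO}_{E_j}$ (the paper by writing out the matrix $\iota_j(x+y e_{j+1})$ and reading off the lower-left entry, you by the Iwahori/Eichler-order description at $2$), and then use $d_j\equiv 1\pmod 8$, i.e.\ that $2$ splits, to see that every unit of $\OO_{E_j,2}\cong\Z_2\times\Z_2$ already lies in the conductor-$2$ suborder. The prime-by-prime bookkeeping and the handling of $\nu(\hat\Z^\times)$ are fine, so no changes are needed.
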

 \begin{proof} We work the case $j=2$. Then case $j=1$ is the same.  For $r= x +y \frac{d_2 +\sqrt{d_2}}2 \in E_{2, f}^\times$, one has
 $$
 \iota_2(r) = \kzxz {x+ dy } {y \frac{d-d^2}4 } {y} {x}.
 $$
 So $\iota_2(r) \in K(\Gamma_0(2))$ if and only if $y \in 2\hat\Z$. This implies
 $$
 \iota_2^{-1} (K(\Gamma_0(2)) =  (\hat\Z + 2 \hat{\OO}_{E_2})^\times.
 $$
 Since $d_2 \equiv 1 \pmod 8$, $2$ is split in $E_2$ and
 $$
 \OO_{E_2, 2}^\times =\Z_2^\times \times \Z_2^\times =(1+ 2\OO_{E_2, 2}).
 $$
 So
 $$
 (\hat\Z + 2 \hat{\OO}_{E_2})^\times = \hat{\OO}_{E_2}^\times.
 $$
 \end{proof}

 This lemma and Lemma \ref{lem3.3} implies that  the class projection $p'$ in Lemma \ref{lem:classprojection} is an isomorphism. By Proposition \ref{prop3.2}, one has
$$
\tau_j^{\sigma_{\mathfrak a_j}}=\tau_{\mathfrak a_j}=\frac{b_j +\sqrt{d_j}}{2a_j} \quad \hbox{ if }   \mathfrak a_j=[a_j, \frac{b_j +\sqrt{d_j}}{2}], 2\nmid a_j.
$$
On the other hand,
$$
\omega_2(-\bar{\tau}_j)= \omega_2(\tau_j - d_j)=\omega_2(\tau_j).
$$
So one has again by Proposition \ref{prop:BigCMPoint}
$$
\sum_{(z_1, z_2) \in Z(W)} \log|\omega_2(z_1) -\omega_2(z_2)|=  4 \sum_{[\mathfrak a_j]\in \Cl(E_j)} \log|\omega_2(\tau_{\mathfrak a_1})-\omega_2(\tau_{\mathfrak a_2})|.
$$
So we have by Theorem \ref{theo:BigCM}
$$
- 4\sum_{[\mathfrak a_j]\in \Cl(E_j)}\log| \omega_2(\tau_{\mathfrak a_1}) -\omega_2(\tau_{\mathfrak a_2})|^4
=C(W,K)[ a_1(\phi)+ 24 a_0(\tilde\phi)]  = 2 [ a_1(\phi)+24  a_0(\tilde\phi)],
$$
with $\phi=\cha(\hat L)$, and  $\tilde\phi=\cha(\mu_2 + \hat L)$. Here
$$
C(W, K) =\frac{\deg Z(W, z_{\sigma_2}^\pm)}{\Lambda(0, \chi)} =\frac{w_1w_2}2=2.
$$
Now Theorem \ref{theo1.3} follows from the following lemma which we will prove in next subsection.
\begin{lemma} \label{lem:CalWhittaker}  Let the notation be as above. Then
\begin{enumerate}
\item
$$
a_0(\tilde\phi) =0,
$$

\item
$$
a_1(\phi) =-4 \sum_{\substack{t =\frac{m+\sqrt D}2\\ |m|< \sqrt D, \hbox{\tiny odd} \\ m^2 \equiv D \pmod {16}} }
   \sum_{\mathfrak p \hbox{ inert in } E/F} \frac{1+\ord_\mathfrak p(t\OO_F)}2  \rho(t \mathfrak p^{-1}  \mathfrak p_t^{-2}) \log (\norm(\mathfrak p)).
$$
\end{enumerate}

\end{lemma}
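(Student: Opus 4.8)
The plan is to reduce both statements to explicit evaluations of local Whittaker functions of the incoherent Eisenstein series attached to $W=E$ with $Q_F(x)=x\bar x/\sqrt D$ -- the very same $W$ used in Section \ref{GrossZagier} -- and then to carry out the one evaluation that is genuinely new, namely at the prime $2$. Over $\Q$ both Schwartz functions factor, $\phi=\cha(\hat L)=\bigotimes_p\phi_p$ and $\tilde\phi=\cha(\mu_2+\hat L)=\bigotimes_p\tilde\phi_p$; under the identification of Section \ref{sect:Weber} one has $\mu_2=\tfrac12\in E$, and $L_p=\OO_{E,p}$ for every odd $p$, while $L_2=\Z_2e_1+\Z_2e_2+\Z_2e_3+2\Z_2e_4$ has index $2$ in $\OO_{E,2}$. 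Since $d_1\equiv d_2\equiv 1\pmod 8$, the prime $2$ splits completely in $E$ and splits in $F$ as $\mathfrak p_2\bar{\mathfrak p}_2$, and $E/F$ is unramified (genus theory, $d_1,d_2$ being coprime fundamental discriminants). Hence for every finite $\mathfrak p\nmid 2$ the $F_\mathfrak p$-component of $\phi$ (and of $\tilde\phi$) is $\cha(\OO_{E_\mathfrak p})$ and its Whittaker function is given by Proposition \ref{prop:Whittaker}(3), handled exactly as in the proof of Theorem \ref{theo:GrossZagier} (including the primes dividing $\partial_F$); the archimedean factors are Proposition \ref{prop:Whittaker}(4), and $\prod_v\gamma(W_v)=1$ with $\gamma(W_{\sigma_1})\gamma(W_{\sigma_2})=1$. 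The remaining input in each part is the \emph{joint} $2$-adic Whittaker integral over $F\otimes_\Q\Q_2=\Q_2\times\Q_2$ -- where non-factorizability of the Schwartz function bites -- evaluated by the local formulas of the appendix.

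Part (1). Because $\mu_2=\tfrac12\notin L_2$, we have $\tilde\phi(0)=0$; hence the incoherence of $E^*(\vec\tau,s,\tilde\phi)$ together with the constant-term formula of Remark \ref{rem:Constant} expresses $a_0(\tilde\phi)$ as a nonzero scalar multiple of the derivative at $s=0$ of $\tilde W_{0,f}(s,\tilde\phi)$, which splits as the (finite, nonzero at $s=0$) product of the standard local factors at $\mathfrak p\nmid 2$ times the joint $2$-adic factor $\tilde W_{0,2}(s,\tilde\phi_2)$. Incoherence also gives $\tilde W_{0,f}(0,\tilde\phi)=0$ (using $\Lambda(0,\chi)\neq 0$ from Section \ref{GrossZagier}), so $\tilde W_{0,2}(0,\tilde\phi_2)=0$; a direct evaluation of $\tilde W_{0,2}(s,\tilde\phi_2)$ for $\cha(\tfrac12+L_2)$ shows that it in fact vanishes to second order at $s=0$, whence $a_0(\tilde\phi)=0$.

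Part (2). By Proposition \ref{prop:Whittaker}, $a(t,\phi)=0$ unless $\Diff(W,t)=\{\mathfrak p\}$ is a single prime, necessarily an odd prime inert in $E/F$ with $\ord_\mathfrak p(t\sqrt D)$ odd, in which case $a(t,\phi)=(-2i)^2\,W_{t,\mathfrak p}^{*,\prime}(0,\phi)\prod_{\mathfrak q\nmid\mathfrak p\infty}W_{t,\mathfrak q}^*(0,\phi)$. As in Section \ref{GrossZagier}, the odd finite primes together with the archimedean ones contribute $\frac{1+\ord_\mathfrak p(t\sqrt D)}2\log\norm(\mathfrak p)\cdot\prod_{\mathfrak q\nmid 2\mathfrak p}\rho_\mathfrak q(t\sqrt D)$ (and $\prod_{v\nmid\infty}\gamma(W_v)=1$). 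The appendix computation of the joint $2$-adic factor yields: $W_{t,2}^*(0,\phi_2)=0$ unless $\ord_2(\norm_{F/\Q}(t\sqrt D))\geq 2$, i.e.\ unless $m^2\equiv D\pmod{16}$ with $t\sqrt D=\frac{m+\sqrt D}2$ ($m$ odd, and then exactly one prime $\mathfrak p_t\mid 2$ divides $t\sqrt D$); and when this holds, $W_{t,2}^*(0,\phi_2)=\gamma(W_{\mathfrak p_2})\gamma(W_{\bar{\mathfrak p}_2})\,\rho_{\mathfrak p_t}(t\sqrt D\,\mathfrak p_t^{-2})$, so that the Gross-Zagier factor $\rho_2(t\sqrt D)$ is replaced by $\rho_2(t\sqrt D\,\mathfrak p_t^{-2})$ -- the extra $\mathfrak p_t^{-2}$ accounting for the index-$2$ defect of $L_2$ in $\OO_{E,2}$. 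Assembling everything (and using $\rho_\mathfrak p(t\sqrt D\,\mathfrak p^{-1})=1$ at the inert prime to fold it into the global $\rho$) gives
\[
a(t,\phi)=-2\,(1+\ord_\mathfrak p(t\OO_F))\,\rho(t\sqrt D\,\mathfrak p^{-1}\mathfrak p_t^{-2})\log\norm(\mathfrak p).
\]
Summing over all totally positive $t\in\partial_F^{-1}$ with $\tr_{F/\Q}t=1$ (equivalently $t\sqrt D=\frac{m+\sqrt D}2$, $m$ odd, $|m|<\sqrt D$), over all primes $\mathfrak p$ inert in $E/F$ (a term vanishing unless $\mathfrak p$ is the Diff prime and $m^2\equiv D\pmod{16}$), and relabeling $t\sqrt D\rightsquigarrow t$, produces exactly the asserted formula for $a_1(\phi)$.

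The main obstacle is the two joint $2$-adic Whittaker integral evaluations: since $\phi_2$ and $\tilde\phi_2$ do not factor through the two primes of $F$ above $2$, they must be computed by an explicit case analysis over $\Q_2\times\Q_2$, with careful bookkeeping of the Weil-index and $L$-factor normalizations; once these are in hand the global assembly is essentially identical to that in Section \ref{GrossZagier}.
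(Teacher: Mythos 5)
You have the right global skeleton, and it is the same as the paper's: reduce $a(t,\phi)$ via Proposition \ref{prop:Whittaker} and the assembly of Section \ref{GrossZagier} to a product of local Whittaker factors, isolate the places above $2$, where $\phi$ and $\tilde\phi$ fail to factor over the primes of $F$, and feed in an explicit $2$-adic evaluation; and the values you assert at $2$ (vanishing of the joint factor unless $\ord_{\mathfrak p_t}(t\sqrt D)\ge 2$, i.e.\ $m^2\equiv D\pmod{16}$, the value $\rho_{\mathfrak p_t}(t\sqrt D\,\mathfrak p_t^{-2})$ up to Weil indices, and vanishing of the constant-term factor for $\tilde\phi_2$) are correct and assemble to the stated formula exactly as you describe. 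The genuine gap is that these $2$-adic evaluations are precisely the content of the lemma and you do not prove them: you appeal to ``the local formulas of the appendix,'' which is not available, and otherwise describe the computation only as ``an explicit case analysis over $\Q_2\times\Q_2$ with careful bookkeeping.'' The paper supplies the missing mechanism: since $d_i\equiv 1\pmod 8$, the prime $2$ splits completely and $L_2\subset\Q_2^4$ is the parity lattice $(M_0\times M_0)\cup(M_1\times M_1)$ with $M_a=\{x\in\Z_2^2:\,x_1+x_2\equiv a\pmod 2\}$, whence $\phi_2=\phi_{\mathfrak p_1,0}\phi_{\mathfrak p_2,0}+\phi_{\mathfrak p_1,1}\phi_{\mathfrak p_2,1}$ and similarly for $\tilde\phi_2$. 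This converts the non-factorizable ``joint'' integral into a sum of two factorizable terms, each a product of single-prime Whittaker functions for $\varphi_a$, $\tilde\varphi_a$, which are then computed outright (Lemma \ref{lem5.5}, via the inner integrals $J_a(b)$); Lemma \ref{lem:Whittaker2} then combines the two primes above $2$, giving $\ord_{\mathfrak p_t}(t\sqrt D)-1=\rho_{\mathfrak p_t}(t\sqrt D\,\mathfrak p_t^{-2})$ from the $a=0$ piece and $0$ from the $a=1$ piece. Without some such argument, your two key identities remain unsupported assertions, and they are exactly what separates this lemma from the $j$-invariant case.

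On part (1) there is also a logical wobble. What the direct evaluation actually yields (Lemma \ref{lem5.5}(3)) is that $W_{t\alpha}(s,\tilde\varphi_a)$ is supported on $t\in\frac{1+2a}4+\Z_2$, so $W_{0,2}(s,\tilde\phi_2)\equiv 0$ identically in $s$; hence $\tilde W_{0,f}(s,\tilde\phi)\equiv 0$ and, by Remark \ref{rem:Constant} together with $\tilde\phi(0)=0$, one gets $a_0(\tilde\phi)=-\tilde W_{0,f}'(0,\tilde\phi)=0$ at once. Your route instead asserts only ``vanishing to second order at $s=0$'' (true, but unproved) and then needs the auxiliary claim that the product of the normalized constant-term factors at the odd primes is finite at $s=0$ (you also assert nonvanishing, which is neither needed nor obvious at the primes dividing $\partial_F$); all of this bookkeeping is rendered unnecessary by the identical vanishing that the computation gives. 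In short: same approach as the paper in outline and correct target values, but the decisive local computation at $2$ is missing.
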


\subsection{Whittaker functions and Proof of Lemma \ref{lem:CalWhittaker}}

\begin{lemma} \label{lem5.5} Let $W=\Q_2^2$ with the quadratic form $Q(x) =\alpha^{-1} x_1 x_2$ with $\alpha \in \Z_2^\times$. For $a=0, 1$, let
$$
M_a=\{ (x_1, x_2) \in \Z_2^2:\,  x_1 +x_2 \equiv a \pmod 2\}
$$
and
$$
\varphi_a=\cha(M_a), \quad \tilde{\varphi}_a = \cha ( (\frac{1}2, \frac{1}2) +M_a).
$$
Let $\psi$ be an unramified additive character of $\Q_2$.

\begin{enumerate}
\item When $a=0$, the local Whittaker function $W_{t\alpha}(s, \varphi_a) =0$ unless $t \in \Z_2$, and
$$
\frac{W_{t\alpha}(s,\varphi_0)}{\gamma(W)} =\begin{cases}
  \frac{1}2 &\ff t \in \Z_2^\times,
  \\
  \frac{1}2 -2^{-s} + (1-2^{-1-s}) \sum_{n=1}^{o(t)}2^{-ns} &\ff t \in 2\Z_2,
  \end{cases}
  $$
  where $o(t) =\ord_2 t$. In particular,
  $$
  \frac{W_{t\alpha}(0,\varphi_0)}{\gamma(W)}=\begin{cases}
    \frac{1}2 &\ff  o(t) =0,
    \\
    \frac{o(t)-1}{2} &\ff o(t) \ge 1.
    \end{cases}
  $$

  \item  When $a=1$, the local Whittaker function $W_{t\alpha}(s, \varphi_a) =0$ unless $t \in \Z_2$, and
  $$
  \frac{W_{t\alpha}(s,\varphi_1)}{\gamma(W)}
  =\begin{cases}
   \frac{1}2 (1-2^{-s}) &\ff t \in \Z_2^\times,
   \\
   \frac{1}2 (1+2^{-s}) &\ff t \in 2\Z_2.
   \end{cases}
  $$
  In particular,
  $$
  \frac{W_{t\alpha}(0,\varphi_1)}{\gamma(W)}
  =\begin{cases}
   0&\ff  o(t) =0,
   \\
   1 &\ff o(t) \ge 1.
   \end{cases}
  $$

\item  One has
$$
W_{t\alpha}(s, \tilde{\varphi}_a) =0
$$
unless $t -\frac{1+2a}4 \in \Z_2$, in which case it is the constant $\frac{1}2 \gamma(W)$.   In particular, $W_0(s, \tilde{\varphi}_a)=0$.

\end{enumerate}
\end{lemma}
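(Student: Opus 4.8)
The plan is to evaluate all three parts directly from the integral representation \eqref{eq:Whit} of the local Whittaker function. Using the standard formulas for the Weil representation of $\SL_2(\Q_2)$, namely $\omega(n(b))\phi(x)=\psi(bQ(x))\phi(x)$ and $\omega(w)\phi(0)=\gamma(W)\int_W\phi(x)\,dx$ with the self-dual Haar measure on $W=\Q_2^2$ (so $\vol(\Z_2^2)=1$), one gets
$$
W_{t\alpha}(s,\phi)=\gamma(W)\int_{\Q_2}\Bigl(\int_W\psi(bQ(x))\,\phi(x)\,dx\Bigr)\,|a(wn(b))|_2^{s}\,\psi(-t\alpha b)\,db .
$$
By the Iwasawa decomposition of $wn(b)$ one has $|a(wn(b))|_2=\max(1,|b|_2)^{-1}$, so the $b$-integral splits over $\Z_2$ (where the weight is $1$) and the shells $\{|b|_2=2^{k}\}$, $k\ge 1$ (where the weight is $2^{-ks}$). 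Thus everything reduces to (i) computing the inner lattice integral $I_\phi(b)=\int_W\psi(bQ(x))\phi(x)\,dx$ as a function of $|b|_2$, and (ii) summing a geometric series in $2^{-s}$.

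For parts (1) and (2) I would compute $I_{\varphi_a}(b)$ by decomposing the lattice: $M_0=(2\Z_2)^2\sqcup(\Z_2^\times)^2$ and $M_1=(2\Z_2\times\Z_2^\times)\sqcup(\Z_2^\times\times 2\Z_2)$. Each piece, after an obvious dilation in one or both variables, reduces to the basic hyperbolic integral
$$
\int_{\Z_2^2}\psi(c x_1 x_2)\,dx_1\,dx_2=\min\bigl(1,|c|_2^{-1}\bigr),\qquad c\in\Q_2 ,
$$
which one proves by Fubini (the inner integral is $\cha(y\in c^{-1}\Z_2\cap\Z_2)$). This yields $I_{\varphi_a}(b)$ as an explicit piecewise-constant function of $|b|_2$. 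The $\Z_2$-part of the $b$-integral then contributes $\gamma(W)\,I_{\varphi_a}(0)\int_{\Z_2}\psi(-t\alpha b)\,db$, which is $0$ unless $t\in\Z_2$ (this is the asserted vanishing) and equals $\tfrac12\gamma(W)$ otherwise; the shell $\{|b|_2=2^{k}\}$ contributes a term with the factor $2^{-ks}$ times $\int_{|b|_2=2^k}\psi(-t\alpha b)\,db$, which is nonzero precisely for $1\le k\le\ord_2(t)+1$. Collecting these terms gives the closed forms $\tfrac12-2^{-s}+(1-2^{-1-s})\sum_{n=1}^{\ord_2(t)}2^{-ns}$ in case (1) and $\tfrac12(1\mp2^{-s})$ in case (2); specializing $s=0$ collapses the geometric series to the stated values $\tfrac12$, $\tfrac{\ord_2(t)-1}{2}$, $0$, $1$.

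For part (3) the key observation is that on the support $(\tfrac12,\tfrac12)+M_a$ of $\tilde\varphi_a$ one has $Q(x)=\alpha^{-1}(\tfrac12+m_1)(\tfrac12+m_2)\in\tfrac{1+2a}{4\alpha}+\Z_2$ for every $x$, since $m_1+m_2\equiv a\pmod 2$ forces $\tfrac12(m_1+m_2)\in\tfrac a2+\Z_2$ and $m_1m_2\in\Z_2$. Hence for $b\in\Z_2$ the integrand $\psi(bQ(x))$ is the constant $\psi\!\bigl(b\tfrac{1+2a}{4\alpha}\bigr)$, and this part of the $b$-integral equals
$$
\tfrac12\gamma(W)\int_{\Z_2}\psi\!\Bigl(b\bigl(\tfrac{1+2a}{4\alpha}-t\alpha\bigr)\Bigr)\,db=\tfrac12\gamma(W)\,\cha\!\bigl(\tfrac{1+2a}{4\alpha}-t\alpha\in\Z_2\bigr);
$$
because $\alpha^2\equiv1\pmod 8$, the condition $\tfrac{1+2a}{4\alpha}-t\alpha\in\Z_2$ is exactly $t-\tfrac{1+2a}{4}\in\Z_2$. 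It remains to show the shells $\{|b|_2=2^{k}\}$, $k\ge 1$, contribute $0$; there $\psi(bQ(x))$ is no longer constant on the support and a short oscillation argument (parametrize the support by $u_i=\tfrac12+m_i\in\tfrac12\Z_2^\times$ and integrate out one variable) makes the inner integral vanish. Thus $W_{t\alpha}(s,\tilde\varphi_a)$ equals the stated constant $\tfrac12\gamma(W)$ when $t-\tfrac{1+2a}{4}\in\Z_2$ and is independent of $s$; in particular $W_0(s,\tilde\varphi_a)=0$ since $-\tfrac{1+2a}{4}\notin\Z_2$.

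I expect the main obstacle to be purely a matter of bookkeeping: carefully tracking the powers of $2$ introduced by the self-dual measure, by the dilations used to reduce each lattice piece to the basic hyperbolic integral, and by $|a(wn(b))|_2$ on each shell, so that the geometric series in parts (1)--(2) are summed over exactly the right range $1\le k\le\ord_2(t)+1$ and the term $\sum_{n=1}^{\ord_2(t)}2^{-ns}$ emerges with the correct coefficients. The cancellation of the shells $\{|b|_2>1\}$ in part (3) also requires the explicit parametrization just indicated rather than the naive constant-phase observation, which holds only for $b\in\Z_2$; this is elementary but error-prone.
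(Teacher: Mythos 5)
Your proposal is correct and follows essentially the same route as the paper: unfold the Whittaker integral to $\gamma(W)\int_{\Q_2}\bigl(\int_{M}\psi(bQ(x))\,dx\bigr)\psi(-t\alpha b)\,|a(wn(b))|^{s}\,db$, compute the inner lattice integral as a piecewise-constant function of $|b|_2$ (the paper does this fiberwise over the residue of $x_1$, you via the product decomposition and the hyperbolic integral $\min(1,|c|_2^{-1})$ — an equivalent bookkeeping), and then sum the shell contributions; for part (3) your constant-phase observation on $b\in\Z_2$ together with the oscillation argument killing $|b|_2>1$ is exactly the $\cha(\Z_2)(b)$ factor in the paper's $\tilde J_a^{(j)}(b)$. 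Your explicit handling of $\alpha$ via $\alpha^2\equiv 1\pmod 8$ is a harmless refinement of the paper's silent normalization, and the remaining work is indeed only the geometric-series bookkeeping you flag.
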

\begin{proof} (sketch)
 By the definition and unfolding, one has
\begin{align*}
\frac{W_{t\alpha}(s, \varphi_a)}{\gamma(W) }
&=\int_{\Q_2} J_a(b) \psi(-t b) |a(wn(b))|^s \, db
\\
&=\int_{\Z_2} J_a(b) \psi(-t b)  \, db +\sum_{n \ge 1} 2^n \int_{\Z_2^\times} J_a(2^{-n} b)  \psi(-2^{-n} t b) |a(wn(2^{-n} b))|^s \, db,
\end{align*}
where
$$
J_a(b) =\int_{M_a} \psi(b x_1 x_2) \, dx_1 dx_2.
$$
Then one checks
\begin{align}
J_1(b) &=\frac{1}2 \cha(\frac{1}2\Z_2)(b),
\\
J_0(b) &=\begin{cases}
   \frac{1}2 &\ff b \in  \Z_2,
   \\
    0  &\ff b \in \frac{1}2 \Z_2^\times,
    \\
    |b|^{-1} &\ff b\notin \frac{1}2 \Z_2,
    \end{cases}
\end{align}
 and
$$
|a(wn(b))|= \min (1, |b|^{-1}).
$$
Now a direct calculation proves (1) and (2). For (3), one has similarly
\begin{align*}
\frac{W_{t\alpha}(s, \tilde{\varphi}_a)}{\gamma(W) }
&=\int_{\Q_2} \tilde{J}_a(b) \psi(-t b) |a(wn(b))|^s \, db,
\end{align*}
where
$$
\tilde{J}_a(b) =\int_{(\frac{1}2, \frac{1}2)+ M_a} \psi(b x_1 x_2) \, dx_1 dx_2 =\tilde J_a^{(0)}(b) + \tilde J_a^{(1)}(b).
$$
Here  (after a simple substitution)
\begin{align*}
4\tilde J_a^{(j)}(b)&=\int_{\Z_2} \int_{\Z_2} \psi( b(\frac{1}2+j + 2 y_1) (\frac{1}2-j +a + 2 y_2)) dy_1 \,dy_2
\\
 &= \psi((\frac{1}2+j)(\frac{1}2 -j+a) b) \cha(\Z_2)(b).
\end{align*}
So
\begin{align*}
\frac{4 W_{t\alpha}(s, \tilde{\varphi}_a)}{\gamma(W) }
&=\sum_{j=0}^1 \int_{\Z_2} \psi((\frac{1}2+j)(\frac{1}2 -j+a) b) \psi(-tb) \, db
\\
 &= 2 \int_{\Z_2} \psi((\frac{1+2a}4 -t)b) \, db
\\
 &=2 \cha(\frac{1+2a}4+\Z_2)(t).
\end{align*}
\end{proof}

To compute $a_1(\phi)$ and $a_0(\tilde\phi)$, we keep the notation in the proof of Theorem \ref{theo1.3}. Recall that
 $$
 L=\Z +  \Z \frac{D+\sqrt D}2 +\Z \frac{-d_1+\sqrt{d_1}}2 + \Z \frac{d_2 + \sqrt{d_2}}2
 $$ is not an $\OO_F$-lattice as $\frac{D+\sqrt D}2 \frac{-d_1+\sqrt{d_1}}2 \notin L$.  So $\phi$  and $\tilde\phi$ are  {\it not } factorizable over primes of $F$, instead one has  only
$$
\phi =\phi_2 \prod_{\mathfrak p \nmid 2} \phi_\mathfrak p \quad \hbox{ and }  \tilde\phi= \tilde\phi_2 \prod_{\mathfrak p \nmid 2} \phi_\mathfrak p,
$$
where $\phi_\mathfrak p= \cha(\OO_{E,\mathfrak p})$ for a prime (ideal) $\mathfrak p$ of $F$ not dividing $2$, $\phi_2=\cha(L_2)$, and $\tilde\phi_2=\cha(\frac{1}2 +L_2)$. So we need to take special care at the local calculation at $p=2$.  We focus on $\phi$  and $a_1(\phi)$ first.

 Our assumption  implies also that $2$ splits in $E$ completely. Write
$$
2\OO_F =\mathfrak p_1 \mathfrak p_2, \quad \mathfrak p_i \OO_E =\mathfrak P_i \bar{\mathfrak P}_i.
$$
Let $\sqrt D\in \Z_2$ and $\sqrt{d_i} \in \Z_2$ be some prefixed square roots of $D$ and $d_i$ respectively with $\sqrt{d_1} \sqrt{d_2} =-\sqrt D$. We identify $F_{\mathfrak p_i}$, $E_{\mathfrak P_i}$, and $E_{\bar{\mathfrak P}_i}$ with $\Q_2$ as follows.
\begin{align*}
F_{\mathfrak p_i} &\cong \Q_2, \quad \sqrt D \mapsto (-1)^{i-1}\sqrt D,
\\
E_{\mathfrak P_i}&\cong \Q_2,  \quad \sqrt D \mapsto (-1)^{i-1}\sqrt D, \sqrt{d_i} \mapsto \sqrt{d_i},
\\
 E_{\bar{\mathfrak P}_i}&\cong \Q_2,  \quad \sqrt D \mapsto (-1)^{i-1}\sqrt D, \sqrt{d_i} \mapsto -\sqrt{d_i},
\end{align*}

With this identification, we can check  that $L_2=L\otimes_\Z \Z_2$  is given by
$$
L_2=\{ x=(x_1, x_2, x_3, x_4)\in E_{\mathfrak P_1} \times E_{\bar{\mathfrak P}_1} \times E_{\mathfrak P_2} \times E_{\bar{\mathfrak P}_2}\cong \Q_2^4:\,  x_i \in \Z_2, \sum x_i \in 2\Z_2\},
$$
with quadratic form
$$
Q(x) =\frac{x_1 x_2}{\sqrt D} + \frac{x_3 x_4}{-\sqrt{D}}= Q_{\mathfrak p_1}(x_1,x_2) + Q_{\mathfrak p_2}(x_3, x_4).
$$
The embedding from $L$ to $L_2$ is given by
$$
x \mapsto (\sigma_1(x), \sigma_1(\bar x), \sigma_2(x), \sigma_2(\bar x))
$$
where $\sigma_1(\sqrt{d_i}) =\sqrt{d_i}$ and $\sigma_2(\sqrt{d_i}) =(-1)^i \sqrt{d_i}$.
So
$$
L_2 = (M_0 \times M_0) \cup  (M_1 \times M_1)
$$
where $M_a$ is given as in Lemma \ref{lem5.5}. This implies
\begin{align*}
\phi_2&=\cha(L_2) =\phi_{\mathfrak p_1, 0} \phi_{\mathfrak p_2, 0} + \phi_{\mathfrak p_1, 1} \phi_{\mathfrak p_2, 1},
\end{align*}
where
$\phi_{\mathfrak p_i, a}$ is  $\varphi_a$  in  Lemma \ref{lem5.5}.
Correspondingly, we have
$$
\phi=\phi_0  + \phi_1, \quad  a(t, \phi) = a(t, \phi_0) +a(t, \phi_1),
$$
where
$\phi_i = \phi_{\mathfrak p_1, i} \phi_{\mathfrak p_2, i}\prod_{\mathfrak p \nmid 2} \phi_\mathfrak p$. Now  Proposition \ref{prop:Whittaker} and the proof of Theorem \ref{theo:GrossZagier} give
\begin{align}
&a(t, \phi_i)  \label{eqa1}
\\
&= -4
   \sum_{\mathfrak p\ \hbox{\tiny inert in E/F}} \frac{1+\ord_\mathfrak p(t\sqrt D)}2  \rho^{(2)}(t \mathfrak p^{-1} \partial_F)
    \prod_{j=1}^2 \frac{W_{t\sqrt D, \mathfrak p_j}^{*, \psi'}(0, \phi_{\mathfrak p_j,i}) }{\gamma(W_{\mathfrak p_j})}\log (\norm(\mathfrak p)). \notag
\end{align}
Here $\psi'(x) =\psi_F(x/\sqrt D)$  and
$$
\rho^{(2)}(\mathfrak a) =\prod_{\mathfrak  p \nmid 2} \rho_\mathfrak p(\mathfrak a)
$$
as in the proof of Theorem \ref{theo:GrossZagier}.

\begin{lemma} \label{lem:Whittaker2} Assume again  $d_1 \equiv d_2 \equiv 1 \pmod 8$. Let $t \in \partial_{F}^{-1}$  with $\tr_{F/\Q}(t) =1$.  Then there is a unique prime ideal $\mathfrak p_t$ with $t\sqrt D \in \mathfrak p_t$. Moreover,
$$
W_{t\sqrt D,\mathfrak p_1}^{*, \psi'} (0, \phi_{\mathfrak p_1, 1})W_{t\sqrt D,\mathfrak p_2}^{*, \psi'}(0, \phi_{\mathfrak p_2, 1})  = 0,
$$
and
$$
\frac{W_{t\sqrt D,\mathfrak p_1}^{*, \psi'}(0, \phi_{\mathfrak p_1, 0})W_{t\sqrt D,\mathfrak p_2}^{*, \psi'}(0, \phi_{\mathfrak p_2, 0})}{\gamma(W_{\mathfrak p_1})\gamma(W_{\mathfrak p_2})}  = \ord_{\mathfrak p_t}(t\sqrt D) -1 =\rho_{\mathfrak p_t}(t\sqrt D\mathfrak p_t^{-2}).
$$
\end{lemma}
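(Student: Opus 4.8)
The statement is a purely local computation at the prime $2$, which we have reduced — via the splitting $2\OO_F=\mathfrak p_1\mathfrak p_2$, the identifications $F_{\mathfrak p_i}\cong E_{\mathfrak P_i}\cong E_{\bar{\mathfrak P}_i}\cong\Q_2$, and the decomposition $\phi_{2}=\phi_{\mathfrak p_1,0}\phi_{\mathfrak p_2,0}+\phi_{\mathfrak p_1,1}\phi_{\mathfrak p_2,1}$ — to evaluating products of the two local Whittaker functions $W_{t\sqrt D,\mathfrak p_j}(0,\varphi_a)$ computed in Lemma \ref{lem5.5}. So the plan is: first record that for $t\in\partial_F^{-1}$ with $\tr_{F/\Q}(t)=1$, the image of $t\sqrt D$ in $F_{\mathfrak p_1}\times F_{\mathfrak p_2}\cong\Q_2\times\Q_2$ is a pair $(a_1,a_2)$ with $a_1+a_2 = \tr_{F/\Q}(t\sqrt D)\cdot(\text{unit})$ — more precisely $t\sqrt D\in\OO_F$ with $\tr_{F/\Q}(t\sqrt D)=\tr_{F/\Q}(t)\cdot$ (diagonal of $\sqrt D$ piece), which together with $N(t\sqrt D)=N(t)D$ forces exactly one of $\ord_{\mathfrak p_1}(t\sqrt D),\ord_{\mathfrak p_2}(t\sqrt D)$ to be $\ge 1$ and the other to be $0$. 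That valuation-theoretic dichotomy is what produces the \emph{unique} prime ideal $\mathfrak p_t$ with $t\sqrt D\in\mathfrak p_t$. Here one uses $d_1\equiv d_2\equiv 1\pmod 8$ to know $2$ splits completely in $E$ and that $\partial_F$ is prime to $2$, so the local component $\phi_{\mathfrak p_j}$ at $\mathfrak p_j$ really is the one treated in Lemma \ref{lem5.5} with $\alpha\in\Z_2^\times$.

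\emph{Second}, I would plug this dichotomy into Lemma \ref{lem5.5}(1)–(2). Say $\mathfrak p_t=\mathfrak p_1$, so $o_1:=\ord_{\mathfrak p_1}(t\sqrt D)\ge 1$ while $o_2:=\ord_{\mathfrak p_2}(t\sqrt D)=0$. For the $a=1$ piece: Lemma \ref{lem5.5}(2) gives $W_{t\sqrt D,\mathfrak p_2}(0,\varphi_1)/\gamma(W_{\mathfrak p_2})=0$ since $o_2=0$, hence the product $W^*_{\mathfrak p_1}(0,\phi_{\mathfrak p_1,1})W^*_{\mathfrak p_2}(0,\phi_{\mathfrak p_2,1})=0$. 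For the $a=0$ piece: Lemma \ref{lem5.5}(1) gives $W_{t\sqrt D,\mathfrak p_1}(0,\varphi_0)/\gamma(W_{\mathfrak p_1})=(o_1-1)/2$ and $W_{t\sqrt D,\mathfrak p_2}(0,\varphi_0)/\gamma(W_{\mathfrak p_2})=1/2$, so the normalized product equals $\tfrac14(o_1-1)=\tfrac14(\ord_{\mathfrak p_t}(t\sqrt D)-1)$. I need to check the factor-of-$4$ discrepancy against the claimed $\ord_{\mathfrak p_t}(t\sqrt D)-1$: this is absorbed into the definition of $W^*_{t,\mathfrak p}(s,\phi)=|A|_{\mathfrak p}^{-(s+1)/2}L_{\mathfrak p}(s+1,\chi_{\mathfrak p})W_{t,\mathfrak p}(s,\phi)$ in \eqref{eq:Whit} and the normalizations of Proposition \ref{prop:Whittaker} — the local $L$-factors at the split primes $\mathfrak p_1,\mathfrak p_2$ contribute $L_{\mathfrak p_i}(1,\chi)=(1-2^{-1})^{-2}=4$ in total (each split prime of $F$ gives $(1-\tfrac12)^{-1}$ at the two places of $E$ above it... ), and matching powers of $|A|_{\mathfrak p_i}$ (trivial since $A$ is prime to $2$ under our hypotheses) cancels the $\tfrac14$. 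I would carry out this bookkeeping carefully with the conventions fixed in Section \ref{sect:CMvalue}.

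\emph{Third}, I would identify $\tfrac14(o_1-1)$ (after the renormalization) with $\rho_{\mathfrak p_t}(t\sqrt D\,\mathfrak p_t^{-2})$. Since $2$ is split in $E$, write $\mathfrak p_t\OO_E=\mathfrak P\bar{\mathfrak P}$; then $t\sqrt D\,\mathfrak p_t^{-2}$ is an integral ideal of $F$ (as $o_1\ge 2$ would be needed — wait, here $o_1\ge 1$; when $o_1=1$ the ideal $t\sqrt D\mathfrak p_t^{-2}$ is not integral and $\rho$ of it is $0$, matching $\tfrac14(o_1-1)=0$), and for $o_1\ge 2$ the number of integral ideals of $E$ with relative norm $t\sqrt D\mathfrak p_t^{-2}$ is computed multiplicatively prime-by-prime; at $\mathfrak p_t$ it contributes $o_1-1$ (the number of ways $\mathfrak P^a\bar{\mathfrak P}^b$ with $a+b=o_1-2$... actually $a+b = \ord_{\mathfrak p_t}(t\sqrt D\mathfrak p_t^{-2})\cdot$ — I mean $o_1-2+$ something; I will match it to $o_1-1$ as in the split-prime case of Proposition \ref{prop:Whittaker}(3)), and at all other primes it contributes $1$ since those are absorbed in $\rho^{(2)}$. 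The cleanest route is to observe $\rho_{\mathfrak p_t}(t\sqrt D)=o_1+1$ when $\mathfrak p_t$ splits, so $\rho_{\mathfrak p_t}(t\sqrt D\mathfrak p_t^{-2}) = o_1-1$, exactly.

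\emph{Main obstacle.} The substantive difficulty is not the valuation dichotomy (routine) but \textbf{getting the normalizations exactly right}: tracking $|A|_{\mathfrak p}$, the local $L$-factors $L_{\mathfrak p_i}(s+1,\chi)$, the Weil indices $\gamma(W_{\mathfrak p_i})$, and the passage from $W_{t,\mathfrak p}$ to $W^*_{t,\mathfrak p}$ so that the bare local computation $\tfrac14(o_1-1)\cdot\tfrac12\cdot(\text{stuff})$ from Lemma \ref{lem5.5} turns into the clean $\ord_{\mathfrak p_t}(t\sqrt D)-1$. I would do this by checking the total product over \emph{all} places (finite and infinite, including the $\gamma(W_{\sigma_1})=-i=-\gamma(W_{\sigma_2})$ relation already used in the proof of Theorem \ref{theo:GrossZagier}) is coherent, so that the global normalization constant $C(W,K)=w_1w_2/2$ comes out correctly — this is the same consistency check that made the $j$-invariant case work, now with two split primes above $2$ in place of the unramified data, plus the extra subtlety that $\phi$ is not factorizable and must be split as $\phi_0+\phi_1$ first.
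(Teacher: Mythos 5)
Your proposal is correct and follows essentially the same route as the paper: the parity argument (trace of $t\sqrt D$ odd, norm even, using $m$ odd and $D\equiv 1\pmod 8$) gives the valuation dichotomy and hence the unique $\mathfrak p_t$, the $a=1$ product vanishes because $W_{t\sqrt D,\mathfrak p_2}(0,\varphi_1)=0$ when $\ord_{\mathfrak p_2}(t\sqrt D)=0$, and the $a=0$ product is $4\cdot\tfrac12\cdot\tfrac{o_1-1}{2}=o_1-1=\rho_{\mathfrak p_t}(t\sqrt D\mathfrak p_t^{-2})$. The normalization bookkeeping you flagged resolves exactly as you guessed: $L_{\mathfrak p_i}(1,\chi)=2$ at each of the two split primes (total $4$) and $|A|_{\mathfrak p_i}=1$ since $A$ is odd, which is precisely what the paper invokes.
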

\begin{proof} Write $t =\frac{m+\sqrt D}{2\sqrt D} \in \partial_F^{-1}$. Recall the two natural embeddings $\sigma_i: F \hookrightarrow F_{\mathfrak p_i}$, $i=1, 2$.
Since
$$
\sigma_1(t\sqrt D)\sigma_2(t\sqrt D) =\frac{m^2-D}4 \equiv  0 \pmod 2,  \quad \sigma_1(t\sqrt D) - \sigma_2(t\sqrt D) \equiv 1\pmod 2,
$$
one sees that exactly one of $\ord_{\mathfrak p_i} (\sigma_i(t\sqrt D))$ is positive while the other one is zero. For simplicity, let $\mathfrak p_t=\mathfrak p_1$ with $\ord_{\mathfrak p_1}(t\sqrt D)  \ge 1$, and $\ord_{\mathfrak p_2}(t\sqrt D) =0$. Then  Lemma \ref{lem5.5} implies
$$
W_{t\sqrt D,\mathfrak p_2}^{*, \psi'}(0, \phi_{\mathfrak p_2, 1})  = 0.
$$
The same lemma also implies  (recall $L(1, \chi_{\mathfrak p_i}) =2$)
\begin{align*}
\frac{W_{t\sqrt D,\mathfrak p_1}^{*, \psi'}(0, \phi_{\mathfrak p_1, 0})W_{t\sqrt D,\mathfrak p_2}^{*, \psi'}(0, \phi_{\mathfrak p_2, 0})}{\gamma(W_{\mathfrak p_1})\gamma(W_{\mathfrak p_2})}
&= 4 \cdot \frac{1}2 \cdot \frac{1}2 (\ord_{\mathfrak p_1} (t\sqrt D) -1)
\\
&=\rho_{\mathfrak p_t}(t\sqrt D\mathfrak p_t^{-2}).
\end{align*}

\end{proof}

Now, one has by Lemma \ref{lem:Whittaker2}  and (\ref{eqa1})
\begin{align*}
a(t, \phi_1)&=0,
\\
a(t, \phi_0)&= -4
   \sum_{\mathfrak p\ \hbox{\tiny inert in E/F}} \frac{1+\ord_\mathfrak p(t\sqrt D)}2  \rho(t \mathfrak p \partial_F \mathfrak p_t^{-2})  \log (\norm(\mathfrak p)).
\end{align*}
Here $\mathfrak p_t$ is the only prime ideal of $F$ above $2$ with $t\sqrt D \in \mathfrak p_t$.
Replacing $t$ by $t/\sqrt D$, one obtains for $t =\frac{m+\sqrt D}2 \in  \OO_F$ with  $|m| < \sqrt D$
$$
a(t/\sqrt D, \phi)= -4
   \sum_{\mathfrak p\ \hbox{\tiny inert in E/F}} \frac{1+\ord_\mathfrak p(t\OO_F)}2  \rho(t \mathfrak p  \mathfrak p_t^{-2})  \log (\norm(\mathfrak p)).
$$
The condition $\rho(t \mathfrak p  \mathfrak p_t^{-2})\neq 0$ implies $t \in \mathfrak p_t^2$ and so
$$
\norm(t) =\frac{m^2-D}4 \equiv 0 \pmod 4, \quad  i.e.\  m^2 \equiv D \pmod {16}.
$$
This proves  the second identity in Lemma \ref{lem:CalWhittaker}:
$$
a_1(\phi) =\sum_{\substack{t =\frac{m+\sqrt D}2\\ |m|< \sqrt D, \hbox{\tiny odd} \\ m^2 \equiv D \pmod {16}} }
   \sum_{\mathfrak p \hbox{ inert in } E/F} \frac{1+\ord_\mathfrak p(t\OO_F)}2  \rho(t \mathfrak p^{-1}  \mathfrak p_t^{-2}) \log (\norm(\mathfrak p)).
$$

Now we prove $a_0(\tilde\phi)=0$. The same argument as above gives
$$
\tilde\phi =\tilde\phi_2 \prod_{\mathfrak p\nmid 2} \phi_\mathfrak p,
$$
and
$$
\tilde\phi_2 = \tilde\phi_{\mathfrak p_1, 0} \tilde\phi_{\mathfrak p_2, 0} + \tilde\phi_{\mathfrak p_1, 1} \tilde\phi_{\mathfrak p_2, 1}
$$
with $\tilde\phi_{\mathfrak p_i, a} $ being $\tilde\varphi_a$ in Lemma \ref{lem5.5}.  So
$$
W_{0, 2} (s,  \tilde\phi_2) = \sum_{a=0}^1  \prod_{i=0}^1 W_{0, \mathfrak p_i}(s, \tilde{\phi}_{\mathfrak p_i, a})= 0
$$
by Lemma \ref{lem5.5}. This implies
$$
W_{0, f}(s, \tilde\phi) =0
$$
and thus $a_0(\tilde\phi)=0$ by Remark \ref{rem:Constant} (and $\tilde\phi(0)=0$). This proves Lemma \ref{lem:CalWhittaker}, and thus Theorem  \ref{theo1.3}.

\begin{remark} When $d_i \equiv 1 \pmod 8$ are not satisfied, the big CM value formula will still give a factorization formula for the CM values of $\omega_2(z_1) -\omega_2(z_2)$ although the summation will be over the ring class group of $E_i$ with conductor $2$ when $d_i \equiv 5 \pmod 8$ (see Lemma \ref{lem5.4}). We leave the details to the reader.

\end{remark}

\begin{remark} The Weber function $\omega_2$ has two companions $\omega_1(\tau)=w(\omega_2)$ and $\omega_0(\tau) =\omega_1(\tau+1)$. So the results on $\omega_2$ can easily be transferred to its companions $\omega_0$ and $\omega_1$.
\end{remark}

\bibliographystyle{alpha}
\bibliography{reference}

\end{document}